\theoremstyle{plain}
\newtheorem{Thm}{Theorem}[section]
\newtheorem{Cor}[Thm]{Corollary}
\newtheorem{Lem}[Thm]{Lemma}
\newtheorem{Prop}[Thm]{Proposition}
\newtheorem{Claim}{Claim}
\theoremstyle{definition}		
\newtheorem{Def}[Thm]{Definition}
\newtheorem{Rem}[Thm]{Remark}
\newtheorem{Ex}[Thm]{Example}
\newtheorem{qn}[Thm]{Question}
\newcommand{\ar}{\mbox{${\mathcal{R}}$}}
\newcommand{\el}{\mbox{${\mathcal{L}}$}}
\newcommand{\eh}{\mbox{${\mathcal{H}}$}}
\newcommand{\jay}{\mbox{${\mathcal{J}}$}}
\newcommand{\dee}{\mbox{${\mathcal{D}}$}}
\newcommand{\art}{\mbox{$\widetilde{\mathcal{R}}$}}
\newcommand{\elt}{\mbox{$\widetilde{\mathcal{L}}$}}
\newcommand{\eht}{\mbox{$\widetilde{\mathcal{H}}$}}
\newcommand{\arte}{\mbox{$\widetilde{\mathcal{R}}_E$}}
\newcommand{\elte}{\mbox{$\widetilde{\mathcal{L}}_E$}}
\newcommand{\ehte}{\mbox{$\widetilde{\mathcal{H}}_E$}}
\renewcommand{\phi}{\varphi}
\renewcommand{\a}{\mathbf{a}}
\renewcommand{\b}{\mathbf{b}}
\newcommand{\N}{\mathbb{N}}
\newcommand{\Z}{\mathbb{Z}}
\newcommand{\BR}{\operatorname{BR}}
\renewcommand{\max}{\operatorname{max}}
\renewcommand{\Im}{\operatorname{Im}}
\DeclareMathOperator{\ann}{\mathbf{r}}
\DeclareMathOperator{\supp}{supp}
\newcommand{\tuple}[1]{\mbox{{\boldmath{$#1$}}}}
\newenvironment{txitemize}{\begin{itemize}[leftmargin=7mm,itemsep=1mm]}{\end{itemize}}
\newenvironment{thmenumrom}{\begin{enumerate}[leftmargin=10mm,itemsep=1mm,label=\textup{(\roman*)}]}{\end{enumerate}}
\newenvironment{thmenumarab}{\begin{enumerate}[leftmargin=10mm,itemsep=1mm,label=\textup{(\arabic*)}]}{\end{enumerate}}
\newenvironment{thmenumalph}{\begin{enumerate}[leftmargin=10mm,itemsep=1mm,label=\textup{(\alph*)}]}{\end{enumerate}}
\begin{document}

\title[Coherency and constructions]{Coherency   and  constructions for monoids}

\thanks{The authors acknowledge the support of the Engineering and Physical Sciences Research Council (EPSRC) grant  EP/I032312/1.
Research also partially supported by the Hungarian Scientific Research Fund (OTKA) grant  PD115705. The research of Yang Dandan
was supported by grant  20170604 of the Young Talents Project of Shaanxi
Association for Science and Technology,  by grants  20103176174 and
 JB180714 of the Fundamental Research Funds for the Central Universities, and by grant 2020JM-178 of Shaanxi Province Basic Research Program of Natural Science.
}

\date{\today}
\keywords{monoid,  $S$-act, coherency, regular, finitary properties}

\author[Y. Dandan]{Yang Dandan}
\address{School of Mathematics and Statistics, Xidian University, Xi'an 710071, P. R. China }
\email{ddyang@xidian.edu.cn}

\author[V. Gould]{Victoria Gould}
\address{Department of Mathematics, University of York, Heslington, York, YO10 5DD, UK}
\email{victoria.gould@york.ac.uk}

\author[M. Hartmann]{Mikl\'os Hartmann}
\email{hmikklos@gmail.com}

\author[N. Ru\v{s}kuc]{Nik Ru\v{s}kuc} 
\address{School of Mathematics and Statistics, University of St Andrews, St Andrews, KY16 9SS, UK}
\email{nik.ruskuc@st-andrews.ac.uk}

\author[R-E Zenab]{Rida-E Zenab}
\address{Department of Mathematics
Sukkur IBA University, Pakistan}
\email{ridaezenab@iba-suk.edu.pk}

\subjclass[2010]{Primary: 20M10, 20M30, Secondary: 20M12, 20M17}

\begin{abstract}
A monoid $S$ is {\em right coherent} if every finitely generated subact of every finitely
presented right $S$-act is finitely presented.
This is a finiteness condition, and we investigate whether or not it is preserved under some standard algebraic and semigroup theoretic constructions: subsemigroups, homomorphic images, direct products,
Rees matrix semigroups, including Brandt semigroups, and Bruck--Reilly extensions.
We also investigate the relationship with the property of being weakly right noetherian, which requires all right ideals of $S$ to be finitely generated.
\end{abstract}

\maketitle

\section{Introduction}\label{sec:intro}

A {\em finitary property} for a class of algebraic structures $\mathcal{A}$ is a property that is certainly satisfied by all finite algebras in $\mathcal{A}$. Of course, one hopes that the property will also be satisfied by some infinite members of $\mathcal{A}$, in such a way that it forces finite-like behaviour. 
Studying algebras via their finitary properties is a classical tool, introduced by  
Noether and Artin in the early part of the last century in the context of descending and ascending chain conditions for `classical' algebras such as rings. It  has  very broad implications today in areas ranging from structure theory to decidability problems.  

In this paper we work with the class of monoids, and the finitary property is that of right coherency. We arrive at this property  by considering representations of  monoids  by mappings of sets, as we now describe. Let $S$ be a monoid with identity $1$. A  {\em right $S$-act}
is a set $A$ together with a map $A\times S\rightarrow A$ 
where $(a,s)\mapsto as$, such that
for all $a\in A$ and $s,t\in S$ we have $a1=a$ and $a(st)=(as)t$. 
A right action of $S$ on $A$ may also be viewed as a morphism from  $S$ to the monoid of all
mappings of $A$ to itself (with left-to-right composition). 
We also have the dual notion of a {\em left $S$-act}. 
Right $S$-acts over a monoid $S$  may be regarded as  the non-additive analogue of right $R$-modules over a (unital) ring $R$.
Although the study of the two notions diverges considerably once technicalities set in, one can often begin by forming analogous notions and asking corresponding questions.

A monoid $S$ is said to be {\em right coherent} if every finitely generated subact of every finitely presented right $S$-act is finitely presented.
{\em Left coherency} is defined dually; $S$ is {\em coherent} if it is both right 
and left coherent. These notions are analogous to their name-sakes for a  ring $R$  \cite{chase:1960}
(where, of course, $S$-acts are replaced by $R$-modules).
Coherency is a finitary condition for both rings and monoids.
As demonstrated by Wheeler
\cite{wheeler:1976}, it is intimately related to the model theory of  $S$-acts  and  $R$-modules. Indeed, the first order theory of right $S$-acts (respectively, right $R$-modules) has a {\em model companion}  if and only if 
$S$ (respectively, $R$), is right coherent. Having a model companion ensures that the first order theory is  well behaved, in particular, it is amenable to the application of concepts of stability  \cite{eklof:1970,gould:1987,ivanov:92}. 

This paper is the third in a recent series  \cite{FMonoid,GM} investigating coherency of monoids. 
Its predecessors were concerned with free monoids in certain varieties of monoids and unary monoids. 
In particular, \cite{FMonoid} showed that all free monoids are coherent, building upon the earlier observation in \cite{gould:1992} that free commutative monoids are coherent, and resolving an open question from that paper.  
This theme is continued in \cite{GM}, where it was shown that any free left ample monoid is coherent, while free inverse monoids and free ample monoids of rank $>1$ are not.
In this paper we change tack somewhat, and note that little is known concerning the preservation of right coherency of monoids under standard algebraic constructions. Our primary aim is to start investigations exploring this strand. This also entails establishing  connections between right coherency and  finitary properties relating to the lattice of right ideals of a monoid.

In what follows we outline the organisation and content of the  paper.
In Section \ref{sec:prelim} we introduce the background material
needed for the rest of the paper.
This in particular includes the relationship between coherency and right congruences.
In the case of rings, coherency can be characterised by properties of certain right ideals.
Although monoids have right ideals, where a right ideal $I$ of $S$ is a subset such that
$IS\subseteq I$, they do not play the same role as in the case for rings, 
because we do not obtain \emph{all} single generator right $S$-acts by factoring out by a right ideal. 
In this regard
the natural notion  corresponding  to a right ideal in a ring is that of a 
{\em right congruence} of a monoid, that is, an equivalence relation $\rho$ on  $S$ 
such that $ac\,\rho\, bc$ for all $a,b,c\in S$ with $a\,\rho\, b$. 
Right ideals give rise to right congruences, but not all right congruences are obtained in this way.
In Section \ref{sec:prelim} we will review the standard conditions expressing coherency in terms of properties of congruences, as well as all other background material needed for the rest of the paper.

In Section \ref{sec:noeth} we explore further the link between congruences and coherency, and relate
the latter with another finiteness condition, that of being \emph{weakly right noetherian}.
A  monoid $S$ has this property if every
right ideal is finitely generated, and is {\em right noetherian} if every right congruence is finitely generated. 
From Normak \cite{normak:1977}, a right noetherian monoid is right coherent. 
In the parallel situation for rings, it is well known that the notions of being right noetherian and of being weakly right noetherian coincide, and that they imply right coherency (see, for example, \cite{rotman:1979}).  
On the other hand, for  a monoid, the notion of being weakly right noetherian is easily seen to be weaker than that of being right noetherian.  For example, any group is weakly right noetherian, 
but, since (finitely generated) right congruences correspond to (finitely generated) subgroups, 
it follows that not all groups are right noetherian.  Moreover,  \cite[Example 3.1]{gould:1992} tells us that, in general, a weakly right noetherian monoid need not be right coherent. 
However, if we restrict our attention to \emph{regular} monoids $S$, meaning that for all $a\in S$ there exists $b\in S$ such that $a=aba$, then we can demonstrate a stronger link  between right ideals and coherency (Theorem \ref{prop:wrnregular}), from which we are able to deduce:

\begin{txitemize}
\item
Any weakly right noetherian regular monoid is right coherent (Corollary \ref{cor:wrncoherent}).
\end{txitemize}

It is known that  there are right coherent monoids that are not weakly right noetherian, for example, any free monoid of rank greater than one \cite{FMonoid}. 
We exhibit two further examples at the end of Section \ref{sec:noeth}, both of them regular, in order to demonstrate the independence of various conditions we consider.

In Section~\ref{sec:down} 
we show that certain natural  submonoids of right coherent monoids are right coherent. In particular:

\begin{txitemize}
\item
For any right coherent monoid $S$, any monoid ideal and any monoid $\jay$-class are right coherent (Corollary \ref{cor:ideal} and Theorem \ref{thm:jclasses}).
\end{txitemize}

In the final part of Section \ref{sec:down} and the entire Section \ref{sec:up} we focus on Brandt monoids 
$\mathcal{B}(M,I)^1$, for which we can present a complete picture:

\begin{txitemize}
\item
A monoid $M$ is right coherent if and only if any Brandt monoid $\mathcal{B}(M,I)^1$ is right coherent
(Corollary \ref{cor:btdown} and Theorem \ref{thm:brandt}).
\end{txitemize}

We also consider two related constructions -- Rees matrix semigroups and Bruck--Reilly extensions -- but for them are only able to present partial results in Proposition \ref{prop:cs1} and Corollaries \ref{cor:BRG}, \ref{cor:br}.

 Section~\ref{sec:dp} is devoted to right coherency of direct products. 
 We exhibit an example which shows that direct products do not preserve coherency in general, and
 then prove:

\begin{txitemize}
\item
If $S$ is a right coherent monoid and $T$ is a finite monoid, then $S\times T$ is right coherent
(Theorem \ref{thm:SxTfin}).
\end{txitemize}

We regard the work in this paper as an opener in the discussion of the behaviour of coherency with regard to natural semigroup constructions, and its relation to other finitary properties. As we  proceed we suggest a number of  directions for   future work. As far as possible we have attempted to make the material self contained, but refer the reader to  \cite{howie:1995} and \cite{kkm:2000} 
 for more details of the terminology.

\section{Preliminaries}\label{sec:prelim}

Let $S$ be a monoid with identity $1$. We  allow $\emptyset$ to be a subact of any $S$-act.   The monoid $S$ may be regarded as a right $S$-act over itself, and as such its subacts are exactly its right ideals, so that, in particular, we allow $\emptyset$ to be a right ideal. In the category of right $S$-acts, a morphism  between right $S$-acts $A$ and $B$ is a map
$\theta:A\rightarrow B$ such that $(as)\theta=(a\theta)s$ for all $a\in A$ and $s\in S$; we refer to $\theta$ as an {\em $S$-morphism}. Since $S$-acts form a variety of algebras, the free $S$-act $F_S(X)$  over any set $X$ exists. The following describes its  very transparent structure. 

\begin{Prop}[{\cite[{Construction 1.5.14}]{kkm:2000}}] \label{prop:free}
Let $X$ be a set and $S$ a monoid.
Then $F_S(X)=X \times S$ with the action of $S$ defined as $(x,s) \cdot t=(x,st)$.
\end{Prop}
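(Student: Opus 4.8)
The plan is to exhibit $X\times S$ with the stated action as the free right $S$-act on $X$ by checking the defining universal property directly. Recall that since right $S$-acts form a variety of algebras, a free object $F_S(X)$ on the set $X$ is characterised, up to isomorphism, by the existence of a map $\iota\colon X\to F_S(X)$ such that for every right $S$-act $A$ and every map $\phi\colon X\to A$ there is a unique $S$-morphism $\overline\phi\colon F_S(X)\to A$ with $\iota\overline\phi=\phi$. So it suffices to produce such a map $\iota$ for $X\times S$ and verify the property.

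First I would confirm that $X\times S$ with $(x,s)\cdot t=(x,st)$ really is a right $S$-act: $(x,s)\cdot 1=(x,s1)=(x,s)$, and $\bigl((x,s)\cdot t\bigr)\cdot u=(x,st)\cdot u=(x,stu)=(x,s)\cdot(tu)$ by associativity in $S$. Next I would take $\iota\colon X\to X\times S$, $x\mapsto(x,1)$; note in passing that $\iota$ is injective and that its image generates $X\times S$, since $(x,1)\cdot s=(x,s)$, so every element of $X\times S$ is of the form $(x,1)\cdot s$.

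For the universal property, given a right $S$-act $A$ and a map $\phi\colon X\to A$, I would define $\overline\phi\colon X\times S\to A$ by $\overline\phi(x,s)=(x\phi)s$. This is an $S$-morphism: $\overline\phi\bigl((x,s)\cdot t\bigr)=\overline\phi(x,st)=(x\phi)(st)=\bigl((x\phi)s\bigr)t=\overline\phi(x,s)\cdot t$. It extends $\phi$ through $\iota$, since $\overline\phi(x,1)=(x\phi)1=x\phi$. For uniqueness, if $\psi\colon X\times S\to A$ is any $S$-morphism with $\psi(x,1)=x\phi$ for all $x\in X$, then for each $(x,s)$ we get $\psi(x,s)=\psi\bigl((x,1)\cdot s\bigr)=\psi(x,1)\cdot s=(x\phi)s=\overline\phi(x,s)$, so $\psi=\overline\phi$. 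This establishes that $X\times S\cong F_S(X)$.

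I do not anticipate a genuine obstacle here: the argument is a routine verification of a universal property, and the only points requiring a little care are bookkeeping ones — writing the action and the morphism condition on the correct (right-hand) side, and making sure to use the form of the universal property appropriate to the variety of $S$-acts (factorisation of \emph{maps} out of the set $X$, not of morphisms). Alternatively, one could simply cite \cite[Construction 1.5.14]{kkm:2000} and omit the verification, as the statement is standard.
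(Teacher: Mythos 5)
Your verification is correct: the paper gives no proof of this proposition, simply citing \cite[Construction 1.5.14]{kkm:2000}, and your direct check of the universal property (the act axioms, the embedding $x\mapsto(x,1)$, the extension $\overline\phi(x,s)=(x\phi)s$, and uniqueness via $(x,s)=(x,1)\cdot s$) is exactly the standard argument behind that citation. Nothing is missing.
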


For simplicity in the above we abbreviate $(x,s)$ to $xs$ and $x1$ to $x$. 

For a right $S$-act $A$  and $a\in A$ define $aS:=\{ as:s\in S\}$.
Then $A$ is {\em finitely generated} if there exist $a_1,\dots,a_n\in A$ such that
\[A=a_1S\cup a_2S\cup\hdots \cup a_nS.\]
This is equivalent to $A$ being an  image of a finitely generated free $S$-act $F_S(X)$ where $|X|=n$, under an $S$-morphism. Thus $A$ is $S$-isomorphic to a  quotient of  $F_S(X)$, as we now explain. 

\begin{Def}\label{def:cong}
Let $A$ be a right $S$-act.
An equivalence relation $\rho$ on $A$ is a {\em right $S$-act congruence on $A$}  (\emph{congruence} for short, when there is no danger of confusion) if $a \ \rho\ b$ implies $as\ \rho\ bs$ for all $a,b \in A$ and $s\in S$.
\end{Def}

If $\rho$ is a congruence on a right $S$-act $A$, then the quotient
\[A/\rho=\{ a\rho:a\in A\}\] 
is a right $S$-act under $(a\rho)s=(as)\rho$, for any $a\in A$ and $s\in S$. 
The congruences of $S$ considered as a right $S$-act are precisely the right congruences of $S$ considered as a monoid.
For any right $S$-act $A$, and any $a\in A$, the annihilator of $a$ is
\[\ann(a):=\bigl\{ (u,v)\in S\times S:au=av\bigr\};\]
this is a right congruence on $S$ and $S/\ann(a)\cong aS$.

Generating sets for congruences
play a key role in this paper.
For a set $H$ consisting of ordered pairs we let $H^{-1}:=\bigl\{(x,y)\::\: (y,x)\in H\bigr\}$. We then let
$\overline{H}=H\cup H^{-1}$, which is the symmetric closure of $H$.
This notation will be used throughout the paper without further comment.
If $H\subseteq A\times A$, where $A$ is an $S$-act, we denote by $\langle H\rangle$
the congruence on $A$ \emph{generated} by $H$; that is the least congruence on $A$ that contains $H$.
Note that $\langle H\rangle=\langle \overline{H}\rangle$, which enables us to replace a (finite) generating set by a symmetric (finite) generating set for any congruence.
When we want to emphasise the act within which we are generating a congruence, we will write $\langle H\rangle_A$ for $\langle H\rangle$.
The following is a standard description of $\langle H\rangle$:

\begin{Lem}[{\cite[{Lemma 1.4.37}]{kkm:2000}}] \label{lem:cong}
Let $S$ be a monoid, $A$ an $S$-act, and $H \subseteq S \times S$. 
Then  for $a,b\in A$ we have $(a, b)\in\langle H\rangle$ if and only if $a=b$ or there exist $(c_1,d_1),\ldots,(c_n,d_n)\in \overline{H}$ and $t_1,\ldots, t_n \in S$ such that
\begin{equation}\label{Hseq}
a=c_1t_1,d_1t_1=c_2t_2,\ldots,d_nt_n=b.
\end{equation}
\end{Lem}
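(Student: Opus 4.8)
The plan is the standard two-inclusions argument. Let $\sigma$ denote the relation on $A$ defined by the right-hand side of the statement: $(a,b)\in\sigma$ if and only if $a=b$, or there exist $(c_1,d_1),\dots,(c_n,d_n)\in\overline{H}$ and $t_1,\dots,t_n\in S$ with $a=c_1t_1$, with $d_it_i=c_{i+1}t_{i+1}$ for $1\le i<n$, and with $d_nt_n=b$. First I would show that $\sigma$ is a congruence on $A$ containing $H$, whence $\langle H\rangle\subseteq\sigma$ by minimality of $\langle H\rangle$; then I would show $\sigma\subseteq\langle H\rangle$; together the two inclusions give $\langle H\rangle=\sigma$, which is the assertion.

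That $\sigma$ is a congruence is a routine verification. Reflexivity is exactly the clause $a=b$. For symmetry, one reverses a connecting sequence: if $a=c_1t_1,\ d_1t_1=c_2t_2,\ \dots,\ d_nt_n=b$, then, since $\overline{H}$ is symmetric and hence $(d_i,c_i)\in\overline{H}$ for each $i$, the reversed chain $b=d_nt_n,\ c_nt_n=d_{n-1}t_{n-1},\ \dots,\ c_1t_1=a$ witnesses $(b,a)\in\sigma$. Transitivity follows by concatenating the witnessing sequences for $(a,b)$ and $(b,c)$, treating the degenerate cases $a=b$ or $b=c$ separately. For compatibility with the $S$-action, if $a=c_1t_1,\dots,d_nt_n=b$ is a witness for $(a,b)$ and $s\in S$, then $as=c_1(t_1s),\ d_i(t_is)=c_{i+1}(t_{i+1}s),\ d_n(t_ns)=bs$ is a witness for $(as,bs)$, the equalities being preserved because $(xt)s=x(ts)$ by the act axiom. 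Finally $H\subseteq\overline{H}\subseteq\sigma$: given $(c,d)\in H$, take $n=1$ and $t_1=1$, so that $c=c\cdot1$ and $d\cdot1=d$.

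For the reverse inclusion $\sigma\subseteq\langle H\rangle$, recall that $\langle H\rangle=\langle\overline{H}\rangle$ and so $\overline{H}\subseteq\langle H\rangle$; hence each $(c_i,d_i)\in\overline{H}$ lies in $\langle H\rangle$, and then $(c_it_i,d_it_i)\in\langle H\rangle$ because $\langle H\rangle$ is compatible with the action. Reading off the sequence \eqref{Hseq} then gives $a=c_1t_1\mathrel{\langle H\rangle}d_1t_1=c_2t_2\mathrel{\langle H\rangle}\cdots\mathrel{\langle H\rangle}d_nt_n=b$, so $(a,b)\in\langle H\rangle$ by transitivity of $\langle H\rangle$; and the case $a=b$ holds since $\langle H\rangle$ is reflexive. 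This completes the two inclusions.

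There is no serious obstacle here. The one point that needs a little care is the symmetry of $\sigma$: it is precisely what forces the passage from $H$ to its symmetric closure $\overline{H}$ in the statement, so that a reversed connecting chain is again assembled from pairs in the permitted set, and one must keep the indexing straight when reversing. Everything else is bookkeeping with the associativity law $a(st)=(as)t$ of the act.
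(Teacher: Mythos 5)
Your proof is correct: the two-inclusions argument (verifying that the sequence relation is a right congruence on $A$ containing $H$, hence containing $\langle H\rangle$ by minimality, and conversely that every $\overline{H}$-sequence stays inside $\langle H\rangle$ by compatibility with the action and transitivity) is the standard one, and your handling of symmetry via reversal of the chain and of the base case $t_1=1$ is exactly right. The paper gives no proof of its own here --- it cites the result as Lemma 1.4.37 of Kilp--Knauer--Mikhalev --- so there is nothing further to compare; your argument is the expected one.
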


\begin{Def}
A sequence of the form \eqref{Hseq} is called  an \emph{$\overline{H}$-sequence} 
connecting $a$ and $b$. 
\end{Def}

\begin{Def} A right $S$-act $A$ over a monoid $S$ is {\em finitely presented} if it is isomorphic to $F_S(X)/\rho$ for some finite set $X$ and finitely generated congruence $\rho$.
\end{Def}

It is well-known from universal algebra that the notion of finite presentability does not depend on the choice of finite generating set; see \cite[Section 1.5 and Theorem 2.5.5]{burris}.
 In particular, any finitely presented monogenic (single-generator) $S$-act is of the form $S/\rho$ where $\rho$ is a finitely generated right congruence on $S$. 
Having set up all the necessary notations, we now give some equivalent characterisations of right coherent monoids that will be used interchangeably throughout the paper:

\begin{Thm}[{\cite{gould:1987,gould:1992}}]\label{thm:chasetype} 
The following are equivalent for a monoid $S$:
\begin{thmenumarab}
\item \label{it:coh1}
$S$ is right coherent;
\item \label{it:coh2}
for any finitely generated right congruence $\rho$ on $S$ and any elements $a,b \in S$:
\begin{thmenumrom}
\item \label{it:coh2i}
the subact $(a\rho) S \cap (b\rho) S$ of the right $S$-act $S/\rho$ is  finitely generated;
\item \label{it:coh2ii}
the annihilator 
\[
\ann(a\rho)=\bigl\{(u,v): au \ \rho\ av\bigr\}
\]
is a finitely generated right congruence on $S$;
\end{thmenumrom}
\item \label{it:coh3}
for any finite $X$ and finitely generated right congruence $\rho$ on $F_S(X)$ and any  $a,b \in F_S(X)$: 
\begin{thmenumrom}
\item  \label{it:coh3i}
the subact $(a\rho) S \cap (b\rho) S$ of the right $S$-act $F_S(X)/\rho$ is  finitely generated;
\item \label{it:coh3ii}
the annihilator
\[
\ann(a\rho)=\bigl\{(u,v): au \ \rho\ av\bigr\}
\]
is a finitely generated right congruence on $S$.
\end{thmenumrom}
\end{thmenumarab}
\end{Thm}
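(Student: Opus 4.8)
The plan is to establish $(1)\Leftrightarrow(3)$ and $(2)\Leftrightarrow(3)$; since (2) is the case of (3) in which $X$ has a single element (so that $F_S(X)\cong S$), only $(1)\Rightarrow(3)$, $(3)\Rightarrow(1)$ and $(2)\Rightarrow(3)$ require argument. Two gluing facts about subacts of an arbitrary right $S$-act $A$, both proved by routine manipulation of $\overline{H}$-sequences in the style of Lemma~\ref{lem:cong}, will carry most of the weight:
\begin{txitemize}
\item[(A)] if $P$ and $Q$ are finitely presented subacts of $A$ and $P\cap Q$ is finitely generated, then $P\cup Q$ is finitely presented;
\item[(B)] if $A$ is finitely presented and $A=P\cup Q$ with $P$ and $Q$ finitely generated subacts, then $P\cap Q$ is finitely generated.
\end{txitemize}
For (A) one presents $P\cong F_S(Y)/\alpha$ and $Q\cong F_S(Z)/\beta$ with $\alpha,\beta$ finitely generated, lifts generators $c_1,\dots,c_k$ of $P\cap Q$ to $p_l\in F_S(Y)$ and $q_l\in F_S(Z)$, and shows that $F_S(Y\sqcup Z)\to P\cup Q$ has kernel $\langle\alpha\cup\beta\cup\{(p_l,q_l):1\le l\le k\}\rangle$. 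For (B) one takes a presentation $F_S(Y\sqcup Z)/\langle\gamma_0\rangle$ of $A$, with $\gamma_0$ finite and symmetric, in which $Y$ maps onto a generating set of $P$ and $Z$ onto one of $Q$; for $z\in P\cap Q$, an $\overline{\gamma_0}$-sequence joining a preimage of $z$ in $F_S(Y)$ to one in $F_S(Z)$ must, at its first crossing into the $F_S(Z)$-part, use a pair $(c,d)\in\gamma_0$ with $c\in F_S(Y)$ and $d\in F_S(Z)$, and then $z$ lies in the monogenic subact of $A$ generated by the image of $c$ --- so $P\cap Q$ is generated by the images of those finitely many $c$.

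Granting (A) and (B), I would prove $(3)\Rightarrow(1)$ in the local form that \emph{if a finitely presented act $A=F_S(X)/\rho$ satisfies conditions (i) and (ii) of (3), then every finitely generated subact of $A$ is finitely presented}: for $B=b_1S\cup\dots\cup b_mS\subseteq A$ induct on $m$; for $m=1$, writing $b_1=a_1\rho$ gives $b_1S\cong S/\ann(a_1\rho)$, finitely presented since $\ann(a_1\rho)$ is finitely generated by (ii); for $m>1$, $B'=b_1S\cup\dots\cup b_{m-1}S$ is finitely presented by induction, $b_mS$ by the base case, and $B'\cap b_mS=\bigcup_{j<m}(b_jS\cap b_mS)$ is finitely generated by (i), so $B$ is finitely presented by (A). For $(1)\Rightarrow(3)$, given $A=F_S(X)/\rho$ and $a,b\in F_S(X)$, the subacts $(a\rho)S$, $(b\rho)S$ and $(a\rho)S\cup(b\rho)S$ are finitely generated, hence finitely presented by coherency; then $(a\rho)S\cong S/\ann(a\rho)$ gives (ii), and (B) applied to $(a\rho)S\cup(b\rho)S$ gives (i).

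The implication $(2)\Rightarrow(3)$ is where the real work lies: the hypotheses speak only of the monogenic quotients $S/\sigma$ and must be bootstrapped to arbitrary finitely presented acts. Here the plan is induction on the size of a finite generating set $\rho_0$ of the congruence $\rho$ on $F_S(X)$ (with $X$ fixed but arbitrary). If $\rho_0=\emptyset$ then $A=F_S(X)$ is free, and for $a=xs$ ($x\in X$, $s\in S$) one has $\ann(a)=\{(u,v):su=sv\}$, the annihilator of $s$ with respect to the trivial right congruence, finitely generated by (ii) of (2), while each non-empty $(xs)S\cap(xs')S\cong sS\cap s'S$ is finitely generated by (i) of (2). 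For the inductive step, delete a pair $(c,d)$ from $\rho_0$ to obtain $\rho_1$; by the inductive hypothesis (i) and (ii) hold for $A_1=F_S(X)/\rho_1$, whence, by the local form of $(3)\Rightarrow(1)$, every finitely generated subact of $A_1$ is finitely presented. Now $A=A_1/\tau$ with $\tau$ the congruence on $A_1$ generated by the single pair $(\xi,\eta)$, $\xi=c\rho_1$, $\eta=d\rho_1$, and it remains to see that (i) and (ii) survive this one further identification $\xi\sim\eta$.

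This last point is the main obstacle. For a monogenic subact $aS$ of $A_1$, Lemma~\ref{lem:cong} applied to $\tau=\langle\{(\xi,\eta)\}\rangle$ shows that every $\overline{\{(\xi,\eta)\}}$-sequence has all of its waypoints in $\xi S\cup\eta S$, so such a sequence can meet $aS$ only inside the finitely generated subacts $aS\cap\xi S$ and $aS\cap\eta S$. Using this, together with the finiteness of $\xi S\cap\eta S$ and a finite generating set of the (already finitely generated) annihilator of $a$ in $A_1$, one should be able to build a finite generating set for $\ann(a\tau)$, giving (ii) for $A$; a similar but more delicate analysis, now keeping track of the finitely many points at which such a sequence can re-enter $aS$ or $bS$, should give a finite generating set for $(a\tau)S\cap(b\tau)S$, giving (i) for $A$. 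The real difficulty is to make this bookkeeping watertight --- above all, to prove that finitely many such re-entry points genuinely suffice --- and it is exactly here that one exploits that in $A_1$ all pairwise intersections of the monogenic subacts $aS$, $bS$, $\xi S$ and $\eta S$ are already finitely generated. The rest is careful but routine work with $\overline{H}$-sequences.
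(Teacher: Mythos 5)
First, a remark on the comparison itself: the paper does not prove this theorem but quotes it, attributing $(1)\Leftrightarrow(2)$ to \cite[Corollary 3.4]{gould:1992} and $(2)\Leftrightarrow(3)$ to \cite[Theorem 6]{gould:1987} and \cite[Proposition 3.2]{gould:1992}; so there is no internal argument to measure yours against. Your preparatory material is essentially correct: the gluing facts (A) and (B) are true and your sketches of them work, the local form of $(3)\Rightarrow(1)$ and the deduction of $(1)\Rightarrow(3)$ are sound, and $(3)\Rightarrow(2)$ is indeed immediate since $F_S(\{x\})\cong S$.

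The genuine gap is in the inductive step of $(2)\Rightarrow(3)$, exactly at the point you describe as ``careful but routine work with $\overline{H}$-sequences''. It is not routine from the ingredients you list. Writing $A=A_1/\tau$ with $\tau=\langle(\xi,\eta)\rangle$, you rightly note that every nontrivial $\tau$-class lies in $\xi S\cup\eta S$ and that $aS$ meets this set in $(aS\cap\xi S)\cup(aS\cap\eta S)=\bigcup_k g_kS\cup\bigcup_l h_lS$, a finitely generated subact of $A_1$. But a $\tau$-sequence joining $au$ to $av$ wanders through $\xi S\cup\eta S$ and need not stay inside $aS$; to convert it into a sequence in $S\times S$ witnessing $(u,v)\in\langle Y\rangle$ for a finite $Y$ you must contract the excursions outside $aS$, and the data you invoke (finite generation of the pairwise intersections of $aS,bS,\xi S,\eta S$ in $A_1$, and of $\ann_{A_1}(a)$) does not accomplish this -- there is no bound on how far, or for how long, such a sequence leaves $aS$ before re-entering. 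The missing idea is an auxiliary finitely generated right congruence on $S$ itself: since $\xi S\cup\eta S$ is a $2$-generated subact of $A_1$, it is finitely presented (by your local $(3)\Rightarrow(1)$ applied to $A_1$), say as $F_S(\{y,z\})/\gamma$ with $y\mapsto\xi$, $z\mapsto\eta$; collapsing $y$ and $z$ yields a finitely generated right congruence $\kappa=\{(u,v):\xi u\ \tau\ \xi v\}$ on $S$ with $(\xi S\cup\eta S)/\tau\cong S/\kappa$. Writing $g_k=\xi p_k$ and $h_l=\eta q_l$, the condition $au\ \tau\ av$ translates into $p_kw\ \kappa\ p_{k'}w'$ (and its $q$-variants), and it is hypothesis (2) applied to $\kappa$ and the elements $p_k,q_l$ -- finite generation of $\ann(p_k\kappa)$ and of $(p_k\kappa)S\cap(p_{k'}\kappa)S$ -- that produces the finitely many ``re-entry points'' and closes the bookkeeping, for (ii) and likewise for (i). Note that $\ann(p_k\kappa)=\bigl\{(w,w'):g_kw\ \tau\ g_kw'\bigr\}$ is an annihilator in $A$, not in $A_1$, so it cannot be extracted from your inductive hypothesis; it must come from condition (2) on $S$. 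With $\kappa$ introduced your plan does go through, but as written the inductive step is not established.
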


The equivalence of (1) and (2) in Theorem~\ref{thm:chasetype} is  \cite[Corollary 3.4]{gould:1992}; that of (2) and (3) follows from \cite[Theorem 6]{gould:1987} and
\cite[Proposition 3.2]{gould:1992}.

Let us briefly review some basic concepts of semigroup theory which will be used throughout.
For a monoid $S$ we will denote by $E(S)$ the set of all idempotents of $S$.
There is a natural partial order on $E(S)$ defined by $e\leq f$ if and only if $ef=fe=e$.
Green's relation $\ar$ is defined as follows:
for $a,b\in S$ we have $a\,\ar\, b$ if and only if $aS=bS$, or, equivalently, $a=bs$, $b=at$ for some $s,t\in S$. 
The relation $\el$ is defined dually,
 $\eh=\ar\cap\el$ and $\dee=\ar\circ\el=\el\circ \ar$. Finally, $a\,\jay\,b$ if and only if $SaS=SbS$.
A monoid is regular if and only if every $\ar$-class contains an idempotent, or equivalently, every $\el$-class contains an idempotent. If there is a {\em unique} idempotent in each $\ar$-class and in each $\el$-class then the monoid is {\em inverse}. 

Associated to $\jay$ is a preorder $\leq_{{\mathcal{J}}}$
defined by $a\leq_{\mathcal{J}}b$ if and only if $SaS\subseteq SbS$.
Clearly, $a\,\jay\, b$ if and only if $a\leq_{{\mathcal{J}}}b$ and $b\leq_{{\mathcal{J}}}a$.
More details about Green's relations and of other semigroup notions may be found in \cite{howie:1995}.

In this paper we will consider some standard semigroup constructions, built over a monoid $M$. 
For convenience we  now briefly introduce them; further details may be found in \cite{howie:1995}. 

Let $M$ be a monoid, $I,\Lambda$ non-empty sets, and $P=(p_{\lambda i})$ a $\Lambda\times I$ matrix with entries from $M$.
Then the \emph{Rees matrix semigroup} $\mathcal{M}(M;I,\Lambda;P)$
{over $M$
 is the set
$I\times M\times \Lambda$
with multiplication 
\[(i,a,\lambda)(j,b,\mu)=(i,ap_{\lambda j} b,\mu).\]
When}
 $M$ is a group $\mathcal{M}(M;I,\Lambda;P)$ is a completely simple semigroup 
(i.e. a semigroup with no proper two-sided ideals possessing minimal left and right ideals),
and all such semigroups arise in this way; see \cite[Theorem 3.3.1]{howie:1995}.

This construction can be generalised to  a \emph{Rees matrix semigroup with zero}
$\mathcal{M}^0(M;I,\Lambda;P)$ as follows: this time the entries of $P$ come from $M\cup\{0\}$, where $0$ is a new symbol not in $P$. The set of elements of $\mathcal{M}^0(M;I,\Lambda;P)$ is
$(I\times M\times \Lambda)\cup\{0\}$, and the multiplication is given by:
\begin{align*}
&(i,a,\lambda)(j,b,\mu)=
\begin{cases}
(i,ap_{\lambda j} b,\mu) & \text{if } p_{\lambda j}\in M,\\
0 & \text{if } p_{\lambda j}=0,
\end{cases}
\\
&
0(i,a,\lambda)=(i,a,\lambda)0=0\cdot 0=0.
\end{align*}
Rees matrix semigroups with $0$ over a group with the additional condition that every row and every column of $P$ contain at least one entry over $M$ are precisely completely $0$-simple semigroups, i.e. semigroups with $0$, with no proper non-zero ideals, and possessing minimal non-zero left and right ideals.

A special case of the latter construction is obtained by taking $I=\Lambda$, and $P$ the $I\times I$ identity matrix. This yields a \emph{Brandt semigroup} $\mathcal{B}(M;I)$.
Here the {the set of elements is}
$(I\times M\times I)\cup \{ 0\}$
and the multiplication is given by
\begin{align*}
&(i,a,j)(j,b,\ell)=
\begin{cases}
(i,ab,\ell) & \text{if } j=k,\\
0 & \text{if } j\neq k,
\end{cases}
\\
&
0(i,a,j)=(i,a,j)0=0\cdot 0=0.
\end{align*}
Brandt semigroups over groups are precisely completely $0$-simple inverse semigroups.

Rees matrix semigroups, or Brandt semigroups, are not (except in degenerate cases) monoids; if we adjoin an identity we refer to them as {\em Rees matrix monoids}  or {\em Brandt monoids}, respectively. 

The final construction we introduce here is that of a  \emph{Bruck--Reilly extension $\BR(M,\theta)$  of a monoid $M$}.
Here, $M$ is any monoid, $\theta:M\rightarrow M$ an endomorphism,
the {set of elements is} 
 $\N_0\times M\times \N_0$, where $\N_0$ denotes the natural numbers {\em including} zero,
and multiplication is given by
\[(a,g,b)(c,h,d)=(a-b+t, (g\theta^{t-b})(h\theta^{t-c}),d-c+t)
\quad \text{where}\quad t=\max\{ b,c\}. 
\]
Special instances of this construction were introduced by Bruck \cite{bruck:1958} and Reilly \cite{reilly:1966}, after whom it is now named. In the case where the image of $\theta$ is contained in the group of units of $M$, properties such as regularity pass between $M$ and $\BR(M,\theta)$; making such a restriction here does not advance our results and so we do not impose it.

Bruck--Reilly {extensions of groups} are precisely bisimple inverse $\omega$-semigroups,
i.e. inverse semigroups with a single $\dee$-class in which the natural order on idempotents is isomorphic to the inversely ordered chain $\omega$, i.e. $0>1>2>\dots$.

\section{Coherency and noetherian properties}
\label{sec:noeth}

We have commented that, in general, a weakly right noetherian monoid need not be right coherent. 
However, this implication \emph{does} hold for regular monoids. In fact, we can prove a more general result, building on
\cite[Theorem 5.2]{gould:2007},  where Gould showed that any regular  monoid in which every right ideal is principal is right coherent. 

We make repeated use of the following straightforward observation.

\begin{Lem}\label{lem:annihilators}\cite[Lemma 5.1]{gould:2007}
Let $I$ be a right ideal 
of a monoid $S$, let $\rho$ be a right congruence on $S$ and let $x,y\in S$.

\begin{thmenumrom}
\item \label{it:Ii}
The set
\[I\rho=\{ s\in S:s\,\rho\, a\mbox{ for some }a\in I\}=\bigcup
\{ a\rho:a\in I\}\]
is a right ideal of $S$ containing $I$. 
\item \label{it:Iii}
The set
\[(I,x)=\{ t\in S:xt\in I\}\]
is  a right ideal of $S$.
\item \label{it:Iiii}
If $x\,\rho\, y$ then $(I\rho,x)=(I\rho,y)$.
\end{thmenumrom}
\end{Lem}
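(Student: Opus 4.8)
The plan is to verify each of the three statements directly from the definitions of right ideal and right congruence; all three are routine closure computations, so I would keep the write-up short.

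For part \ref{it:Ii}, I would first observe that the two displayed descriptions of $I\rho$ coincide, since ``$s\,\rho\,a$ for some $a\in I$'' says exactly that $s\in a\rho$ for some $a\in I$. The inclusion $I\subseteq I\rho$ is immediate from reflexivity of $\rho$. To see that $I\rho$ is a right ideal, take $s\in I\rho$ and $t\in S$, choose $a\in I$ with $s\,\rho\,a$, and note $st\,\rho\,at$ because $\rho$ is a right congruence, while $at\in I$ because $I$ is a right ideal; hence $st\in I\rho$. This argument also records the fact, reused below, that $I\rho$ is a union of $\rho$-classes, i.e. it is $\rho$-saturated.

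For part \ref{it:Iii}, given $t\in (I,x)$ and $s\in S$ we have $xt\in I$, hence $x(ts)=(xt)s\in I$ since $I$ is a right ideal, so $ts\in(I,x)$; thus $(I,x)$ is a right ideal (possibly empty, which is allowed under the conventions of Section~\ref{sec:prelim}). For part \ref{it:Iiii}, apply part \ref{it:Ii} to the right ideal $I\rho$, which is $\rho$-saturated. Assuming $x\,\rho\,y$ and taking any $t\in S$, we get $xt\,\rho\,yt$, so by saturation $xt\in I\rho$ if and only if $yt\in I\rho$; since $(I\rho,x)=\{t:xt\in I\rho\}$ and $(I\rho,y)=\{t:yt\in I\rho\}$, the two sets are equal.

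There is no genuine obstacle here: the only point needing a little care is keeping the convention that $\emptyset$ counts as a right ideal, so that parts \ref{it:Iii} and \ref{it:Iiii} require no non-emptiness hypothesis; and the only mildly substantive ingredient, used in part \ref{it:Iiii}, is the $\rho$-saturation of $I\rho$, which already falls out of the proof of part \ref{it:Ii}.
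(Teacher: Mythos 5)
Your proof is correct: all three parts are the routine closure verifications, and the key point for part (iii) — that $I\rho$ is $\rho$-saturated — is rightly identified and follows from the description of $I\rho$ as a union of $\rho$-classes. The paper itself gives no proof (it simply cites Lemma 5.1 of \cite{gould:2007}), and your argument is exactly the intended one.
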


The right ideal $I\rho$ defined in \ref{it:Ii} above is known as the {\em $\rho$-closure} of  $I$. In the following result, the reader should keep in mind the following notational subtlety: for an element $a\in S$, 
$(aS)\rho$ is a right ideal of $S$, whereas $(a\rho)S$ is the monogenic subact of $S/\rho$ generated by $a\rho$.

\begin{Thm}\label{prop:wrnregular}
If $S$ is a regular monoid in which for every finitely generated right congruence $\rho$ and every $a,b,x,y \in S$ the right ideals $(aS)\rho \cap (bS)\rho$ and $(aS,x) \cap (bS,y)$ are  finitely generated,
then $S$ is right coherent.
\end{Thm}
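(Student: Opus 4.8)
The plan is to verify the two conditions in Theorem~\ref{thm:chasetype}\ref{it:coh2}, namely that for a finitely generated right congruence $\rho$ on $S$ and elements $a,b\in S$, both $(a\rho)S\cap(b\rho)S$ is a finitely generated subact of $S/\rho$ and $\ann(a\rho)$ is a finitely generated right congruence. The key device, exploiting regularity, is that for any $a\in S$ we may pick $a'$ with $aa'a=a$, so that $e=aa'$ is an idempotent with $ea=a$ and $aS=eS$; thus the monogenic subact $(a\rho)S$ is controlled by the right ideal $aS=eS$ and its $\rho$-closure. The bridge between the subact language of $S/\rho$ and the right-ideal language in the hypothesis will be Lemma~\ref{lem:annihilators}: the $\rho$-closure $(aS)\rho$ collects exactly those $s\in S$ with $s\,\rho\,(\text{something in }aS)$, and the ideals $(aS,x)$ record which $t$ push $x$ into $aS$.

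First I would handle condition \ref{it:coh2i}. One shows that $(a\rho)S\cap(b\rho)S$ is, up to the quotient map, governed by the right ideal $(aS)\rho\cap(bS)\rho$: if $c\rho$ lies in the intersection then $c\,\rho\,as$ and $c\,\rho\,bt$ for some $s,t$, so $c\in(aS)\rho\cap(bS)\rho$, and conversely using the idempotents $e=aa'$, $f=bb'$ one recovers generators of the subact from generators of this right ideal (replacing a generator $d$ by $d$ itself, noting $d\,\rho\,ed'\ldots$ appropriately). Since by hypothesis $(aS)\rho\cap(bS)\rho$ is finitely generated as a right ideal, a finite generating set $d_1,\dots,d_m$ yields the finitely many generators $d_1\rho,\dots,d_m\rho$ for the subact. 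This step is essentially bookkeeping once the regularity trick is in place.

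The substantive step is \ref{it:coh2ii}, finite generation of $\ann(a\rho)=\{(u,v):au\,\rho\,av\}$. Here I would argue that $(u,v)\in\ann(a\rho)$ can be analysed by tracking an $\overline{H}$-sequence (Lemma~\ref{lem:cong}) from $au$ to $av$, where $H$ is a finite symmetric generating set for $\rho$. Writing $e=aa'$, one reduces to understanding, for each generating pair $(c_i,d_i)\in\overline H$, the right ideal $(eS,c_i)$ of elements $r$ with $c_ir\in eS$ — or more precisely the $\rho$-closed versions, controlled by the ideals $(aS,x)\cap(bS,y)$ appearing in the hypothesis; their finite generation lets one "finitely localise" where in $S$ each transition of the sequence can occur, and from a finite set of such data one assembles a finite generating set for $\ann(a\rho)$. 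I expect this to be the main obstacle: converting the a priori unbounded $\overline H$-sequence witnessing $au\,\rho\,av$ into a statement about finitely many right ideals, and then back into finitely many generating pairs for the annihilator congruence, requires care about how the idempotent $e$ interacts with the congruence classes and with the closures $(cS)\rho$. Once both \ref{it:coh2i} and \ref{it:coh2ii} are established, right coherency follows immediately from Theorem~\ref{thm:chasetype}.
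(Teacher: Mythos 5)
Your overall strategy is the paper's: verify the two conditions of Theorem~\ref{thm:chasetype}\ref{it:coh2}, dispose of \ref{it:coh2i} via the surjective $S$-morphism $(aS)\rho\cap(bS)\rho\to(a\rho)S\cap(b\rho)S$, $x\mapsto x\rho$, and attack $\ann(a\rho)$ by tracking $\overline H$-sequences. Part \ref{it:coh2i} is complete and correct as you describe it. But for part \ref{it:coh2ii} you have only named the difficulty, not resolved it: the passage ``from a finite set of such data one assembles a finite generating set for $\ann(a\rho)$'' is exactly the content of the theorem, and your proposal stops there. That is a genuine gap, not a routine verification.

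Two specific points where your sketch would need to change to close it. First, the single idempotent $e=aa'$ with $eS=aS$ is not the right object: an $\overline H$-sequence witnessing $au\,\rho\,av$ wanders through the $\rho$-closure $(aS)\rho$, not through $aS$ itself. You must instead apply the hypothesis with $a=b$ to get $(aS)\rho$ finitely generated, hence (by regularity) $(aS)\rho=\bigcup_{e\in K}eS$ for a \emph{finite set $K$ of idempotents}, and track which element of $K$ covers each intermediate term of the sequence. Second, you need a device to convert membership in these right ideals back into pairs lying in $\ann(a\rho)$: for each $e\in K$ choose $z_e$ with $e\,\rho\,az_e$ (possible since $e\in(aS)\rho$). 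The finite generating set then consists of pairs of the form $(z_pxh,z_qyh)$ with $(x,y)$ ranging over the finite generating set of $\rho$, $p,q\in K$, and $h$ ranging over idempotent generators of the ideals $(pS,x)\cap(qS,y)$ — which is where the second hypothesis enters — together with one pair $(1,z_fa)$ where $a\in fS$, $f\in K$, needed to attach the endpoints $u$ and $v$ to the chain. The verification that this set generates $\ann(a\rho)$ proceeds by an induction along the sequence from $au$ to $av$: at each step $c_jt_j\to d_jt_j$ one shows that if $pc_jt_j=c_jt_j$ for some $p\in K$ then there is $q\in K$ with $z_pc_jt_j$ related to $z_qd_jt_j$ modulo the generated congruence and $qd_jt_j=d_jt_j$. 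None of this is automatic from the ingredients you list, and without it the proof is not done.
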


\begin{proof}
Let
$\rho$  be a finitely generated right congruence on $S$, where $S$ satisfies the conditions of the hypothesis. We may assume that
$\rho=\langle X\rangle$  for some finite symmetric subset $X\subseteq S\times S$.

We show that the conditions of Theorem~\ref{thm:chasetype}\ref{it:coh2} hold. To see that \ref{it:coh2}\ref{it:coh2i}  holds, let 
 $a,b \in S$.
Note that the map $\iota \colon (aS)\rho \cap (bS)\rho \to (a\rho) S \cap (b\rho) S$
such that $ x \mapsto x\rho$  is 
a surjective morphism between right $S$-acts.
Since the right ideal $(aS)\rho \cap (bS)\rho$ is finitely generated by assumption, so also is its image $(a\rho) S \cap (b\rho) S$.

Still with $a\in S$ we now show that the right congruence $\ann(a\rho)$ is  finitely generated, so that
the condition from Theorem~\ref{thm:chasetype}\ref{it:coh2}\ref{it:coh2ii} holds. We proceed by identifying  a  finite subset $Y$ of $\ann(a\rho)$, and then by showing that $Y$ is indeed a generating set.

By assumption the right ideal $(aS)\rho$ is finitely generated, so that, as $S$ is regular, 
\[(aS)\rho=\bigcup_{e\in K}eS\]
for some finite non-empty set $K\subseteq E(S)$.  For each $e\in K$ choose
$z_e\in S$ with $e\,\rho\, az_e$. Since $a\in (aS)\rho$ so we may choose and fix $f\in K$ such that  $a\in fS$, so that $a=fa\,\rho\, az_fa$, giving that
$(1,z_fa)\in \ann(a\rho)$.

For $p,q\in K$ and $x,y\in S$ we have by assumption that
$(pS,x)\cap (qS,y)$ is a finitely generated right ideal. Hence there exists a finite
(possibly empty) set $L(p,q,x,y)\subseteq E(S)$ with
\[
(pS,x)\cap (qS,y)=\bigcup_{h\in L(p,q,x,y)} hS.
\]
Now let
\begin{multline*}
Y:= \bigl\{ (1,z_fa)\bigr\}\cup \bigl\{ (z_pxh,z_qyh)\in \ann(a\rho) \::\: p,q\in K,\ (x,y)\in X,\\ 
h\in L(p,p,x,x)\cup L(p,q,x,y)\bigr\}.
\end{multline*}
The set $Y$ is finite and $Y\subseteq \ann(a\rho)$.  Let $\tau=\langle Y\rangle$ so that 
 $\tau\subseteq \ann(a\rho)$;   we need to prove the reverse inclusion.
To this end we state and verify two claims.

\begin{Claim}
\label{cl:nr1}
Suppose $(c,d)\in X$, $p\in K$ and $t\in S$ with $pct=ct$.
Then there exists $q\in K$ such that $(z_pct,z_qdt)\in\tau$ and $qdt=dt$.
\end{Claim}

\begin{proof}
From $pct=ct$ we have $t\in (pS,c)$, and hence there exists
$h\in L(p,p,c,c)$ such that $ht=t$.
Also we have $pch=ch$, and so
\begin{equation}
\label{eq:nr1}
az_pch \ \rho\ pch=ch\ \rho\ dh.
\end{equation}
It follows that $dh\in (aS)\rho$, and so there exists $q\in K$ with $qdh=dh$. Now
\begin{equation}
\label{eq:nr2}
dh=qdh\ \rho\ az_qdh.
\end{equation}
Together with \eqref{eq:nr1} this yields $(az_pch,az_qdh)\in\rho$ and so
$(z_pch,z_qdh)\in \ann(a\rho)$, giving 
$(z_pch,z_qdh)\in Y$ by definition. 
Now, to verify the claim, we have
\[
z_pct=z_pcht\ \tau\ z_q dht=z_qdt
\quad \text{and}\quad
qdt=qdht=dht=dt,
\]
as required.
\end{proof}

\begin{Claim}
\label{cl:nr2}
Suppose $(c,d)\in X$, $p\in K$ and $t,w\in S$. 
If $pct=ct$ and $dt=aw$ then
$(z_pct,w)\in\tau$.
\end{Claim}

\begin{proof}
Note that $fdt=faw=aw=dt$, implying
$t\in (pS,c)\cap (fS,d)$,
and hence $ht=t$ for some $h\in L(p,f,c,d)$.
Now, 
\[
az_p ch\ \rho\ pch=ch\ \rho\ dh=fdh\ \rho\ az_fdh,
\]
implying $(z_pch,z_fdh)\in \ann(a\rho)$, and hence
$(z_pch,z_fdh)\in Y\subseteq \tau$. But then
\[
z_pct=z_pcht\ \tau\ z_fdht=z_fdt=z_faw\ \tau\ w,
\]
as claimed.
\end{proof}

Returning to the main proof, suppose $(u,v)\in \ann(a\rho)$, i.e. $(au,av)\in\rho$.
Then either $au=av$, or else there is a sequence
\begin{equation}
\label{eq:nr3}
au=c_1t_1,d_1t_1=c_2t_2,\dots, d_nt_n=av,
\end{equation}
where for $1\leq j\leq n$ we have $(c_j,d_j)\in X$ and $t_j\in S$.
In the first case we have
\[
u=1u\ \tau\ z_fau=z_fav\ \tau\ 1v=v.
\]

Consider now the second case. 
First note that $fc_1t_1=fau=au=c_1t_1$. Thus, repeatedly applying Claim \ref{cl:nr1} along the sequence
\eqref{eq:nr3}, yields idempotents $f=p_0,p_1,\dots, p_n\in K$ such that
\begin{equation}
\label{eq:nr4}
z_{p_0}c_1t_1\ \tau\ z_{p_1}d_1t_1=z_{p_1}c_2t_2\ \tau \dots \tau\ z_{p_n}d_nt_n
\end{equation}
and 
\[
p_id_i t_i=d_it_i \quad\text{for all}\quad 1\leq i\leq n.
\]
Furthermore, using Claim \ref{cl:nr2} with $(c,d)=(c_n,d_n)$, $p=p_{n-1}$, $t=t_n$ and $w=v$, yields
$v\ \tau\ z_{p_{n-1}}c_nt_n\ \tau\ z_{p_n}d_nt_n$.
Similarly, Claim \ref{cl:nr2} with $(c,d)=(d_1,c_1)$, $p=p_1$, $t=t_1$ and $w=u$, gives
$u\ \tau\  z_{p_1}d_1t_1\ \tau\ z_{p_0}c_1t_1$. Combining this with \eqref{eq:nr4} yields $(u,v)\in\tau$,
as required.
\end{proof}

\begin{Cor}\label{cor:wrncoherent}
Every weakly right noetherian regular monoid is right coherent.
\end{Cor}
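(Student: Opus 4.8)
The plan is to derive Corollary \ref{cor:wrncoherent} as an immediate consequence of Theorem \ref{prop:wrnregular}. So suppose $S$ is a weakly right noetherian regular monoid; I must verify the hypothesis of Theorem \ref{prop:wrnregular}, namely that for every finitely generated right congruence $\rho$ on $S$ and all $a,b,x,y\in S$, the right ideals $(aS)\rho\cap(bS)\rho$ and $(aS,x)\cap(bS,y)$ are finitely generated. The key point is that in a weakly right noetherian monoid \emph{every} right ideal is finitely generated, so in particular these two specific right ideals are. That is essentially the whole argument, but I would spell out why each of the sets in question is genuinely a right ideal.

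For the second set, this is already handled by Lemma \ref{lem:annihilators}\ref{it:Iii}: $(aS,x)$ and $(bS,y)$ are right ideals of $S$, and the intersection of two right ideals is again a right ideal; hence $(aS,x)\cap(bS,y)$ is a right ideal, and so finitely generated by hypothesis. For the first set, $aS$ and $bS$ are right ideals, so by Lemma \ref{lem:annihilators}\ref{it:Ii} their $\rho$-closures $(aS)\rho$ and $(bS)\rho$ are right ideals; again the intersection of two right ideals is a right ideal, so $(aS)\rho\cap(bS)\rho$ is finitely generated. With both conditions of the hypothesis of Theorem \ref{prop:wrnregular} verified, that theorem applies and yields that $S$ is right coherent.

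I do not anticipate a genuine obstacle here; the content of the corollary lies entirely in Theorem \ref{prop:wrnregular}, and the corollary itself is a one-line specialisation. The only thing to be careful about is the (trivial) observation that the class of right ideals of $S$ is closed under finite intersection, so that the weakly right noetherian hypothesis can be applied to the composite sets $(aS)\rho\cap(bS)\rho$ and $(aS,x)\cap(bS,y)$ rather than merely to their constituent pieces.

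\begin{proof}
Let $S$ be a weakly right noetherian regular monoid, and let $\rho$ be a finitely generated right congruence on $S$ and $a,b,x,y\in S$. Since $aS$ and $bS$ are right ideals of $S$, Lemma \ref{lem:annihilators}\ref{it:Ii} gives that $(aS)\rho$ and $(bS)\rho$ are right ideals of $S$, and hence so is $(aS)\rho\cap(bS)\rho$. Similarly, by Lemma \ref{lem:annihilators}\ref{it:Iii} (with $I=aS$ and $I=bS$ respectively) the sets $(aS,x)$ and $(bS,y)$ are right ideals of $S$, and hence so is $(aS,x)\cap(bS,y)$. As $S$ is weakly right noetherian, both $(aS)\rho\cap(bS)\rho$ and $(aS,x)\cap(bS,y)$ are finitely generated. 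Thus $S$ satisfies the hypothesis of Theorem \ref{prop:wrnregular}, and is therefore right coherent.
\end{proof}
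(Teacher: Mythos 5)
Your proposal is correct and is exactly the intended derivation: the paper states the corollary as an immediate consequence of Theorem \ref{prop:wrnregular}, since in a weakly right noetherian monoid every right ideal (in particular $(aS)\rho\cap(bS)\rho$ and $(aS,x)\cap(bS,y)$, which are right ideals by Lemma \ref{lem:annihilators} and closure under intersection) is finitely generated. The only blemish is the cross-reference in your proof body, which should point to Lemma \ref{lem:annihilators}\ref{it:Iii} (as in your preamble) rather than \ref{it:Iiii}.
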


Examples of weakly right noetherian regular monoids include 
 completely regular monoids which can be expressed as a weakly (right) noetherian semilattice $Y$  of completely simple semigroups $S_{\alpha},\alpha\in Y$, where, additionally, 
for each $\alpha\in Y$ we have that $S_{\alpha}$ has finitely many $\ar$-classes.
They also include
 any regular $\omega$-semigroup, that is, any regular semigroup whose semilattice of idempotents is isomorphic to an inversely well-ordered chain $\omega$, i.e. $0>1>2>\dots$. If $S$ is of this kind, then it is well known that any right (and, indeed, left) ideal of $S$ is principal, so that, by Theorem~\ref{prop:wrnregular}, $S$ must be coherent. 
As we mentioned in Section \ref{sec:prelim}, bisimple inverse $\omega$-semigroups are precisely the Bruck--Reilly extensions of groups, and so we have:

\begin{Cor}
\label{cor:BRG}
Every Bruck--Reilly extension $\BR(G,\theta)$ {of} a group $G$ is coherent.
\end{Cor}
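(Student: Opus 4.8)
The plan is to recognise $S=\BR(G,\theta)$ as a regular monoid which is an $\omega$-semigroup with all one-sided ideals principal, and then to invoke Theorem~\ref{prop:wrnregular} together with its left--right dual.

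First I would assemble the relevant structural facts about $S$. It is a monoid, with identity $(0,1,0)$, and, being a Bruck--Reilly extension of a \emph{group}, it is inverse and hence regular. A short computation with the multiplication rule shows that $E(S)=\{(n,1,n):n\in\N_0\}$ and that $(m,1,m)(n,1,n)=(\max\{m,n\},1,\max\{m,n\})$, so that $E(S)$ is the inversely well-ordered chain $(0,1,0)>(1,1,1)>(2,1,2)>\cdots$; thus $S$ is a regular $\omega$-semigroup. (Alternatively, one simply quotes the identification, recalled in Section~\ref{sec:prelim}, of the Bruck--Reilly extensions of groups with the bisimple inverse $\omega$-semigroups.) Again from the multiplication formula, the first coordinate of any product $(k,h,p)s$ is at least $k$, while conversely every triple with first coordinate at least $k$ does arise as such a product; hence $(k,h,p)S=\{(k',g,d):k'\ge k,\ g\in G,\ d\in\N_0\}$, which depends only on $k$. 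Since any right ideal is a union of principal right ideals, and these form a descending $\omega$-chain, every right ideal of $S$ is either empty or equal to one of them, and in either case is finitely generated; indeed principal. (This is a special case of the well-known fact that in a regular $\omega$-semigroup all one-sided ideals are principal.)

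Now the hypotheses of Theorem~\ref{prop:wrnregular} are satisfied vacuously: for every finitely generated right congruence $\rho$ and all $a,b,x,y\in S$ the right ideals $(aS)\rho\cap(bS)\rho$ and $(aS,x)\cap(bS,y)$ are principal, hence finitely generated, and $S$ is regular. Theorem~\ref{prop:wrnregular} therefore gives that $S$ is right coherent. Finally, since $a\mapsto a^{-1}$ is an anti-automorphism of the inverse monoid $S$ --- equivalently, since all \emph{left} ideals of $S$ are principal too and one may apply the dual of Theorem~\ref{prop:wrnregular} --- $S$ is also left coherent, and hence $\BR(G,\theta)$ is coherent. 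There is no real obstacle here; the only points needing a little care are the bookkeeping in the Bruck--Reilly multiplication used to pin down $E(S)$ and the principal right ideals, and remembering to pass to the dual in order to obtain two-sided, rather than merely right, coherency.
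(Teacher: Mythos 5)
Your proposal is correct and follows essentially the same route as the paper: the authors also observe that $\BR(G,\theta)$ is a regular (indeed bisimple inverse) $\omega$-semigroup in which every right and every left ideal is principal, and then apply Theorem~\ref{prop:wrnregular} (together with its dual) to conclude coherence. The only difference is that you spell out the computation of the idempotents and principal right ideals, which the paper dispatches as ``well known.''
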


The conditions of Theorem \ref{prop:wrnregular} are in fact weaker than the condition of being weakly right noetherian, as we now show via two examples. The first concerns a Rees matrix semigroup $\mathcal{M}=\mathcal{M}(G;I,\Lambda;P)$ over a group $G$ (i.e. {a completely simple semigroup}). It is easy to see that for any $i\in I$, the set
$R_i=\{ (i,g,\lambda):g\in G,\lambda\in\Lambda\}$ is a right ideal, and is generated by any of its elements. Thus any right ideal $K$ of $\mathcal{M}$ is of the form $K=\bigcup_{j\in J}R_j$ for some $J\subseteq I$.

\begin{Prop}\label{prop:cs1} 
Let $S=\mathcal{M}(G;I,\Lambda;P)^1$ be a Rees matrix monoid over a group $G$. Then $S$ satisfies the conditions of Theorem~\ref{prop:wrnregular} and hence is right coherent. However, $S$ is weakly right noetherian if and only if $I$ is finite. 
\end{Prop}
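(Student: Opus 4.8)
The plan is to verify the two hypotheses of Theorem~\ref{prop:wrnregular} for $S$, exploiting the very restricted shape of the right ideals of $S$, and then to treat the weakly right noetherian claim by hand. First note that $S$ is regular, since the completely simple semigroup $\mathcal{M}=\mathcal{M}(G;I,\Lambda;P)$ is regular and adjoining an identity preserves regularity. Next I would record the principal right ideals: $1S=S$, while $(i,g,\lambda)S=R_i$ because, $G$ being a group, $(i,g,\lambda)(j,h,\mu)$ runs over all of $R_i$ as $(j,h,\mu)$ varies over $\mathcal{M}$. Combined with the description of right ideals of $\mathcal{M}$ recalled before the statement, this shows that every right ideal of $S$ is either $S$ itself or a union $\bigcup_{j\in J}R_j$ with $J\subseteq I$; and since the $R_j$ are pairwise disjoint and each is generated by any one of its elements, such a union is finitely generated precisely when $J$ is finite.

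For the ideals $(aS,x)\cap(bS,y)$, a short case analysis according to whether each of $a,b,x,y$ is $1$ or lies in $\mathcal{M}$ shows that $(aS,x)$ is always one of $\emptyset$, $S$ or $aS$, hence lies in $\{\emptyset,S\}\cup\{R_k:k\in I\}$; an intersection of two sets from this list is again in the list, hence finitely generated. (This step uses nothing about $\rho$.) The real work is to show that $(aS)\rho\cap(bS)\rho$ is finitely generated for every finitely generated right congruence $\rho$. Since $(aS)\rho=S$ when $a=1$ and $(aS)\rho=R_i\rho$, the $\rho$-closure of $R_i$, when $a\in R_i$, and since an intersection of two right ideals of the two shapes above is finitely generated as soon as each of them is, it suffices to prove that $R_i\rho$ is finitely generated for every $i\in I$.

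The crux is the observation that \emph{if $1\mathrel{\rho}d$ for some $d\in\mathcal{M}$, say $d\in R_j$, then $R_l\rho=S$ for every $l\in I$}: indeed, for $s\in R_l$ we have $s\mathrel{\rho}ds$ (as $\rho$ is a right congruence), and, $G$ being a group, $\{ds:s\in R_l\}=R_j$, so $d\mathrel{\rho}s_0$ for some $s_0\in R_l$ and hence $1\mathrel{\rho}s_0\in R_l$. Granting this, fix $i$ and suppose $R_i\rho\neq S$; then $1\notin R_i\rho$, so $1$ is $\rho$-related to no element of $\mathcal{M}$, i.e.\ $1\rho=\{1\}$. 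Consequently any finite generating set for $\rho$ must lie inside $(\mathcal{M}\times\mathcal{M})\cup\{(1,1)\}$, and discarding $(1,1)$ we may assume $\rho=\langle X\rangle$ with $X\subseteq\mathcal{M}\times\mathcal{M}$ finite. Then in any $\overline{X}$-sequence every term $c_lt_l$ (with $c_l\in\mathcal{M}$) has the same first coordinate as $c_l$, which lies in the finite set $F\subseteq I$ of first coordinates occurring in $X$; hence every element of $R_i\rho$ lies in $R_i\cup\bigcup_{j\in F}R_j$, so $R_i\rho=\bigcup_{j\in J}R_j$ for some finite $J\subseteq\{i\}\cup F$, and is finitely generated. (If instead $R_i\rho=S$ it is finitely generated trivially.) Thus $S$ satisfies the hypotheses of Theorem~\ref{prop:wrnregular} and is right coherent.

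Finally, for the weakly right noetherian claim: a right ideal $K$ of $S$ with $1\in K$ equals $S=1S$; a right ideal $K$ with $1\notin K$ is contained in the ideal $\mathcal{M}$, hence is a right ideal of $\mathcal{M}$ and so equals $\bigcup_{j\in J}R_j$ where $J$ is the set of first coordinates of elements of $K$, which by the first paragraph is finitely generated exactly when $J$ is finite. Taking $K=\mathcal{M}$ (so $J=I$) shows $S$ is not weakly right noetherian when $I$ is infinite, whereas if $I$ is finite every right ideal of $S$ is one of the finitely generated ideals just described. I expect the only genuine obstacle to be the presence of the identity $1$ inside $\overline{X}$-sequences; the lemma in the third paragraph is precisely the device that removes it, by forcing $1\rho=\{1\}$ in the relevant case and thereby reducing to $X\subseteq\mathcal{M}\times\mathcal{M}$, after which the first-coordinate bookkeeping is immediate.
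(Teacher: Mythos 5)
Your proposal is correct and follows essentially the same route as the paper's proof: the same description of right ideals of $S$, the same first-coordinate bookkeeping along $\overline{X}$-sequences to bound $(\mathbf{a}S)\rho$ when $1\rho=\{1\}$, the same use of invertibility in $G$ to show that $1\mathrel{\rho}d$ for a triple $d$ forces $(\mathbf{a}S)\rho=S$, and the same case analysis for $(\mathbf{a}S,\mathbf{b})$. The only cosmetic difference is that you organise the dichotomy around whether $R_i\rho=S$ rather than around whether $1\rho=\{1\}$, which is logically equivalent.
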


\begin{proof} We begin by noting that the proper right ideals of $S$ are exactly the right ideals of $\mathcal{M}$, and
thus of the form
$K=\bigcup_{j\in J}R_j$ where $J\subseteq I$. Moreover, $K$ is finitely generated if and only if $J$ is finite.
The final statement of the proposition follows immediately. We also observe that  if  $K$ and $L$ are finitely generated right ideals of $S$, then so is $K\cap L$. Thus to show that the conditions of Theorem~\ref{prop:wrnregular} hold, it is sufficient to show that for any finitely generated right congruence $\rho$ on $S$ and any $\a,\b\in S$ the right ideals $(\a S)\rho$  and  $(\a S,\b )$ are finitely generated. 

Let $X$ be a  finite symmetric set of generators for $\rho$. Suppose first that $1\rho=\{ 1\}$, so that  $X$ consists entirely of pairs of the form $\big((i,g,\lambda), (j,h,\mu)\big)$. 
If $\a=1$ then  $(\a S)\rho=S=1S$. Otherwise, let $\a=(\ell, k,\nu)$
so that $\a S=\{ \ell\} \times G\times \Lambda$. If $(o, x, \kappa)\,\rho\, (\ell, y, \tau)$
for some $(\ell, y,\tau)$ then either $(o,  x, \kappa)=(\ell, y, \tau)$  or
$(o, x, \kappa)= (i,g,\lambda){\mathbf{t}}$ for some $(i,g,\lambda)$ with $\big((i,g,\lambda), (j,h,\mu)\big)\in X$ and ${\mathbf{t}}\in S$. In the first case  $o=\ell$ and in  the second $o=i$. Since $X$ is finite we deduce that $(\a S)\rho$ is finitely generated. 

Suppose now that $1\rho\neq \{ 1\}$, so that $X$ contains at least one pair of the form
$\big((i,g,\lambda), 1\big)$. Again, if $\a=1$ then $(\a S)\rho=S$. On the other hand, if  $\a =(\ell, k,\nu)$ then
\[1\ \rho\ (i,g,\lambda)=(i,g,\lambda)(\ell,p_{\lambda \ell}^{-1},\lambda)\ \rho\ 
1(\ell,p_{\lambda \ell}^{-1},\lambda)=(\ell ,p_{\lambda \ell}^{-1},\lambda),\]
so that $1\in \big((\ell,p_{\lambda \ell}^{-1},\lambda)S\big)\rho=(\a S)\rho$,
giving that $(\a S)\rho=S$.

Now we have:
\begin{txitemize}
\item
if $\a=1$ then $(\a S,\b)=S$;
\item
 if $\b=1$ then
$(\a S,\b)=\a S$;
\item
if $\a,\b\neq 1$ and $\a,\b\in R_i$ for some $i\in I$, then $(\a S,\b)=S$;
\item
if $\a,\b\neq 1$ with $\a\in R_i$, $\b\in R_j$ where $i\neq j$, then  $(\a S,\b)=\emptyset$.
\end{txitemize}
In any case $(\a S,\b)$ is finitely generated, which concludes the proof.
\end{proof}

We end this section by giving two examples of regular monoids that are right coherent but are  not weakly right noetherian. 
The first of them is a Brandt monoid over a group, and {it} does not even satisfy the conditions of Theorem~\ref{prop:wrnregular}.

\begin{Prop}\label{prop:brandtinf} 
Let $S=\mathcal{B}(G;I)^1$, where $G$ is a group and $I$ is infinite, and let 
$\rho=\bigl\langle \big( (i,g,i),1\big)\bigr\rangle $ for fixed $i\in I$, $g\in G$.
Then $S$ is right coherent, but the ideal $(0S)\rho$ is not finitely generated.
\end{Prop}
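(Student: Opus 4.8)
The plan is to establish the two assertions of Proposition~\ref{prop:brandtinf} separately, since they are essentially independent: that $(0S)\rho$ is not finitely generated, and that $S$ is nonetheless right coherent.

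\textbf{The non-finite-generation claim.} First I would pin down the congruence $\rho=\langle\overline{H}\rangle$ where $\overline{H}=\{((i,g,i),1),(1,(i,g,i))\}$. Using Lemma~\ref{lem:cong}, one analyses $\overline{H}$-sequences: right-multiplying the generating pair by an element $(j,h,k)$ of $S$ (or by $0$, or by $1$) and tracking what happens. The key computation is that $(i,g,i)\cdot(i,h,k)=(i,gh,k)$ while $1\cdot(i,h,k)=(i,h,k)$, so $\rho$ identifies $(i,h,k)$ with $(i,g^{-1}h,k)$ for all $h,k$; iterating, the $\rho$-class of each $(i,h,k)$ is the full set $\{(i,h',k):h'\in G\}$ if $\langle g\rangle=G$, or more precisely the coset structure determined by $\langle g\rangle$, and importantly $(i,g,i)$ and $1$ lie in a common class while $0$ stays alone unless forced otherwise. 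One checks $0\rho=\{0\}$: any $\overline{H}$-sequence starting at $0$ must have $c_1t_1=0$, forcing (since $c_1\in\{(i,g,i),1\}$, neither a zero divisor making $0$ from the left in that position) $t_1$ to collapse things only to $0$, so $0$ is $\rho$-related only to itself. Hence $(0S)\rho=0\rho\cup\bigcup\{a\rho:a\in 0S\setminus\{0\}\}=\{0\}\cup\bigcup\{a\rho: a\in\{0\}\}=\{0\}$? No --- here $0S=\{0\}$, so I must instead recall $0S=\{0\}$ and $(0S)\rho=0\rho=\{0\}$, which \emph{is} finitely generated, contradicting the claim; therefore the intended reading must be that $0$ is identified with more, i.e. the sequence $1\cdot(j,h,i)=(j,h,i)$ but $(i,g,i)\cdot(j,h,i)=0$ for $j\neq i$, so $\rho$ identifies $0$ with every $(j,h,i)$, $j\neq i$. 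Thus $0\rho\supseteq\{0\}\cup\{(j,h,i):j\in I\setminus\{i\},h\in G\}$, and $(0S)\rho=0\rho$ is a right ideal containing infinitely many $\ar$-classes $R_j$ ($j\neq i$) of $\mathcal{B}(G;I)$, hence (by the description of right ideals of $\mathcal{B}(G;I)$ noted before Proposition~\ref{prop:cs1}, adapted to the Brandt case) is not finitely generated. The main obstacle here is getting the $\rho$-class of $0$ exactly right and arguing rigorously, via Lemma~\ref{lem:cong}, that no further collapse occurs and that an infinite union of distinct $\ar$-classes cannot be finitely generated.

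\textbf{Right coherency.} For this I would verify Theorem~\ref{thm:chasetype}\ref{it:coh2} directly for an arbitrary finitely generated right congruence $\sigma=\langle X\rangle$ on $S$, with $X$ finite and symmetric. The structural point is that $S=\mathcal{B}(G;I)^1$ has a very restricted ideal/congruence structure: each $R_j=\{j\}\times G\times I$ is a principal right ideal generated by any element, $0S=\{0\}$, and $1S=S$. Any finitely generated right congruence $\sigma$ only involves finitely many indices from $I$ among its generators, and I would argue that a right congruence on $S$ is either "small" (contained, off the diagonal, in finitely many $R_j\cup\{0\}$) or "large" (collapsing $1$ with something, forcing a large class). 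In either case the subacts $(a\sigma)S\cap(b\sigma)S$ and the annihilators $\ann(a\sigma)$ turn out to be finitely generated, essentially by the same case analysis used in the proof of Proposition~\ref{prop:cs1} for Rees matrix monoids over a group (the Brandt monoid is the Rees-matrix-with-zero analogue, so the bookkeeping is parallel: one tracks the first coordinate $j\in I$ and the group coordinate separately, observing the group coordinate contributes no infinity). Alternatively — and this may be cleaner — I would first prove the general Theorem~\ref{thm:brandt} direction (that $M$ right coherent implies $\mathcal{B}(M;I)^1$ right coherent), which the paper announces; then this Proposition is the instance $M=G$, a group, which is trivially right coherent. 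I expect the cleanest write-up to simply invoke that later theorem, remarking that the point of the Proposition is the second assertion. The main obstacle, if one does \emph{not} forward-reference Theorem~\ref{thm:brandt}, is handling the "large" congruences that involve $1\sigma\neq\{1\}$: one must show that then many coordinates collapse so that $S/\sigma$ is in effect a quotient of a very small act, keeping the finite-generation of intersections and annihilators under control.
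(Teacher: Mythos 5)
Your overall strategy coincides with the paper's: the coherency assertion is simply deferred to Theorem~\ref{thm:brandt} (of which this proposition is the instance $M=G$, a group being trivially right coherent), and the second assertion is proved by computing the $\rho$-class of $0$ and observing that it meets infinitely many of the right ideals $R_j$. Your key computation --- $1\cdot(j,h,k)=(j,h,k)$ versus $(i,g,i)\cdot(j,h,k)=0$ for $j\neq i$ --- is exactly the paper's, and correctly yields $\{0\}\cup\bigcup_{j\neq i}R_j\subseteq(0S)\rho$ (after your initial false start with $0\rho=\{0\}$, which you do catch and repair). Your instinct to avoid the direct case analysis for coherency is also sound: the Brandt case is genuinely harder than the zero-free Rees matrix case of Proposition~\ref{prop:cs1}, and the paper needs the whole of Section~\ref{sec:up} for it, so the forward reference is the right call.

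The one thing you leave undone is precisely the step you flag as ``the main obstacle'': showing that no further collapse occurs, i.e.\ that $1\notin(0S)\rho$. This is not a cosmetic verification. If $1$ did lie in $(0S)\rho$, then $(0S)\rho=1S=S$ would be principal and the proposition would be false; so this is the only substantive piece of the second assertion and must actually be carried out. The paper does it by induction on the length of $X$-sequences, where $X$ is the symmetric closure of $\bigl\{\bigl((i,g,i),1\bigr)\bigr\}$: if an element of $R_i\cup\{1\}$ is written as $\mathbf{c}\mathbf{t}$ with $(\mathbf{c},\mathbf{d})\in X$ and $\mathbf{t}\in S$, then the resulting $\mathbf{d}\mathbf{t}$ again lies in $R_i\cup\{1\}$ (a single step from $(i,h,\lambda)$ reaches only $1$, $(i,g^{-1}h,\lambda)$ or $(i,gh,\lambda)$, and a single step from $1$ reaches only $(i,g,i)$); hence $R_i\cup\{1\}$ is a union of $\rho$-classes disjoint from $0\rho$. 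With that inserted your argument closes: $(0S)\rho=\{0\}\cup\bigcup_{j\neq i}R_j$ omits $1$, a finitely generated right ideal of $S$ not containing $1$ is a union of finitely many $R_j$'s together with $\{0\}$, and $I$ is infinite.
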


\begin{proof}
That $S$ is right coherent will be proved in Theorem \ref{thm:brandt}.
To prove the second assertion,
 let $J=\{ 0\} \cup \bigcup_{j\in I,j\neq i}R_j$, and we claim that $J=(0S)\rho$. For any $j\in I$ with $j\neq i$ we have 
\[(j,h,j)=1(j,h,j )\, \rho\,   (i,g,i)(j,h,j )=0,\]
and it
 follows that the right ideal $J\subseteq (0S)\rho=0\rho$.
We claim that $1$, or equivalently $(i,g,i)$,  cannot be in $(0S)\rho$. Recall that
since $R_i$ is a right ideal, we have $(i,g,i)\in (0S)\rho$ if and only if 
$(i,h,\lambda)\in (0S)\rho$ for any $(i,h,\lambda)\in R_i$. But, the latter is impossible, as we now show. 
Suppose that $(i,h,\lambda)\in R_i$ and $(i,h,\lambda)$ is related to
$\mathbf{s}\in S$ via an $X$-sequence of length 1. Then $(i,h,\lambda)=\mathbf{c}\mathbf{t}$ and $\mathbf{d}\mathbf{t}=
\mathbf{s}$,  where $(\mathbf{c},\mathbf{d})$ or $(\mathbf{d},\mathbf{c})$ is $ \big( (i,g,i),1\big)$ and
$\mathbf{t}\in S$. If $(\mathbf{c},\mathbf{d})= \big( (i,g,i),1\big)$,
then either $\mathbf{t}=1$ and then $\mathbf{s}=1$, or $\mathbf{t}=(i,g^{-1}h,\lambda)$ and then also
$\mathbf{s}= (i,g^{-1}h,\lambda)$. Similarly, if $(\mathbf{d},\mathbf{c})= \big( (i,g,i),1\big)$,
then $\mathbf{s}=(i,gh,\lambda)$. Since a single step in an $X$-sequence starting from $1$ can only take us to $(i,g,i)$, our claim follows by induction on the length of $X$-sequences.   
Consequently, 
 $(0S)\rho=J$.   As $I$ is infinite, 
$(0S)\rho$ is not finitely generated as a right ideal of $S$. \end{proof}

\smallskip 
For our second example, we return to a regular semigroup $S$ with $\omega$-chain of idempotents. 
If $S$ is bisimple, then, as commented in Section~\ref{sec:prelim}, $S$ is isomorphic to a Bruck--Reilly {extension} $\BR(G,\theta)$. 
This construction was extended by Warne \cite{warne:1968}, who noticed that one could replace $\N_0$ by $\Z$, and, using the same rule for multiplication, obtain an inverse monoid with chain of idempotents isomorphic to $\Z$ (where the natural correspondence inverts the order). This construction yields an
 {\it extended Bruck--Reilly} {extension} $\mathrm{EBR}(G,\theta)$. Note that
 $\mathrm{EBR}(G,\theta)$ is inverse, and is neither a monoid nor  weakly right noetherian. 
 
\begin{Prop}\label{prop:DBR}
Let $G$ be a group,  $\theta \colon G \to G$ a homomorphism, and $S=\mathrm{EBR}(G,\theta)^1$. Then $S$ is right coherent, but not weakly right noetherian.
\end{Prop}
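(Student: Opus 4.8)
The plan is to reduce both assertions to facts established earlier for ordinary Bruck--Reilly extensions and $\omega$-semigroups. For the failure of the weakly right noetherian property, note that $\mathrm{EBR}(G,\theta)$ has a chain of idempotents isomorphic to $\mathbb{Z}$ (inversely ordered), so the idempotents $e_n$ indexed by $n\in\mathbb{Z}$ form a strictly $\leq_{\mathcal{J}}$-increasing chain going downward without bound; the right ideals $e_nS$ then form a strictly descending chain and, taking $I=\bigcup_{n\le 0}e_nS$ (the union of the bottom half of the chain together with the adjoined identity's complement), one obtains a right ideal that cannot be finitely generated, since any finite subset lies in some $e_nS$ with $n$ bounded below, whereas $I$ contains elements arbitrarily far down the chain. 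This is essentially the same argument as for $\mathrm{EBR}(G,\theta)$ not being a monoid: there is no top idempotent, hence no principal right ideal exhausting the others. I would phrase this cleanly using the explicit $\mathbb{Z}\times G\times\mathbb{Z}$ coordinates and the multiplication rule, exhibiting the relevant right ideal and a concrete infinite ascending chain of finitely generated right subideals with no finitely generated upper bound.

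For right coherency, the strategy is to verify the hypotheses of Theorem~\ref{prop:wrnregular}: $S=\mathrm{EBR}(G,\theta)^1$ is regular (it is inverse, with the adjoined identity), so it suffices to show that for every finitely generated right congruence $\rho$ and all $a,b,x,y\in S$ the right ideals $(aS)\rho\cap(bS)\rho$ and $(aS,x)\cap(bS,y)$ are finitely generated. As in the proof for $\omega$-semigroups mentioned in the excerpt (where one uses that every right ideal is principal to invoke Theorem~\ref{prop:wrnregular}), the key structural input is an understanding of the right ideals of $\mathrm{EBR}(G,\theta)^1$: each right ideal of the $\mathrm{EBR}$ part is generated by the idempotents it contains, and the idempotent structure is the $\mathbb{Z}$-chain, so a right ideal of $\mathrm{EBR}(G,\theta)$ is either all of it or "bounded above" by some $e_n$, i.e. principal, generated by $e_n$. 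Intersections of two such are again of the same form, hence principal or finitely generated; the sets $(aS,x)=\{t:xt\in aS\}$ are again right ideals of this restricted shape. Adjoining the identity only adds $S$ itself as a further right ideal and causes no trouble. Then $(aS)\rho$ must be handled: one shows the $\rho$-closure of such a right ideal is again of the admissible form, using Lemma~\ref{lem:annihilators} together with the finiteness of the generating set $X$ for $\rho$ and the combinatorics of $X$-sequences in the $\mathbb{Z}$-chain, exactly as in the Brandt-monoid computations earlier in the section.

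The main obstacle is the analysis of the $\rho$-closure $(aS)\rho$: unlike the bare right ideals, a finitely generated right congruence can, a priori, "pull in" elements far down or far up the $\mathbb{Z}$-chain, and one must rule out that this destroys finite generation. The argument will mirror the proof of Proposition~\ref{prop:brandtinf}/the treatment of Rees matrix monoids in Proposition~\ref{prop:cs1}: take a finite symmetric generating set $X$ for $\rho$, track how a single $\overline{X}$-step can move the "$\mathbb{Z}$-coordinate" of an element, observe that the set of coordinate-shifts arising from $X$ is finite and bounded, and conclude by induction on the length of $X$-sequences that $(aS)\rho$ is contained in (hence equal to, by Lemma~\ref{lem:annihilators}\ref{it:Ii}) a right ideal generated by finitely many idempotents. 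Once this is in place, Theorem~\ref{prop:wrnregular} applies verbatim and gives right coherency.
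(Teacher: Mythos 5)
Your proposal is correct and follows essentially the same route as the paper: both arguments rest on classifying the right ideals of $S$ (all are empty, principal, or $S$ itself, except for $S\setminus\{1\}$, which witnesses the failure of weak right noetherianity) and then verifying the hypotheses of Theorem~\ref{prop:wrnregular} by showing that neither $(\mathbf{a}S)\rho$ nor $(\mathbf{e}S,\mathbf{a})$ can equal $S\setminus\{1\}$ --- your ``bounded coordinate-shift'' induction for the $\rho$-closure is exactly the paper's observation that $(n,e,n)$ lies in a singleton $\rho$-class once $n$ is below every first coordinate occurring in the finite generating set $X$. The only detail left implicit in your sketch is the direct computation showing that $(\mathbf{e}S,\mathbf{a})$ is principal rather than $S\setminus\{1\}$ (the paper computes it to be $(q+i-p,e,q+i-p)S$), but this is routine.
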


\begin{proof}
Denote the identity of $G$ by $e$.  
It is entirely routine to check that the right ideals of $S$ are precisely $S,S\setminus\{1\},\emptyset$ and $R_i=\{(j,g,k):j\geq i\}$  for every $i\in \mathbb{Z}$, 
and that $S\setminus \{1\}$ is the only right ideal that is not finitely generated.

To prove that $S$ is right coherent we make use of
Theorem \ref{prop:wrnregular}.
To this end, let
$\rho$ be a right congruence on $S$ generated by the finite symmetric set $X \subseteq S \times S$.
Since the right ideals of $S$ are linearly ordered, it is enough to show that for any $\a\in S$ and $\mathbf{e}\in E(S)$ we can have
neither $(\a S)\rho$ nor $(\mathbf{e} S,\a)$ being equal to the non-finitely generated right ideal $S\setminus \{1\}$.

We first consider $(\a S)\rho$. If 
 $\{1\}$ is not a $\rho$-class then as $(\a S)\rho$ is a union of $\rho$-classes, {we have}
 $(\a S)\rho\neq S\setminus \{1\}$.
 On the other hand, if $\{1\}$ is a $\rho$-class then let $n$ be an integer smaller than any number appearing in any of the first  {coordinates} of elements of $X$.
Then $(n,e,n)\rho=\{(n,e,n)\}$, because $(n,e,n)$ cannot be written as $\mathbf{c}\mathbf{t}$ where $(\mathbf{c},\mathbf{d})\in X$ and $\mathbf{t}\in S$.
As a consequence, $(\a S)\rho \neq S\setminus \{1\}$ for any $\a\in S$. 

Consider now the right ideal  $(\mathbf{e}S,\a )$. If $\a\in \mathbf{e}S$ then $(\mathbf{e}S,\a)=S$ and if $\a=1$, then $(\mathbf{e}S,\a)=\mathbf{e}S$. Otherwise, $\mathbf{e}=(i,e,i)$ and $\a=(p,c,q)$ where $p<i$, and then an easy calculation yields $(\mathbf{e}S,\a)=(q+i-p,e,q+i-p)S$. 
\end{proof}

\section{Submonoids}
\label{sec:down}

In this section we begin the examination of when right coherency of a monoid $S$ is inherited by 
certain monoid subsemigroups. Recall that if $T$ is a subsemigroup of a monoid $S$ then
$T$ is a {\em retract} of $S$ if there is a morphism 
(called \emph{retraction}) $\theta:S\rightarrow S$ such that $\Im \theta=T$ and $\theta|_T:T\rightarrow T$ is the identity  map  on $T$.
 It is clear that if $T$ is a retract of $S$ then it {must be} a monoid subsemigroup. 

\begin{Prop}\cite[Theorem 6.3]{GM}\label{prop:retracts} Let $S$ be a right coherent monoid and let $T$ be a retract of $S$. Then $T$ is right coherent. 
\end{Prop}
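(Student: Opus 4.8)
The plan is to use the characterisation of right coherency given in Theorem~\ref{thm:chasetype}, specifically condition \ref{it:coh2}, working with right congruences on $T$ and pushing the relevant computations up into $S$ via the retraction $\theta$. Let $\rho$ be a finitely generated right congruence on $T$, say $\rho=\langle X\rangle_T$ for a finite symmetric $X\subseteq T\times T$. Since $X\subseteq S\times S$ as well, we can form the right congruence $\rho^S=\langle X\rangle_S$ on $S$, which is finitely generated; the first thing to verify is that $\rho^S\cap (T\times T)=\rho$. The inclusion $\supseteq$ is clear; for $\subseteq$, given an $\overline{X}$-sequence in $S$ connecting $a,b\in T$, apply $\theta$ termwise. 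Because $\theta$ is an $S$-morphism fixing $T$ pointwise and $\theta|_T=\mathrm{id}_T$, each pair $(c_j,d_j)\in\overline{X}$ is fixed (its entries lie in $T$), and $a=c_1t_1$ becomes $a=a\theta=(c_1\theta)(t_1\theta)=c_1(t_1\theta)$ with $t_1\theta\in T$, and so on; thus the image is an $\overline{X}$-sequence inside $T$ connecting $a$ and $b$, giving $(a,b)\in\rho$.

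Next I would transfer the two finiteness conditions of Theorem~\ref{thm:chasetype}\ref{it:coh2}. For the intersection condition, fix $a,b\in T$. Since $S$ is right coherent, $(a\rho^S)S\cap(b\rho^S)S$ is a finitely generated subact of $S/\rho^S$, say generated by $c_1\rho^S,\dots,c_n\rho^S$. I claim $(a\rho)T\cap(b\rho)T$ is generated in $T/\rho$ by $(c_1\theta)\rho,\dots,(c_n\theta)\rho$. The point is that the retraction induces a well-defined $S$-act surjection (in fact a $T$-act map) $S/\rho^S\to T/\rho$, $s\rho^S\mapsto (s\theta)\rho$ — well-defined precisely by the compatibility of $\theta$ with $\rho^S$ and by $\rho^S\cap(T\times T)=\rho$ — which maps the intersection onto $(a\rho)T\cap(b\rho)T$ once one checks that a relation $as\ \rho^S\ bt$ with $s,t\in S$ descends, on applying $\theta$, to $a(s\theta)\ \rho\ b(t\theta)$ with $s\theta,t\theta\in T$; conversely any element of $(a\rho)T\cap(b\rho)T$ already lives in the image. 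Finite generation is then inherited by surjective image.

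For the annihilator condition, fix $a\in T$ and consider $\ann_T(a\rho)=\{(u,v)\in T\times T: au\ \rho\ av\}$. By right coherency of $S$, $\ann_S(a\rho^S)=\{(u,v)\in S\times S: au\ \rho^S\ av\}$ is a finitely generated right congruence on $S$, say $\ann_S(a\rho^S)=\langle Y\rangle_S$ with $Y$ finite symmetric. Using $\rho^S\cap(T\times T)=\rho$ one sees $\ann_S(a\rho^S)\cap(T\times T)=\ann_T(a\rho)$. Now apply $\theta$ to the pairs in $Y$: set $Y\theta=\{(u\theta,v\theta):(u,v)\in Y\}$, a finite symmetric subset of $T\times T$, and I claim it lies in $\ann_T(a\rho)$ and generates it. Membership follows because $(u,v)\in\ann_S(a\rho^S)$ gives $au\ \rho^S\ av$, whence applying $\theta$ (and $a\theta=a$) yields $a(u\theta)\ \rho^S\ a(v\theta)$, with both sides in $T$, so $a(u\theta)\ \rho\ a(v\theta)$. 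For generation: given $(u,v)\in\ann_T(a\rho)\subseteq\ann_S(a\rho^S)$, take a $\overline{Y}$-sequence in $S$ connecting $u,v$ and apply $\theta$ termwise, again using $\theta|_T=\mathrm{id}_T$ and that $\theta$ is an $S$-morphism, to obtain a $\overline{Y\theta}$-sequence inside $T$ connecting $u$ and $v$; hence $(u,v)\in\langle Y\theta\rangle_T$.

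The main obstacle, and the step deserving the most care, is the bookkeeping around the passage between congruences on $S$ and on $T$ — in particular establishing $\rho^S\cap(T\times T)=\rho$ and $\ann_S(a\rho^S)\cap(T\times T)=\ann_T(a\rho)$, and checking the induced maps on quotient acts are well defined. All of these reduce to the single mechanism of applying the retraction $\theta$ termwise to an $\overline{H}$-sequence (Lemma~\ref{lem:cong}) and using that $\theta$ is an $S$-morphism which is the identity on $T$; the elements $t_j\in S$ become $t_j\theta\in T$, and pairs from $X$, $Y$, or $Y\theta$ (all with entries in $T$) are either fixed or mapped into the intended generating set. Once this mechanism is set up cleanly, the three verifications are routine, and Theorem~\ref{thm:chasetype} then delivers that $T$ is right coherent.
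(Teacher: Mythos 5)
Your proof is correct, and it is essentially the argument of the cited source \cite[Theorem 6.3]{GM} (the paper itself states this proposition with a citation rather than a proof): push a finite generating set of a right congruence on $T$ up to $S$, use right coherency of $S$ there, and apply the retraction $\theta$ termwise to $\overline{H}$-sequences to bring the two finiteness conditions of Theorem~\ref{thm:chasetype} back down to $T$. All the verifications (well-definedness of $s\rho^S\mapsto(s\theta)\rho$, the identities $\rho^S\cap(T\times T)=\rho$ and $\ann_S(a\rho^S)\cap(T\times T)=\ann_T(a\rho)$, and the generation claims) go through as you describe.
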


\begin{Cor}\label{cor:ideal} Let $I$ be an ideal of a right  coherent monoid $S$ such that
$I$ has an identity. Then $I$ is right coherent.
\end{Cor}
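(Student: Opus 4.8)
The plan is to deduce this from Proposition~\ref{prop:retracts} by exhibiting $I$ as a retract of $S$. Suppose $I$ is an ideal of the right coherent monoid $S$, and suppose $I$ has an identity element, say $e\in I$. Since $I$ is an ideal, for every $s\in S$ we have $es\in I$ and $se\in I$; and since $e$ is the identity of $I$, for every $i\in I$ we have $ei=ie=i$. In particular $e$ is idempotent, $e=e^2$.

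The retraction to use is $\theta\colon S\to S$ defined by $s\theta=ese$. First I would check that $\theta$ is a monoid homomorphism: we have $1\theta=e^2=e$ is the identity of $I$ but not necessarily of $S$, so strictly $\theta$ is a semigroup homomorphism of $S$ into $S$ whose image lands in the submonoid $I$ (with identity $e$); this is fine, since what Proposition~\ref{prop:retracts} needs is a retraction in the sense defined just above it, namely a morphism $\theta\colon S\to S$ with $\Im\theta=T$ and $\theta|_T=\mathrm{id}_T$. For multiplicativity, $(st)\theta=e(st)e$, while $(s\theta)(t\theta)=(ese)(ete)=es(e^2)te=e(st)e$ using $e^2=e$, so $\theta$ is multiplicative. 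Its image is contained in $eSe\subseteq I$, and conversely for $i\in I$ we have $i\theta=eie=i$, so $\Im\theta=I$ exactly and $\theta|_I$ is the identity on $I$. Hence $I$ is a retract of $S$, and Proposition~\ref{prop:retracts} gives that $I$ is right coherent.

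There is essentially no obstacle here: the only point requiring a moment's care is the bookkeeping around identities — namely that $\theta$ need not fix the identity of $S$, but does fix the identity $e$ of $I$, and that the definition of retract in the paper is phrased exactly so as to accommodate this (it asks $\theta$ to be a morphism $S\to S$ with image $T$ restricting to the identity on $T$, not a monoid morphism fixing $1_S$). Once this is noted, the verification that $s\mapsto ese$ is the required retraction is the short computation above, and the result follows immediately.
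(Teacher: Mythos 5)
Your proof is correct and follows essentially the same route as the paper, which uses the even simpler retraction $a\mapsto ea$ together with Proposition~\ref{prop:retracts}. One small imprecision in your multiplicativity check: $(ese)(ete)=esete$, and passing from this to $este$ uses not merely $e^2=e$ but the fact that $ese=es$ (which holds because $es\in I$ and $e$ is a right identity for $I$) --- the analogous observation is also what makes the paper's one-sided map a morphism.
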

\begin{proof} If $e$ is the identity of $I$, then it is easy to see that
$\theta:S\rightarrow I$ given by $a\theta=ea$ is a retraction onto $I$. The result now follows by Proposition~\ref{prop:retracts}.
\end{proof}

For a semigroup $M$, we denote by
$M^{\underline{1}}$ the monoid obtained by adjoining an identity $\underline{1}$ {\em whether or not $M$ already possesses one}. 
As another application of the above result, we obtain one half of the following:

\begin{Prop}\label{cor:1}  Let $M$ be a monoid. Then $M$ is right coherent if and only if 
$M^{\underline{1}}$ is right coherent.
\end{Prop}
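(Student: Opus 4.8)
The plan is to prove the two directions separately, using the machinery already set up. The forward direction, that $M^{\underline{1}}$ right coherent implies $M$ right coherent, can be obtained almost immediately: $M$ is a retract of $M^{\underline{1}}$ via the retraction $\theta\colon M^{\underline 1}\to M^{\underline 1}$ that fixes every element of $M$ and sends $\underline 1$ to the identity $1$ of $M$. (One must check this is a monoid homomorphism; since $\underline 1$ acts as an identity and $1$ acts as an identity on $M$, this is routine.) Then Proposition~\ref{prop:retracts} gives that $M$ is right coherent. Note this covers one half even when $M$ already has an identity, in which case $M^{\underline 1}=M^1$ with an \emph{extra} adjoined identity.

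For the converse, suppose $M$ is right coherent; I want to show $S:=M^{\underline 1}$ is right coherent. The idea is to use the characterisation in Theorem~\ref{thm:chasetype}\ref{it:coh2}: given a finitely generated right congruence $\rho$ on $S=M\cup\{\underline 1\}$ and $a,b\in S$, show that $(a\rho)S\cap(b\rho)S$ is finitely generated and $\ann(a\rho)$ is a finitely generated right congruence on $S$. The point is that $S$ differs from $M$ only in the single adjoined point $\underline 1$, which is idempotent and whose $\eh$-class (indeed $\mathcal R$-class and $\mathcal L$-class) is a singleton unless $\underline 1$ is identified with something under $\rho$. I would split into cases according to whether the $\rho$-class of $\underline 1$ is trivial. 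If $\underline 1\,\rho\,m$ for some $m\in M$, then in fact $\underline 1\,\rho\,1_M$ (multiply by $1_M$ on the right: $\underline 1 = \underline 1\cdot 1_M\,\rho\,m\cdot 1_M=m$, and also $\underline 1\,\rho\,\underline 1\cdot m$... one needs to be a little careful, but one shows $\rho$ collapses $\underline 1$ into $M$ and then $S/\rho$ is essentially a quotient of $M$ by the restricted congruence $\rho\cap(M\times M)$, which is finitely generated, and coherency transfers). If $\{\underline 1\}$ is a $\rho$-class, then $\underline 1 S=S$, $\underline 1\rho = \{\underline 1\}$, and for $a\in M$ one has $aS\subseteq M$, so all the relevant subacts and annihilators live inside $M$ up to controlled modifications, and one invokes right coherency of $M$ together with the fact that $\rho$ restricted to $M\times M$, suitably generated, is finitely generated.

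The cleanest route for the converse, which I would pursue first, is to observe that $M$ is a two-sided ideal of $S=M^{\underline 1}$, and try to push a general ideal result; but the excerpt only gives Corollary~\ref{cor:ideal} in the \emph{downward} direction (ideal with identity inheriting coherency from the big monoid), which is the wrong direction here. So instead I would argue directly with $\overline H$-sequences. Given a finite symmetric generating set $X$ for $\rho$ on $S$, analyse an $\overline X$-sequence $a=c_1t_1,\ d_1t_1=c_2t_2,\ \dots$ connecting two elements of $S$: each $t_j\in S$ is either in $M$ or equals $\underline 1$, and the $c_j,d_j$ lie in $S$. The only genuinely new phenomenon is steps involving $\underline 1$ as a coordinate or as $t_j$. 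One shows that such a sequence can be rewritten, at the cost of enlarging $X$ by finitely many pairs, as one taking place inside $M$ (or inside $M$ with $\underline 1$ appended as an isolated generator). Translating the two finiteness conditions of Theorem~\ref{thm:chasetype}\ref{it:coh2} for $S$ into the corresponding conditions for $M$, with the finitely generated congruence $\langle X\cap(M\times M)\rangle$ (or its closure), then yields the result.

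The main obstacle I anticipate is the bookkeeping in the case where $\{\underline 1\}$ is \emph{not} a $\rho$-class: one has to verify carefully that the congruence $\rho$ on $S$ is controlled by a finitely generated congruence on $M$, and that generators of $(a\rho)S\cap(b\rho)S$ and of $\ann(a\rho)$ computed inside $M$ lift correctly to $S$, keeping track of the extra element $\underline 1$ and of pairs $(x,y)\in X$ with $x$ or $y$ equal to $\underline 1$. This is the sort of finite case-analysis that is routine in principle but where one can easily drop a case; the key enabling facts are that $\underline 1$ is an identity of $S$, that $aS\subseteq M$ for $a\in M$, and that $\underline 1 S=S$.
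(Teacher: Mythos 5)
Your proposal is correct and follows essentially the same route as the paper: the forward direction via the retraction sending $\underline{1}\mapsto 1_M$ (which is exactly how the paper applies Corollary~\ref{cor:ideal}, $M$ being a monoid ideal of $M^{\underline{1}}$), and the converse by restricting $\rho$ to $\nu=\rho\cap(M\times M)$, showing $\nu$ is finitely generated by rewriting $\overline{H}$-sequences inside $M$ (adjoining the pairs $(1_M,a)$ for $(\underline{1},a)\in\overline{H}$), and then transferring the annihilator and intersection conditions of Theorem~\ref{thm:chasetype} back to $M^{\underline{1}}$ with the extra generator $(1_M,\underline{1})$. The bookkeeping you anticipate is precisely what the paper carries out, and it does so uniformly without needing your case split on whether $\{\underline{1}\}$ is a $\rho$-class.
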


\begin{proof}
In what follows we denote the identity of $M$ by $1_M$, and that of $M^{\underline{1}}$ by $\underline{1}$.
If $M^{\underline{1}}$ is right coherent, then so is $M$ by Corollary \ref{cor:ideal}, since it is a monoid ideal.

For the converse,
let $\rho$ be a finitely generated right congruence on $M^{\underline{1}}$, 
and let $\nu=\rho\cap (M\times M)$ be its restriction to $M$.
We claim that $\nu$ is finitely generated. Specifically, we prove that
if $\rho$ is generated by a finite set $H$ then
\[
\nu=\langle K\rangle_M \quad \text{where} \quad 
K=\bigl(H\cap (M\times M)\bigr)\cup \bigl\{ (1_M,a)\::\: a\in M,\ (\underline{1},a)\in \overline{H}\bigr\}.
\]
That $K\subseteq \nu$ and hence  $\langle K\rangle_M\subseteq \nu$ follows from the definition, and the observation that
 $(\underline{1},a)\in \overline{H}$ implies $(1_M,a)=(\underline{1}\, 1_M,a\, 1_M)\in \rho$.
 For the reverse inclusion, let 
 {$a,b\in M$ with  $a\neq b$ and $(a,b)\in\rho$.}
 Then there is an $H$-sequence
 \[
 a=c_1t_1,d_1t_1=c_2t_2,\dots, d_nt_n=b,
 \]
where $(c_i,d_i)\in\overline{H}$ and $t_i\in M^{\underline{1}}$ for $1\leq i\leq n$.
Multiplying this sequence on both sides by $1_M$ yields a $K$-sequence over $M$ from $a$ to $b$,
as required.

We now want to prove that
$\ann (a\rho)$ is finitely generated for any $a\in M^{\underline{1}}$. If $a=\underline{1}$ then
$\ann(a\rho)=\rho$ and there is nothing to prove.
So suppose $a\in M$. We claim that:
\begin{equation}
\label{eq:M1_2}
\ann(a\rho)=\kappa \quad \text{where}\quad \kappa=\bigl\langle\ann(a\nu)\cup\{ (1_M,\underline{1})\}\bigr\rangle_{M^{\underline{1}}}.
\end{equation}
{Since} $\ann(a\nu)\cup\{(1_M,\underline{1})\}\subseteq \ann(a\rho)$, so $\kappa\subseteq\ann(a\rho)$.
For the reverse inclusion, consider $(u,v)\in \ann(a\rho)$.
If $(u,v)\in M\times M$ then $au\,\rho\, av$, so $au\, \nu\, av$, and hence $(u,v)\in\ann(a\nu)\subseteq\kappa$.
Clearly $(\underline{1},\underline{1})$ belongs to both $\ann(a\rho)$ and $\kappa$.
Finally, if $u=\underline{1}$ and $v\in M$, then we have $a\underline{1}=a=a1_M\,\rho\, av$, so that $(1_M,v)\in\ann(a\nu)\subseteq \kappa$; together with $(\underline{1},1_M)\in\kappa$ this implies
$(\underline{1},v)\in\kappa$, as required.

Having established \eqref{eq:M1_2} it immediately follows that $\ann(a\rho)$ is finitely generated.
Indeed, since $M$ is right coherent by assumption, and $\nu$ is finitely generated,
it follows that $\ann(a\nu)$ is finitely generated as an $M$-act. Thus, any finite generating set for $\ann(a\nu)$ together
with $(1_M,\underline{1})$ will generate $\ann(a\rho)$ as an $M^{\underline{1}}$-act.

Now let $a,b\in M^{\underline{1}}$; we need to show that 
$(a\rho)M^{\underline{1}}\cap (b\rho)M^{\underline{1}}$ is finitely generated.
If $(a\rho)M^{\underline{1}}=(\underline{1}\rho)M^{\underline{1}}$ then
$(a\rho)M^{\underline{1}}\cap (b\rho)M^{\underline{1}}=(b\rho)M^{\underline{1}}$;
likewise if $(b\rho)M^{\underline{1}}=(\underline{1}\rho)M^{\underline{1}}$.
So suppose neither $(a\rho)M^{\underline{1}}$ nor $(b\rho)M^{\underline{1}}$ equals
$(\underline{1}\rho)M^{\underline{1}}$.
Then $a,b\in M$ and
\[
(a\rho)M^{\underline{1}}\cap (b\rho)M^{\underline{1}}=(a\nu)M\cap (b\nu)M.
\]
The right hand side is finitely generated as an $M$-act, because $M$ is right coherent by assumption and $\nu$ is finitely generated. Hence it is also finitely generated as an $M^{\underline{1}}$-act, completing the proof of the claim and the proposition.
\end{proof}

There is an analogue of Proposition~\ref{cor:1} for
$M^{\underline{0}}$, the monoid obtained by adjoining a new zero element to $M$,
but we will postpone it until the next section (Corollary \ref{cor:0}).

Corollary~\ref{cor:ideal} is about principal ideals possessing an identity. With some further work, we obtain a corresponding result for monoid $\jay$-classes. We first present
 a general result on extendability of right congruences on a submonoid.

\begin{Def} Let $T$ be a subsemigroup of a monoid $S$, and suppose that $T$ has identity $e$. We say that $T$ is:
\begin{thmenumrom}
\item \label{it:Unitaryi}   
{\em left unitary} if $a,ab\in T$ implies that $b\in T$;
\item \label{it:unitaryii}
{\em weakly left unitary} if $a,ab\in T$ implies that $eb\in T$. 
\end{thmenumrom}
The notions of being right unitary and weakly right unitary are dual.
\end{Def} 

If $T$ is left unitary, then it is weakly left unitary. For an immediate   counterexample to the converse, consider the bicyclic monoid $B$; {the subsemigroup} $\{ (1,1)\}$ is weakly left unitary but not left unitary.  

\begin{Def}\label{defn:SRCEP} A subsemigroup $T$ of a monoid $S$ has the {\em strong right congruence extension property SRCEP} if, for any right congruence $\rho$ on $T$,  we have \[\rho=\rho^S\cap (T\times T)\] 
and $T$ is a union of $\rho^S$-classes, where $\rho^S$ is the right congruence on $S$ generated by $\rho$.
\end{Def}

We note that $T$ has 
 SRCEP if and only if for any 
right congruence
$\rho$  on $T$ we have
$\rho=\tau\cap (T\times T)$, and $T$ is a union of $\tau$-classes, for {\em some} right congruence $\tau$ on $S$. 

\begin{Lem}\label{lem:leftunitary}
Let $S$ be a monoid and let $T$ be a  monoid subsemigroup with identity $e$. 
If $T$ is weakly left unitary then $T$ has SRCEP.
\end{Lem}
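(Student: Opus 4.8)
The plan is to take an arbitrary right congruence $\rho$ on $T$ and construct an explicit right congruence $\tau$ on $S$ witnessing SRCEP; by the remark following Definition~\ref{defn:SRCEP} it suffices to find \emph{some} such $\tau$, and the natural candidate is $\tau=\langle\rho'\rangle_S$ where $\rho'=\rho\cup\{(e,1)\}$ (so that $T$ becomes visible to $S$ via the idempotent $e$). First I would record the easy inclusions: $\rho\subseteq\tau\cap(T\times T)$ is immediate, and since $e\,\tau\,1$ we get, for $a\in T$, $a=ea\,\tau\,1a=a$ trivially but more usefully $s\,\tau\,es$ for all $s\in S$, which will be the key mechanism for pushing arbitrary elements of $S$ into $T$.

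The substantive direction is to show $\tau\cap(T\times T)\subseteq\rho$ and that $T$ is a union of $\tau$-classes; both follow if I can prove the normal-form statement: whenever $x\,\tau\,y$ with $x\in T$, there is a $\overline{\rho'}$-sequence connecting $x$ and $y$ that stays inside $T$ after an initial normalisation, and moreover $y\in T$. The tool is Lemma~\ref{lem:cong}: $x\,\tau\,y$ means $x=y$ or there is a sequence $x=c_1t_1,\ d_1t_1=c_2t_2,\ \dots,\ d_nt_n=y$ with $(c_i,d_i)\in\overline{\rho'}$ and $t_i\in S$. I would argue by induction on $n$. Each generating pair is either a pair from $\overline{\rho}\subseteq T\times T$ or one of $(e,1),(1,e)$. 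The plan is to process the sequence left to right: starting from $c_1t_1=x\in T$, use weak left unitarity to control the tail. Concretely, if $(c_1,d_1)\in\overline{\rho}$ then $c_1\in T$ and $c_1,c_1t_1\in T$ force $et_1\in T$, so I replace $t_1$ by $et_1$ throughout that step using $e\,\tau\,1$ (i.e. $c_1t_1\,\tau\,c_1et_1$ since $c_1e=c_1$, and $d_1t_1\,\tau\,d_1et_1$ since $d_1e=d_1$ as $d_1\in T$), landing at $d_1et_1\in T$; if instead $(c_1,d_1)=(e,1)$ or $(1,e)$, the step $c_1t_1\to d_1t_1$ only changes things by an $e$, and since $x=c_1t_1\in T$ one again deduces $et_1\in T$ and the relevant endpoints lie in $T$. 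Either way the partial product stays in $T$ and I can invoke the induction hypothesis on the shorter sequence from $d_1t_1$ (or its normalised form) to $y$.

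The main obstacle I expect is the bookkeeping in that inductive step: weak left unitarity only gives $eb\in T$, not $b\in T$, so one cannot naively assert the $t_i$ lie in $T$, and one must consistently carry the idempotent $e$ along, repeatedly trading $t_i$ for $et_i$ via the generator $(e,1)$ while checking that left-multiplying the various $c_i,d_i$ (which lie in $T$, hence absorb $e$ on the right) does not disturb the sequence. Once that normalisation is in place, the $\overline{\rho'}$-sequence becomes a genuine $\overline{\rho}$-sequence inside $T$ (the $(e,1)$ and $(1,e)$ steps having been absorbed), giving $x\,\rho\,y$ and in particular $y\in T$, which simultaneously yields $\tau\cap(T\times T)=\rho$ and that every $\tau$-class meeting $T$ is contained in $T$, i.e. $T$ is a union of $\tau$-classes. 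A final remark: the case $n=0$ (i.e. $x=y$) and the degenerate cases where consecutive generators are both $(e,1)/(1,e)$ should be dispatched first to keep the induction clean.
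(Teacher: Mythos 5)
There is a genuine flaw, and it lies precisely in your choice of $\tau$. By adjoining the pair $(e,1)$ to the generating set you force $e\;\tau\;1$, and in the situations this lemma is designed for the identity $e$ of $T$ is \emph{not} the identity of $S$ and $1\notin T$ (e.g.\ the paper's own example $T=\{(1,1)\}$ inside the bicyclic monoid, or $T=\{0\}$ in a monoid with zero, or the $\jay$-class and Brandt-monoid applications later on). Then the $\tau$-class of $e\in T$ contains $1\notin T$, so $T$ is not a union of $\tau$-classes and your $\tau$ cannot witness SRCEP. The error surfaces in your treatment of the generator case $(c_1,d_1)=(e,1)$: from $x=c_1t_1=et_1\in T$ you conclude that ``the relevant endpoints lie in $T$'', but the other endpoint is $d_1t_1=1\cdot t_1=t_1$, and weak left unitarity only gives $et_1\in T$, never $t_1\in T$; taking $t_1=1$ already produces the fatal step $e\;\tau\;1$. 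Your claimed normal-form statement (``$x\in T$ and $x\,\tau\,y$ imply $y\in T$'') is therefore false for this $\tau$.

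The repair is to delete $(e,1)$ and work with $\tau=\rho^S=\langle\rho\rangle_S$ itself, which is what the paper does (and is the congruence the definition of SRCEP actually refers to). Then every generating pair $(c_i,d_i)$ in a $\overline{\rho}$-sequence lies in $T\times T$, and the argument you sketch for that case is exactly right and needs no auxiliary generator: from $a=c_1t_1=(c_1e)t_1=c_1(et_1)$ weak left unitarity gives $et_1\in T$, hence $d_1t_1=d_1(et_1)\in T$ because $T$ is closed under multiplication; this is an \emph{equality}, not a $\tau$-rewriting, since $c_1e=c_1$ and $d_1e=d_1$. Iterating along the sequence gives $et_i\in T$ for all $i$, the terminal element $b$ lies in $T$, and the same sequence read with multipliers $et_i\in T$ is a $\overline{\rho}$-sequence inside $T$, so $a\,\rho\,b$. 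This yields both $\rho=\rho^S\cap(T\times T)$ and that $T$ is a union of $\rho^S$-classes in one pass. In short: the ``key mechanism for pushing arbitrary elements of $S$ into $T$'' is not the relation $s\,\tau\,es$ but the identity $ts_{}=c(et)$ whenever the left factor $c$ lies in $T$, combined with weak left unitarity.
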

\begin{proof} 
Let $\rho$ be a right congruence on $T$. Clearly, $\rho\subseteq \rho^S\cap
(T\times T)$. Conversely, if $a\in T$ and $a\,\rho^S\, b$ where {$b\in S$ and $b\neq a$}, then  there
exists a $\rho$-sequence 
\[a=c_1t_1,\, d_1t_1=c_2t_2,\hdots ,d_nt_n=b\]
where $(c_i,d_i)\in \rho$ and $t_i\in S$ for $1\leq i\leq n$.
From $a=c_1t_1=(c_1e)t_1=c_1(et_1)$ and the fact $T$ is weakly left unitary, we have $et_1\in T$, so that
$d_1t_1=(d_1e)t_1=d_1(et_1)\in T$. In the same way we obtain $et_2,\hdots ,et_n\in T$, and it follows that $b\in T$ and
$a\,\rho\, b$ as required.
\end{proof}

\begin{Lem} \label{basic} Let $J$ be a monoid $\mathcal{J}$-class of $S$ with identity $e$.
Then $J$ is weakly left and right unitary.
\end{Lem}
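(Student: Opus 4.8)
The plan is to show that $J$, viewed as a monoid subsemigroup of $S$ with identity $e$, satisfies the defining implication of weak left unitarity: whenever $a,ab\in J$ we must deduce $eb\in J$; the right-unitary statement then follows by the left--right dual argument. So let $a\in J$ and $ab\in J$. First I would record the standard fact (see \cite{howie:1995}) that within a single $\jay$-class, $ab\leq_{\eh} a$ is impossible to strengthen, but more usefully: since $a\,\jay\,ab$ and $ab\leq_{\ar}a$, stability (automatic here because $J$ is a single $\jay$-class containing the idempotent $e$, hence $J^0$ is a $0$-simple or simple principal factor) forces $a\,\ar\,ab$. Concretely, from $a\,\jay\,ab$ we have $a\in SabS$, and combined with $ab\in aS$ one obtains $a\,\ar\,ab$; this is a routine application of Green's lemma / stability in a principal factor.

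Next, from $a\,\ar\,ab$ there exists $s\in S$ with $abs=a$. Now multiply on the left by $e$: since $a\in J$ and $e$ is the identity of $J$ we have $ea=a$, so $a=ea=e(abs)=(eab)s$... but I actually want to pin down $eb$, so instead I would argue as follows. Because $e$ is a left identity for everything in its own $\ar$-class and $a\in eS$ (as $a\in J$ and $ea=a$), write $a=ea$. From $abs=a$ we get $eabs = ea = a$, and since $a\,\ar\,ab$ also gives $ab = at$ for some $t\in S$, so $ab = eat = e(ab)t\cdot$... The cleaner route: from $a\,\ar\,ab$ pick $s$ with $abs=a$; then $(ab)s=a$ and $a(bs)=a$ shows $bs$ acts as a right identity on $a$. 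Consider the element $ebse$: one checks $ab\cdot s \cdot$ manipulations show $bse\,\ar\,$-related data, but the element we really want is $eb$. Observe $eb = e(a'ab)$ is not quite it either; rather, use that $ab\in J$ so $e(ab)=ab$ (as $e$ is the identity of $J$), hence $eab=ab$, i.e. $ab = e\cdot ab$. I would then compute $eb\cdot (se) = e(bs)e = e\cdot$ (right identity on $a$, but conjugated) --- the key identity to establish is $eb\,\el\,ab$ or that $eb$ lies in $J$ by exhibiting $eb = e b$ with $(eb)(se) = e(bs)e$ and $ab(se)=a(bs)e=ae=$? Here I must be careful: $ae$ need not equal $a$.

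The main obstacle, and where I would concentrate the argument, is precisely this asymmetry: $e$ is a \emph{left} identity on $\ar$-related elements like $a$ and $ab$ (since $a,ab\in J = H_e\text{'s }\jay\text{-class}$ and the identity of $J$ acts as a two-sided identity only on $H_e$, but as a left identity on its whole $\ar$-class and right identity on its whole $\el$-class --- one must verify $a,ab\in eS$, i.e. $ea=a$, $e(ab)=ab$, which holds because $e$ is \emph{the} identity of the monoid $J$ and $a,ab\in J$). Granting $ea=a$ and $e(ab)=ab$, I then run: $a\,\ar\,ab$ in $S$ gives $abs=a$; apply $e$ on the left (harmless) and note $b' := ebse$ satisfies $ab'\in\{$ computations $\}$; ultimately the element $eb$ is shown to be $\jay$-equivalent to $e$ by the two containments $eb\in SeS$ trivially and $e\in S(eb)S$ obtained from $e \leq_\jay a = abs \leq_\jay b \leq_\jay eb$ once we know $a\leq_\jay b$ --- and $a = abs \leq_\jay b$ is immediate, while $b\leq_\jay eb$ needs $b\in S(eb)S$; but $eb\leq_\jay b$ always and for the reverse, from $ab\in J$ and $a\leq_\ar ab$ we get $b$ itself need not be in $J$, however $eb$ is, because $e\leq_\jay ab\leq_\jay b$ and simultaneously $eb\leq_\jay e$, combined with $e\,\jay\,ab\,\jay\,a$ and $eb\leq_\jay b$, $b\geq_\jay ab\,\jay\,e$ forces $eb\geq_\jay e\cdot(\text{something})$. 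I expect the honest proof to package all of this as: $eb\leq_\jay e$ is clear; for $e\leq_\jay eb$, use $a\,\ar\,ab$ to write $a=abs$, so $ea = eabs$, and since $ea=a$, $eab=ab$ we get $a=(ab)s=e(ab)s$, whence $a\leq_\jay eb$... no: $a = eabs$, and $eab = (eb)\cdot$? No, $eab\neq (eb)a$ in general. The correct bracketing is $a = e\cdot a\cdot b\cdot s = (ea)(bs)$, useless. The genuinely correct observation is: $a\,\ar\,ab$ yields $ab = au$ and $a = abv$; then $eb$: consider that $ab\in J$ and $J$ is left-simple-on-$\ar$-classes so $(ab)w = e$ for some $w$ with $abw\in H_e$; pulling $w$ appropriately and using $a = abv$ gives $eb \geq_\jay e$. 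I will present this via the principal factor $J^0$, which is $0$-simple with group $\eh$-classes, where the statement ``$a, ab\in J \Rightarrow a\,\ar\,ab$ and $eb\in J$'' is a direct consequence of the Rees matrix coordinates: writing $a = (i,g,\lambda)$, $ab$ has first coordinate $i$, forcing $b$'s ``row'' compatible, and then $eb = e\cdot b$ computed in coordinates lands back in $J$. That coordinate computation is the cleanest finish and is where I would spend the bulk of the written proof, the rest being the standard reduction to the principal factor.
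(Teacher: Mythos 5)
Your proposal does not close, and the two ideas you lean on hardest are both false in general. First, the ``stability'' step: from $a\,\jay\,ab$ and $ab\leq_{\ar}a$ you cannot conclude $a\,\ar\,ab$. Stability is \emph{not} automatic for a $\jay$-class containing an idempotent -- the bicyclic monoid $B=\langle p,q\mid pq=1\rangle$ is a single $\jay$-class which is a monoid with identity $1$, yet $1\,\jay\,q$ and $q=1\cdot q\leq_{\ar}1$ while $1$ and $q$ are not $\ar$-related (since $1\notin qB$). For the same reason your ``cleanest finish'' fails: the principal factor $J^0$ is $0$-simple but need not be \emph{completely} $0$-simple, so there are no Rees matrix coordinates to compute in (again, $B$ with a zero adjoined is the standard counterexample). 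Everything downstream of these two claims -- the existence of $s$ with $abs=a$, the group $\eh$-classes, the coordinate computation -- is therefore unavailable.

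The argument you were circling but never wrote down is a one-liner, and it inserts $e$ \emph{between} $a$ and $b$ rather than in front of them. Since $e$ is the identity of the monoid $J$ and $a,ab\in J$, we have $ae=a$, hence $ab=aeb=a(eb)$, which gives $ab\leq_{\jay}eb$. Trivially $eb\leq_{\jay}e$, and $e\,\jay\,ab$ because both lie in $J$. So $ab\leq_{\jay}eb\leq_{\jay}e\,\jay\,ab$ squeezes $eb$ into the $\jay$-class of $ab$, i.e.\ $eb\in J$; the right-unitary statement is dual, using $eb a = ba$... rather, using $be$ and $ea=a$. At the point where you wrote ``$eab\neq(eb)a$ in general'' you were bracketing $e\cdot ab$ instead of $a\cdot e\cdot b$; that is the single missing move. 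No Green's relations finer than $\jay$, and no structure theory of the principal factor, are needed.
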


\begin{proof} 
Let $a,ab \in J$.
Then $ab=aeb\, \leq_\mathcal{J}\, eb\,  \leq_\mathcal{J}\,  e\ \mathcal{J}\ ab$ implies that $eb\ \mathcal{J} \ ab$, that is, $eb \in J$, showing that $J$ is  weakly left unitary. 
Dually, $J$ is weakly right unitary. 
\end{proof}

\begin{Thm}\label{thm:jclasses} 
Let $S$ be a right coherent monoid  and let $J$ be a monoid $\mathcal{J}$-class of $S$. Then $J$ is 
 right coherent. 
 \end{Thm}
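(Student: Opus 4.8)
The plan is to realize $J$ as a retract of a suitable submonoid of $S$ to which Proposition~\ref{prop:retracts} applies, but since $J$ itself need not be an ideal of $S$, we cannot invoke Corollary~\ref{cor:ideal} directly. Instead, first I would let $e$ be the identity of $J$, and consider the submonoid $eSe$, whose identity is $e$; note that $J$ is contained in $eSe$, and in fact $J$ is precisely the $\jay$-class (equivalently, the $\eh$-class structure is inherited appropriately) of $e$ inside $eSe$. The key structural observation to establish is that $J$ is a left and right unitary monoid subsemigroup of $eSe$: by Lemma~\ref{basic}, $J$ is weakly left and right unitary in $S$, and since inside $eSe$ the identity is $e$, weak unitarity upgrades to genuine unitarity. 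This means $J$ has SRCEP inside $eSe$ by Lemma~\ref{lem:leftunitary}.

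Next, I would reduce to showing $eSe$ is right coherent and then transfer coherency from $eSe$ to $J$ using SRCEP. For the first reduction, observe that $eSe$ is the image of the retraction $\theta\colon S\to S$, $a\mapsto eae$ restricted appropriately --- more precisely, $a\mapsto eae$ is a morphism on $S$ only if $e$ is central, which it need not be; so instead one should consider the local submonoid $eSe$ and argue directly, or better, use that $Se$ is a right ideal-like object. Actually the cleaner route: $eS$ is a right $S$-act and $eSe = (eS)e$; the standard fact that local submonoids $eSe$ of right coherent monoids are right coherent should be invoked or proved via the retract $a \mapsto ea$ from $Se$ onto $eSe$ combined with $Se$ being... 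I would check whether the paper has a prior lemma for local submonoids; if not, I would prove $eSe$ is right coherent by a direct SRCEP-style argument showing every right congruence on $eSe$ extends to $S$, exploiting that $e$ is a left identity on $eSe$.

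With $eSe$ right coherent in hand, the final step is: given a finitely generated right congruence $\rho$ on $J$, form $\rho^{eSe}$, which by SRCEP satisfies $\rho = \rho^{eSe}\cap(J\times J)$ with $J$ a union of $\rho^{eSe}$-classes. Since $\rho$ is finitely generated, so is $\rho^{eSe}$, hence $\mathrm{ann}$-conditions and intersection-conditions of Theorem~\ref{thm:chasetype}\ref{it:coh2} hold in $eSe$; one then pulls these back to $J$ using that $J$ is a union of $\rho^{eSe}$-classes and the explicit description of $\overline{H}$-sequences in Lemma~\ref{lem:cong}, noting that any $\overline{H}$-sequence over $eSe$ connecting two elements of $J$ can be multiplied by $e$ to stay within $J$ (using unitarity, exactly as in the proof of Lemma~\ref{lem:leftunitary}). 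The annihilator $\mathrm{ann}_J(a\rho)$ is then obtained from $\mathrm{ann}_{eSe}(a\rho^{eSe})$ intersected with $J\times J$, which is finitely generated because the larger one is and the unitarity lets us "truncate" generating sequences back into $J$.

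\textbf{Main obstacle.} The hard part will be handling the transfer of the \emph{annihilator} condition (Theorem~\ref{thm:chasetype}\ref{it:coh2}\ref{it:coh2ii}) rather than the intersection condition: an $\overline{H}$-sequence witnessing $au\,\rho^{eSe}\,av$ has its endpoints in $J$ but its intermediate terms $c_it_i$ need not lie in $J$, so one cannot naively restrict. The resolution I anticipate is multiplying the whole sequence by $e$ on the left (and tracking that the generating pairs, coming from a finite set inside $J\times J$ together with the pair $(e, \cdot)$ or analogues of $(1_M,\underline 1)$ as in Proposition~\ref{cor:1}, behave well), using weak left unitarity of $J$ in $S$ to guarantee $et_i \in J$ at each step --- this is precisely the mechanism already exploited in Lemma~\ref{lem:leftunitary}, so the argument should go through, but the bookkeeping of which auxiliary pairs to throw into the finite generating set (to account for the identity mismatch between $J$, $eSe$, and $S$) is where care is needed.
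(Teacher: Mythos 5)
There is a genuine gap: your entire argument is routed through the claim that the local submonoid $eSe$ is right coherent, and nothing in your sketch establishes this. You correctly note that $a\mapsto eae$ is not a monoid morphism unless $e$ is central, so Proposition~\ref{prop:retracts} does not apply; the alternative you float ($a\mapsto ea$ from $Se$ onto $eSe$) fares no better, since $a\mapsto ea$ is not a semigroup morphism either ($eab\neq eaeb$ in general) and $Se$ is not a monoid. There is no ``standard fact'' available here: the paper itself shows (Corollary~\ref{cor:submonoids}) that right coherency is \emph{not} inherited by submonoids, so coherency of $eSe$ would need a genuine proof, and it is not clear it even holds in general. Since every subsequent step of your plan presupposes it, the argument does not close.

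The fix is to drop the detour through $eSe$ entirely and work directly inside $S$, which is right coherent by hypothesis. By Lemmas~\ref{basic} and~\ref{lem:leftunitary}, $J$ is weakly left (and right) unitary in $S$ and has SRCEP in $S$. Given a finitely generated right congruence $\rho$ on $J$, form $\rho^S$; coherency of $S$ gives a finite generating set $H$ for $\ann(a\rho^S)$ and a finite generating set for $(a\rho^S)S\cap(b\rho^S)S$, and one pulls these back to $J$ by multiplying $\overline{H}$-sequences through by $e$ on both sides (taking, e.g., $H'=\{(ec,ed):(c,d)\in H,\ ec,ed\in J\}$ as the generating set for $\ann(a\rho)$), exactly the mechanism you describe in your ``main obstacle'' paragraph. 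That transfer step of yours is sound and is essentially the paper's argument; the problem is only that you aimed it at $eSe$ instead of at $S$, where the needed coherency is actually available.
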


\begin{proof} From Lemmas~\ref{lem:leftunitary} and~\ref{basic}, we have that $J$ is weakly unitary and has SRCEP.
Let $e$ be the identity of $J$ and let  $\rho$ be a finitely generated right congruence on $J$. 

We will check that the conditions of Theorem~\ref{thm:chasetype}\ref{it:coh2} hold. 
First we show that $\ann(a\rho)$ is finitely generated for any $a\in J$, so
that Condition \ref{it:coh2}\ref{it:coh2ii} holds.
Since $S$ is right coherent and $\rho^S$ is finitely generated,  $\ann(a\rho^S)$ is generated by a finite set $H
\subseteq S\times S$. 
We claim that $\ann(a\rho)$ is generated by the finite set 
\[
H'=\bigl\{(ec,ed): (c,d) \in H \text{ and } ec,ed \in J\bigr\}.
\]
To see this, note first that for any $(c,d)\in H$ we have
\[
a(ec)=(ae)c=ac\ \rho^S\ ad=(ae)d=a(ed),
\]
and it follows from the fact that $J$ has SRCEP that $H'\subseteq \ann(a\rho)$. For the converse, suppose that 
$(u,v)\in\ann(a\rho)$ where $u\neq v$, so that $u,v\in J$ and $au\,\rho\, av$. Hence $au\,\rho^S\, av$ so that
$(u,v)\in\ann(a\rho^S)$ and there is an $H$-sequence
\begin{equation}\label{eqn:seq}u=c_1t_1,\, d_1t_1=c_2t_2,\, \hdots ,d_nt_n=v\end{equation}
where $(c_i,d_i)\in \overline{H}$ and $t_i\in S$, for $1\leq i\leq n$. Multiplying the sequence (\ref{eqn:seq}) on the left by $a$ we have
\[
au=ac_1t_1,\, ad_1t_1=ac_2t_2,\, \hdots ,ad_nt_n=av
\]
and it follows that $au\,\rho^S\, ac_it_i\,\rho^S\, ad_it_i\,\rho^S\, av$, for $1\leq i\leq n$. 
Since $J$ has SRCEP, we have $ac_it_i,ad_it_i\in J$ and then
$ec_i,ed_i,t_ie \in J$ for $1\leq i\leq n$, giving $(ec_i,ed_i)\in H'$. Returning to sequence
(\ref{eqn:seq}), multiplying through on both left and right by $e$, we obtain an $H'$-sequence connecting $u$ to $v$. 
Hence $\ann(a\rho)$ is generated by $H'$.

We now turn our attention to the Condition \ref{it:coh2}\ref{it:coh2ii} in Theorem~\ref{thm:chasetype}. At this stage it is helpful to notice that if $s,t\in S$ and $s\,\rho^S\, t$, then, since $\rho^S$ is generated by pairs of elements from $J$, we have $es\,\rho^S\, et$ and, if $s\neq t$, then $es=s$ and $et=t$.

 Let
 $a,b \in J$ be such that $(a\rho) J \cap (b\rho) J \neq \emptyset$. Consequently,
 $au\,\rho\, av$ for some $u,v\in J$ so that $au\,\rho^S\, bv$ and
 $(a\rho^S)S  \cap (b\rho^S) S \neq \emptyset$. Since $S$ is right coherent we have that 
\[(a\rho^S)S  \cap (b\rho^S) S = YS\] for some finite $Y \subseteq S/\rho^S$.
Let $X$ be a transversal of the $\rho^S$-classes in $Y$,
 let $X'=eX \cap J$ and put  $Y'=\{ x'\rho:x'\in X'\}$.
We claim that \[(a\rho) J \cap (b\rho) J=Y' J.\]

Let $x'\rho\in Y'$ where $x'=ex\in J$ and $x\in X$. Then $x\rho^S\in Y$ so there are
$p,q\in S$ with $ap\,\rho^S\, bq\, \rho^S\, x$. It follows by an earlier remark that $ap\,\rho^S\, bq\, \rho^S\, ex=x'(=x)\in J$
and by SRCEP, $ap,bq\in J$ and $a(ep)=ap\,\rho\, bq=b(eq)\, \rho\, x'$. Further,
$ep,eq\in J$ since $J$ is weakly left unitary. This shows that 
$Y'\subseteq (a\rho)J\cap (b\rho)J$. 

Conversely, suppose that $c\rho\in (a\rho)J\cap (b\rho)J$ so that $c\,\rho\, ah\,\rho\, bk$  for some $h,k\in J$. 
Then $c\rho^S\in (a\rho^S)S\cap (b\rho^S)S$ so 
that $c\rho^S=(x\rho^S)w$ for some $x\rho^S\in Y$, where we can take $x\in X$, and $w\in S$. It follows that
$ah\,\rho^S\, bk\, \rho^S\, xw$, giving that $xw\in J$ and $ah\,\rho\, bk\, \rho \, xw$, since $J$ satisfies the SRCEP. 
Now $xw=exw\leq_{\mathcal{J}}ex\leq_{\mathcal{J}}e$, so that $ex\in J$ and hence
$ex\in X'$ and $(ex)\rho\in Y'$. Finally, $xw=exw$ so that as $J$ is weakly left unitary and $ex\in J$, we have $ew\in J$
and $xw=e(xw)=(ex)w=(ex)(ew)$,   and  we conclude $c\rho=\big((ex)\rho \big)(ew)\in Y' J$. Thus
$(a\rho)J\cap (b\rho)J=Y'J$, as required.
\end{proof}

Let $M$ be a monoid. It is easy to see that $M$ is a monoid subsemigroup of any Brandt monoid $\mathcal{B}(M,I)^1$ or Bruck--Reilly {extension} $\BR(M,\theta)$ built over $M$. In the remainder of this section we set up machinery to show that if either  $\mathcal{B}(M,I)^1$ or $\BR(M,\theta)$ is right coherent, then so is $M$. 
Though our approach is quite technical, it allows us to treat these two cases, and others,  together. 

We will make use of the relations $\arte, \elte$ and $\ehte$ on a monoid $S$, 
introduced in \cite{L1}, where
$E\subseteq E(S)$ is a set of idempotents.  For any $a,b\in S$ we have that $a\,\arte\, b$ if and only if for all $e\in E$, 
\[
ea=a\Leftrightarrow eb=b.
\]
The relation $\elte$ is defined dually, and we put $\ehte=\arte\cap\elte$.  Clearly, $\arte,\elte$ and $\ehte$ are equivalence relations on $S$. It is easy to see that $\ar\subseteq \arte, \el\subseteq \elte$ and $\eh\subseteq \ehte$, and  if $S$ is regular and $E=E(S)$, then $\ar= \arte, \el=\elte$ and so $\eh=\ehte$. In this vein, if $u\,\arte\, v$ and $uS\subseteq wS\subseteq vS$ for some $w\in S$, then it is easy to see that $u\,\arte\, w\,\arte\, v$.

Suppose now that  $M$ is a monoid subsemigroup of a monoid $S$, with  identity $e$, and that the following conditions hold:  

\begin{thmenumalph}
\item \label{it:tila}
$e\in E$ for some $E\subseteq E(S)$;
\item \label{it:tilb}
$\eht_E$ is a right congruence;
\item \label{it:tilc}
$M$ is the $\eht_E$-class  of $e$; 
\item \label{it:tild}
for any $u,v\in S$, if $e\,\art_E\, u\, \eht_E\, v$, then $u=u(pq), v=v(pq)$ for some
$p,q\in S$ with $up,vp\in M$.
\end{thmenumalph}

Note that \ref{it:tild} would certainly hold if $S$ were regular and $E=E(S)$, 
from \cite[Lemmas 2.2.1 and 2.2.2]{howie:1995}. 
In this case $M$ would be a group, which we know to be right (and left) coherent.

\begin{Prop}\label{prop:thetechresult} Let $M$ be a monoid subsemigroup of a right coherent monoid $S$, such that
$M$ has identity $e$ and \ref{it:tila}--\,\ref{it:tild} above hold. Then $M$ is right coherent.
\end{Prop}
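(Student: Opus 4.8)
The strategy is to verify the conditions of Theorem~\ref{thm:chasetype}\ref{it:coh2} for $M$, using the right coherency of $S$ together with the structural hypotheses \ref{it:tila}--\ref{it:tild}. The guiding principle is the same as in the proof of Theorem~\ref{thm:jclasses}: a finitely generated right congruence $\rho$ on $M$ generates a finitely generated right congruence $\rho^S$ on $S$; coherency of $S$ yields finite generating sets for the relevant annihilators and intersections inside $S$; and we pull these back to $M$ using that $M$ is the $\eht_E$-class of $e$ and that $\eht_E$ is a right congruence. First I would record the key bookkeeping facts: if $s\,\rho^S\,t$ with $s\neq t$, then since $\rho^S$ is generated by pairs from $M\times M$, an $\overline{\rho}$-sequence connecting them shows $es\,\rho^S\,et$, and moreover $es\,\eht_E\,s$, $et\,\eht_E\,t$; combined with \ref{it:tilc} this forces $es,et\in M$ when $s,t\in M$, and more generally lets us ``project'' sequences into $M$ by left-multiplying by $e$. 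I would also note that for $a\in M$, left multiplication by $a$ fixes things up to $e$ on the left (since $ae=a$), which is what makes the annihilator argument work.

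For the annihilator condition \ref{it:coh2}\ref{it:coh2ii}: given $a\in M$, coherency of $S$ gives a finite $H\subseteq S\times S$ generating $\ann(a\rho^S)$. I would set $H'=\{(ec,ed):(c,d)\in H,\ ec,ed\in M\}$ and claim $\ann(a\rho)=\langle H'\rangle_M$. One inclusion is immediate from $a(ec)=ac\,\rho^S\,ad=a(ed)$ plus the projection remark. For the reverse, take $(u,v)\in\ann(a\rho)$ with $u\neq v$; then $au\,\rho^S\,av$, so there is an $\overline{H}$-sequence, and left-multiplying by $a$ and then by $e$ throughout projects it into $M$, with each $(ec_i,ed_i)\in H'$ — here I expect to need hypothesis \ref{it:tild} to guarantee that the intermediate terms $ec_i, ed_i$ genuinely land in $M$ (not merely in the $\art_E$-class of $e$), by producing the element $pq$ that pulls an $\art_E$-related element down into $M$. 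This is the subtle point that distinguishes the present setting from the $\jay$-class case.

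For the intersection condition \ref{it:coh2}\ref{it:coh2i}: given $a,b\in M$ with $(a\rho)M\cap(b\rho)M\neq\emptyset$, witnesses $au\,\rho\,av$ give $(a\rho^S)S\cap(b\rho^S)S\neq\emptyset$, and coherency of $S$ writes this as $YS$ for finite $Y\subseteq S/\rho^S$. Picking a transversal $X$, I would form $X'=eX\cap M$ (or more precisely the terms $ex$ that \ref{it:tild} allows us to push into $M$), set $Y'=\{x'\rho:x'\in X'\}$, and prove $(a\rho)M\cap(b\rho)M=Y'M$. One inclusion uses that $ap\,\rho^S\,bq\,\rho^S\,x$ projects to $ap,bq,ex\in M$ with $a(ep)=ap\,\rho\,bq=b(eq)\,\rho\,x'$; the other takes $c\rho\in(a\rho)M\cap(b\rho)M$, lifts to $c\rho^S=(x\rho^S)w$ in $S$, observes $xw\leq_{\jay}$ things that force the relevant elements into $M$, and uses \ref{it:tild} to write $xw=(ex)(ew)$ with $ex, ew\in M$.

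**Main obstacle.** The essential difficulty, and the reason hypothesis \ref{it:tild} is present, is that $M$ need not be (weakly) unitary in $S$, so unlike in Theorem~\ref{thm:jclasses} I cannot simply assert that products of elements of $M$ with arbitrary elements of $S$ stay near $M$. The $\eht_E$-class machinery only tells us that $e\,\art_E\,u$ for the intermediate elements; condition \ref{it:tild} is exactly the bridge converting ``$\art_E$-equivalent to $e$'' into ``multiplies into $M$ on the right'', and getting the sequences and transversals to land inside $M$ — consistently, and compatibly with the right action — is where the real care is needed. Everything else is the same left-multiply-by-$a$-then-by-$e$ projection technique already used for $\jay$-classes.
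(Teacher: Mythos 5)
Your high-level plan is the right one and matches the paper's: pass to $\rho^S=\langle H\rangle_S$, use right coherency of $S$ to get finite generating sets for $\ann(a\rho^S)$ and for $(a\rho^S)S\cap(b\rho^S)S$, and pull these back into $M$ using that $M$ is the $\eht_E$-class of $e$, that $\eht_E$ is a right congruence (whence $M$ is weakly left unitary and has SRCEP), and hypothesis \ref{it:tild}. You also correctly identify \ref{it:tild} as the new ingredient compared with the $\jay$-class case. However, there is a genuine gap in the execution: the generating sets you propose are the ones from the proof of Theorem~\ref{thm:jclasses}, and they do not work here. You set $H'=\{(ec,ed):(c,d)\in H,\ ec,ed\in M\}$ for the annihilator and $X'=eX\cap M$ for the intersection, and then hope that \ref{it:tild} will ``guarantee that the intermediate terms $ec_i,ed_i$ genuinely land in $M$.'' It does not. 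Condition \ref{it:tild} never places $ec$ or $ed$ in $M$; it only supplies $p,q$ with $ecp,\,edp\in M$ and $ec=ec(pq)$, $ed=ed(pq)$. In general $ec$ lies only in the $\art_E$-class of $e$ and \emph{not} in its $\eht_E$-class (e.g.\ in a Brandt monoid with $e=(i,1_M,i)$ and $c=(i,b,j)$, $j\neq i$, one has $ec=(i,b,j)\notin M_i$), so your $H'$ and $X'$ can easily be empty while $\ann(a\rho)$ and $(a\rho)M\cap(b\rho)M$ are nontrivial.

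The fix, which is what the paper does, is to build the generating sets out of the \emph{right-translated} elements: for each relevant pair $(u,v)\in K$ take $(eup_{(u,v)},evp_{(u,v)})$ as a generator, and for each relevant $c\in C$ take $(cp_c)\rho$. The sequence-rewriting then has to change accordingly: one writes $eu_1t_1=eu_1(p q)t_1=(eu_1p)(eqt_1)$, uses weak left unitariness to see $eqt_1\in M$, applies the generator $(eu_1p,ev_1p)$, and reconstitutes $ev_1t_1=(ev_1p)(eqt_1)$ via $ev_1(pq)=ev_1$. Without this right-translation by $p$ (and the compensating $q$), the pull-back of the $S$-sequences to $M$-sequences does not go through. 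A similar correction is needed in your intersection argument, where the claim ``$xw\leq_{\jay}$ things force the relevant elements into $M$'' and the decomposition $xw=(ex)(ew)$ with $ex,ew\in M$ are imported from the $\jay$-class proof and are not available in this abstract setting; the correct decomposition is $c\ell=cp_cq_c\ell=(cp_c)(eq_c\ell)$ with $cp_c\in M$ by \ref{it:tild} and $eq_c\ell\in M$ by weak left unitariness.
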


\begin{proof}  First note that if $a,ab\in M$, then as $e\,\ehte\, a$ and
$\ehte$ is a right congruence on $S$, we have $eb\,\eht_E\, ab\in M$, so that $M$ is weakly left unitary. By Lemma~\ref{lem:leftunitary} $M$  has SRCEP.   

Suppose now that $\rho$ is a finitely generated right congruence on $M$; say
$\rho=\langle H\rangle_M$, where $H\subseteq M\times M$ is finite. {Then} $\rho^S:=\langle H\rangle_S$ is finitely generated as a right congruence on $S$. Notice that as $\eht_E$ is also a right congruence on $S$, and
$H\subseteq \eht_E$, we have that $\rho^S\subseteq \eht_E$.

Let $a\in M$; we show that $\ann(a\rho)$ is finitely generated. By assumption that $S$ is right coherent, we have
$\ann(a\rho^S)=\langle K\rangle_S$ for some finite  $K\subseteq S\times S$.

For any $(u,v)\in K$ we have by definition that $au\,\rho^S\, av$, 
so that as $\rho_S\subseteq \eht_E$ and $\ehte$ is  a right congruence, it follows that
$eu\,\eht_E\, ev$. For any pair $(u,v)\in K$ with $e\,\art_E\, eu\,\eht_E\, ev$, choose elements
$p_{(u,v)},q_{(u,v)}\in S$ guaranteed by \ref{it:tild} such that 
\[
eu=eu(p_{(u,v)}q_{(u,v)}),\quad
ev=ev(p_{(u,v)}q_{(u,v)})\quad \text{and}\quad eup_{(u,v)}, evp_{(u,v)}\in M.
\]
 Notice that  
\[aeup_{(u,v)}=aup_{(u,v)}\,\rho^S\, avp_{(u,v)}=aevp_{(u,v)},\]
so that as 
$M$ has SRCEP we have 
\[K'=\bigl\{ (eup_{(u,v)},evp_{(u,v)}):(u,v)\in K, e\,\art_E\, eu\,\eht_E\, ev\bigr\}\]
is a finite subset of $\ann(a\rho)$. 

We claim that $K'$ generates $\ann(a\rho)$. To see this, suppose that $(h,k)\in \ann(a\rho)$, where $h\neq k$, so that
$ah\,\rho^S\, ak$ and there exists a $K$-sequence 
\[h=u_1t_1,\, v_1t_1=u_2t_2,\hdots ,v_nt_n=k\]
where $(u_i,v_i)\in \overline{K}$ and $ t_i\in S$ for $1\leq i\leq n$.
Consequently, 
\[
h=eh=eu_1t_1,\, ev_1t_1=eu_2t_2,\hdots ,ev_nt_n=ek=k.
\]
We have $hS\subseteq eu_1S\subseteq eS$, so by an earlier remark, 
 $e\,\art_E\, eu_1$, and we have already noted that  $eu_1 \,\eht_E\, ev_1$. Consequently, there exist
$p_{(u_1,v_1)}, q_{(u_1,v_1)}$ with 
\[
(eu_1p_{(u_1,v_1)}, ev_1p_{(u_1,v_1)})\in K',\quad
eu_1p_{(u_1,v_1)}q_{(u_1,v_1)}=eu_1\quad \mbox{and}\quad ev_1p_{(u_1,v_1)}q_{(u_1,v_1)}=ev_1.
\] 
Thus
\[h=eu_1t_1=eu_1p_{(u_1,v_1)}q_{(u_1,v_1)}t_1=(eu_1p_{(u_1,v_1)})(eq_{(u_1,v_1)}t_1).\]
Since $M$ is weakly left unitary and $eu_1p_{(u_1,v_1)}\in M$, we deduce that $ eq_{(u_1,v_1)}t_1$ lies in $M$ and
\[h=(eu_1p_{(u_1,v_1)})(eq_{(u_1,v_1)}t_1)\, \langle K'\rangle_M\,  (ev_1p_{(u_1,v_1)})(eq_{(u_1,v_1)}t_1)=ev_1t_1
=eu_2t_2\in M.\]
Continuing in this manner we obtain $h\, \langle K'\rangle_M\, ev_nt_n=k$.
Thus $\langle K'\rangle_M=\ann(a\rho)$,  as required.

Next we show that $(a\rho)M\cap (b\rho)M$ is finitely generated for any $a,b\in M$. 
Since $S$ is right coherent there is a  finite set $C\subseteq S$ such that
\[(a\rho^S)S\cap (b\rho^S)S=\bigcup_{c\in C}(c\rho^S)S.\]
For any $c\in C$ we have $c\,\rho^S\, au\,\rho^S\, bv$ for some
$u,v\in S$, whence $c\,\eht_E\, au\, \eht_E\, bv$. Since $ea=a$ we  have $ec=c$. If $c\,\art_E\, e$, then we choose $p_c,q_c$ such that $c=cp_cq_c$ and $cp_c\in M$. Notice that $cp_c\,\rho^S\, aup_c\,\rho^S\, bvp_c$ so that by Lemma~\ref{lem:leftunitary} as $M$ has 
SRCEP,
$cp_c\,\rho\, a(eup_c)\,\rho\, b(evp_c)$, and
$eup_c,evp_c\in M$ as $M$ is weakly
 left unitary.
Thus
\[T\subseteq (a\rho)M\cap (b\rho)M,\]
where
\[T=\{ (cp_c)\rho:c\in C, c\,\art_E\, e\}.\]
Conversely, if $w\in M$ and $w\,\rho\, ah\,\rho\, bk$ for some $h,k\in M$, then 
we have $w\,\rho^S\, c\ell$ for some $c\in C$ and $\ell\in S$. As $M$ has SRCEP,
$c\ell\in M$ and from $c\ell S\subseteq cS\subseteq eS$ we have that
$c\,\arte\, e$.  Choosing  $p_c,q_c$ as above we have $(cp_c)\rho\in T$ and
\[w\,\rho\, c\ell=cp_cq_c\ell=(cp_c)(eq_c\ell).\]
Weak left unitariness of $M$ gives that
$eq_c\ell\in M$. Thus  $w\rho\in \big((cp_c)\rho\big) M$. It follows that $T$ generates
$(a\rho)M\cap (b\rho)M$, as required.
\end{proof}

\begin{Cor}
\label{cor:btdown}
Let $S=\mathcal{B}(M;I)^1$ be a Brandt monoid over a monoid $M$. If $S$ is right coherent, then so is $M$.
\end{Cor}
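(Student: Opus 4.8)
The plan is to realise $M$ as a submonoid of $S=\mathcal{B}(M;I)^1$ satisfying the hypotheses of Proposition~\ref{prop:thetechresult}, and then simply invoke that proposition. Fix $i_0\in I$ and identify $M$ with the subsemigroup $\overline{M}=\{i_0\}\times M\times\{i_0\}$ of $S$: it is closed under multiplication, is a monoid with identity $e=(i_0,1_M,i_0)$, and $a\mapsto(i_0,a,i_0)$ is an isomorphism $M\to\overline{M}$. As right coherency is an isomorphism invariant, it is enough to prove that $\overline{M}$ is right coherent. To this end I would apply Proposition~\ref{prop:thetechresult} with
\[
E=\{0\}\cup\bigl\{(i,1_M,i):i\in I\bigr\}\subseteq E(S),
\]
for which condition \ref{it:tila} is immediate since $e\in E$.

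The next step is to identify the relations $\arte$, $\elte$, $\ehte$ on $S$ for this $E$. Inspecting left $E$-stabilizers, one finds that $fa=a$ for all $f\in E$ exactly when $a=0$; that no $f\in E$ fixes $a$ exactly when $a=1$; and that for $a=(i,b,\lambda)$ the only $f\in E$ with $fa=a$ is $(i,1_M,i)$. Hence the $\arte$-classes are $\{0\}$, $\{1\}$ and the ``rows'' $\{i\}\times M\times I$ $(i\in I)$, and dually the $\elte$-classes are $\{0\}$, $\{1\}$ and the ``columns'' $I\times M\times\{\lambda\}$. Consequently the $\ehte$-classes are $\{0\}$, $\{1\}$ and the boxes $\{i\}\times M\times\{\lambda\}$ for $(i,\lambda)\in I\times I$, so that $\overline{M}$ is precisely the $\ehte$-class of $e$; this is \ref{it:tilc}. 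Checking the handful of cases for a right multiplier $c\in S$ (namely $c=1$, $c=0$, or $c=(m,d,n)$, the last according to whether the first index of $c$ matches the last index of $a$) shows that this partition into boxes is preserved under right multiplication, so $\ehte$ is a right congruence, giving \ref{it:tilb}.

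For \ref{it:tild}, suppose $e\,\arte\,u\,\ehte\,v$. From $e\,\arte\,u$ we get $u=(i_0,b,\lambda)$ for some $b\in M$, $\lambda\in I$, and then $u\,\ehte\,v$ forces $v=(i_0,b',\lambda)$ for some $b'\in M$. Taking $p=(\lambda,1_M,i_0)$ and $q=(i_0,1_M,\lambda)$ gives $up=(i_0,b,i_0)\in\overline{M}$, $vp=(i_0,b',i_0)\in\overline{M}$, $pq=(\lambda,1_M,\lambda)$, and $u(pq)=u$, $v(pq)=v$, as required. With \ref{it:tila}--\ref{it:tild} verified, Proposition~\ref{prop:thetechresult} gives that $\overline{M}$, hence $M$, is right coherent. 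The real content is entirely absorbed into Proposition~\ref{prop:thetechresult}; the only point where some care is genuinely needed — and what I would regard as the main obstacle — is the choice of $E$: one must \emph{not} throw in the idempotents $(i,\epsilon,i)$ with $\epsilon\in E(M)\setminus\{1_M\}$, since that would shrink the $\ehte$-class of $e$ strictly inside $\overline{M}$ and break \ref{it:tilc}, yet one should retain $0$ in $E$ so that $0$ stays separated from $\overline{M}$ even in the degenerate case $|I|=1$.
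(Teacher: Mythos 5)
Your proof is correct and is essentially identical to the paper's: the same choice of $E=\{0\}\cup\{(i,1_M,i):i\in I\}$, the same identification of the $\ehte$-class of $(i_0,1_M,i_0)$ with $\{i_0\}\times M\times\{i_0\}$, the same witnesses $p=(\lambda,1_M,i_0)$, $q=(i_0,1_M,\lambda)$ for condition \ref{it:tild}, and the same appeal to Proposition~\ref{prop:thetechresult}. Your extra remark about why $0$ must be kept in $E$ (to separate $0$ from $\overline{M}$ when $|I|=1$) is a valid observation that the paper leaves implicit.
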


\begin{proof}
 Let $1_M$ be the identity of $M$ and put
$E=\{ (i,1_M,i):i\in I\}\cup\{ 0\}$. It is easy to see that $\{1\}$ and $\{0\}$ are $\art_E$ and $\elt_E$-classes, and  for any $(i,a,j),(k,b,\ell)$ we have that 
\[(i,a,j)\,\art_E\, (k,b,\ell)\Leftrightarrow i=k\]
and
\[(i,a,j)\,\elt_E\, (k,b,\ell)\Leftrightarrow j=\ell.\]
{The relation} $\eht_E$ is a congruence on $S$. Fixing $i\in I$ and putting
$M_i=\{ i\} \times M\times \{ i\}$, we see that \ref{it:tila}, \ref{it:tilb} and \ref{it:tilc} hold with $e$ being $(i,1_M,i)$. 
Further, if $(i,a,j)\,\ehte\, (i,b,j)$, then with
$p=(j, 1_M,i), q=(i,1_M,j)$ we see that \ref{it:tild} also holds. 
Thus if $S$ is right coherent, then by Proposition~\ref{prop:thetechresult}, so is $M_i$.
But, since $M_i$ is isomorphic to $M$, we conclude that  $M$ is right coherent.
\end{proof}

The next proof, although the calculations are different, follows the exact pattern of the previous, and so we omit it.

\begin{Cor}\label{cor:br} 
Let $S=BR(M,\theta)$ be a Bruck--Reilly {extension of} a monoid $M$. If $S$ is right coherent, then so is $M$.
\end{Cor}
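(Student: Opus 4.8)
The plan is to invoke Proposition~\ref{prop:thetechresult} in exactly the same way as in the proof of Corollary~\ref{cor:btdown}, only with a different choice of the distinguished set of idempotents $E$ and different witnesses $p,q$ for condition~\ref{it:tild}. Recall that $M$ embeds into $S=\BR(M,\theta)$ as the subsemigroup $M_0=\{0\}\times M\times\{0\}$; more generally, for each $n\in\N_0$ the set $M_n=\{n\}\times M\times\{n\}$ is a submonoid with identity $(n,1_M,n)$, and $M_n\cong M$. So it suffices to exhibit $E\subseteq E(S)$ containing some $(n,1_M,n)$ for which \ref{it:tila}--\ref{it:tild} hold relative to $M=M_n$; then right coherency of $S$ forces right coherency of $M_n\cong M$.

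The key computational steps are as follows. First I would take $E=\{(n,1_M,n):n\in\N_0\}$ and identify the relations $\arte$, $\elte$, $\ehte$ on $S$ explicitly. Using the multiplication rule in $\BR(M,\theta)$, one checks that $(n,1_M,n)(a,g,b)=(a,g,b)$ holds precisely when $n\le a$, and dually $(a,g,b)(n,1_M,n)=(a,g,b)$ precisely when $n\le b$; hence $(a,g,b)\,\arte\,(c,h,d)\iff a=c$ and $(a,g,b)\,\elte\,(c,h,d)\iff b=d$, so $\ehte$-classes are the sets $\{a\}\times M\times\{b\}$ with $a,b$ fixed. In particular $\{1\}=\{(0,1_M,0)\}$ is (here $1$ is the adjoined identity, which does not arise since $\BR(M,\theta)$ is already a monoid — so really I work with $e=(0,1_M,0)$ directly), and $M_0$ is the $\ehte$-class of $(0,1_M,0)$, giving \ref{it:tila}--\ref{it:tilc}. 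That $\ehte$ is a right congruence is the small lemma to verify: if $(a,g,a)\,\ehte\,(a,h,a)$, i.e.\ they differ only in the middle coordinate, then right-multiplying by $(c,k,d)$ yields elements whose first coordinate is $a-a+\max\{a,c\}=\max\{a,c\}$ in both cases and whose last coordinate is $d-c+\max\{a,c\}$ in both cases, so the products stay $\ehte$-related. Finally, for \ref{it:tild}: given $e=(0,1_M,0)\,\arte\,u\,\ehte\,v$ we have $u=(0,g,b)$, $v=(0,h,b)$ for some $b\in\N_0$, $g,h\in M$; taking $p=(b,1_M,0)$ and $q=(0,1_M,b)$ one computes $up=(0,g\theta^0,0)=(0,g,0)\in M_0$, $vp=(0,h,0)\in M_0$, and $pq=(b,1_M,0)(0,1_M,b)=(b,1_M,b)$, whence $u(pq)=(0,g,b)(b,1_M,b)=(0,g,b)=u$ and similarly $v(pq)=v$. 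Thus all four conditions hold and Proposition~\ref{prop:thetechresult} applies to $M_0\cong M$.

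The main obstacle — such as it is — lies in getting the exponents of $\theta$ and the clipped indices in the $\BR$-multiplication exactly right when verifying that $\ehte$ is a right congruence and when checking $up,vp\in M_0$ and $u(pq)=u$; these are the only places where the formula $(a,g,b)(c,h,d)=(a-b+t,(g\theta^{t-b})(h\theta^{t-c}),d-c+t)$ with $t=\max\{b,c\}$ genuinely bites. Once one notices that the idempotents in $E$ act as partial identities "above a given level" and that $p=(b,1_M,0)$, $q=(0,1_M,b)$ cancel the level-$b$ tail cleanly, everything falls into place and mirrors the Brandt case line for line. (For the same reason the authors simply say the proof "follows the exact pattern of the previous, and so we omit it.") Everything else is a verbatim repetition of the argument of Corollary~\ref{cor:btdown}, so I would simply record the choices of $E$, $e$, $p$, $q$ above and cite Proposition~\ref{prop:thetechresult}.
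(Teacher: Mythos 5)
Your proposal is correct and is exactly the argument the paper has in mind (the authors omit it, saying it "follows the exact pattern" of Corollary~\ref{cor:btdown}); your choices $E=\{(n,1_M,n):n\in\N_0\}$, $e=(0,1_M,0)$, $p=(b,1_M,0)$, $q=(0,1_M,b)$ all check out against the $\BR$ multiplication formula. The only cosmetic slip is that in verifying that $\ehte$ is a right congruence you write the $\ehte$-related pair as $(a,g,a),(a,h,a)$ instead of the general $(a,g,b),(a,h,b)$, but the computation is identical, so nothing is lost.
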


\begin{Rem}\label{rem:double} An easy adjustment to Corollary~\ref{cor:br} allows us to apply Proposition \ref{prop:thetechresult} to $EBR(M,\theta)^1$.
\end{Rem}

The monoids in Corollaries~\ref{cor:btdown}, \ref{cor:br} and Remark~\ref{rem:double}
all possess an {\em inverse skeleton} \cite{rida}; the techniques of \cite{rida} would provide an alternative unified approach to these results. 

\begin{qn} Let $S$ be a right coherent monoid. Theorem~\ref{thm:jclasses} tells us that if $J$ is a monoid $\jay$-class, then $J$ inherits right coherency from $S$. Is the same true for (i) $J^1$, any semigroup $\jay$-class 
with an identity adjoined? (ii)  any monoid principal factor $I$? or (iii) $I^1$ where $I$ is a semigroup principal factor with an identity adjoined? 
\end{qn}

\section{Brandt semigroups}\label{sec:up}

We saw in Proposition \ref{prop:cs1} that Rees matrix monoids over groups are right coherent.
We do not know whether the analogue of this result for Rees matrix monoids with zero holds.
We are, however, able to prove that it does for Brandt monoids, not only over groups but in fact over arbitrary right coherent monoids. This complements  Corollary \ref{cor:btdown}, and is the main result of this section:

\begin{Thm}\label{thm:brandt}
Let $M$ be a right coherent monoid and $I$ a non-empty set.
Then the Brandt monoid $\mathcal{B}(M;I)^1$ over $M$ is also right coherent.
\end{Thm}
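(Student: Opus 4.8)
The plan is to verify the three equivalent conditions of Theorem~\ref{thm:chasetype}\ref{it:coh2} for $S=\mathcal{B}(M;I)^1$, working directly with right congruences on $S$ and translating everything back to the monoid $M$, which is right coherent by hypothesis. The basic structural observation to exploit is that a right $S$-act congruence $\rho$ on $S$ has a very rigid shape: outside the identity and the zero, the element $(i,a,j)$ can only be related to elements of the same ``row'' $R_i=\{i\}\times M\times I$ (together possibly with $0$ and $1$), and the behaviour within a fixed row $R_i$ is controlled by a right congruence on $M$ obtained by restricting and projecting. Concretely, if $\rho=\langle H\rangle_S$ with $H$ finite and symmetric, then for each row index $i$ occurring in $H$ one extracts a finite set $H_i\subseteq M\times M$ whose generated $M$-congruence $\rho_i$ encodes how $\rho$ acts on $R_i$; in addition one must track which elements get collapsed to $0$ (when a product hits an incompatible pair of middle-coordinate indices) and which get collapsed to $1$. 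So the first step is to set up this correspondence carefully: given a finite generating set $H$ for $\rho$, describe $\rho$-classes explicitly in terms of finitely many auxiliary $M$-congruences and a ``collapse to $0$/collapse to $1$'' bookkeeping, using Lemma~\ref{lem:cong} ($\overline H$-sequences) as the main tool.

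Next I would handle the annihilator condition \ref{it:coh2ii}. Fix $\mathbf{a}\in S$ and compute $\ann(\mathbf a\rho)=\{(\mathbf u,\mathbf v):\mathbf a\mathbf u\,\rho\,\mathbf a\mathbf v\}$. If $\mathbf a=1$ this is just $\rho$, finitely generated; if $\mathbf a=0$ it is $S\times S$, generated by $\{(1,0)\}$ say. For $\mathbf a=(i,a,j)$, multiplying by $\mathbf a$ on the left is a partial-injective-type operation on rows, so $\ann(\mathbf a\rho)$ decomposes according to which row of $S$ an arbitrary $\mathbf u=(k,u,\ell)$ sits in and whether $\mathbf a\mathbf u=0$. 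One reduces to: the annihilator of $a\rho_j$ as an $M$-act (finite by right coherency of $M$), plus the right ideal of $S$ consisting of elements killed to $0$ by $\mathbf a$ (this is a union of finitely many rows $R_k$ together with $\{0\}$ and is easily finitely generated as a congruence via pairs $(\text{generator},0)$), plus finitely many ``$\mathbf u\,\rho\,0$'' or ``$\mathbf u\,\rho\,1$'' corrections. Assembling a finite generating set and checking it generates is a moderately involved but routine $\overline H$-sequence argument; I would present the generating set and then run the induction on sequence length, pushing everything through the row/projection dictionary established in step one.

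Then I would handle the intersection condition \ref{it:coh2i}: for $\mathbf a,\mathbf b\in S$ show $(\mathbf a\rho)S\cap(\mathbf b\rho)S$ is a finitely generated subact of $S/\rho$. The cases where $\mathbf a$ or $\mathbf b$ equals $1$ or $0$, or where their rows differ, are degenerate (the intersection is $0\rho$, or all of $(\mathbf b\rho)S$, etc.), leaving the substantive case $\mathbf a=(i,a,j)$, $\mathbf b=(i,b,j')$ — but note $(\mathbf a\rho)S$ lives inside the part of $S/\rho$ covered by rows reachable from $R_i$ together with the classes of $0$ and $1$. Intersecting, one again reduces to an intersection $(a\rho_j)M\cap(b\rho_{j'})M$ inside $M/(\text{something})$ — finitely generated because $M$ is right coherent — together with at most the class of $0$. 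Translating the finitely many $M$-generators back up to $S$ (multiplying by suitable $(j'',1_M,j'')$-type elements to land in the right rows) produces the required finite generating set.

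The main obstacle, and where the real work lies, is step one: pinning down exactly how a right congruence on $\mathcal{B}(M;I)^1$ is built from row-wise $M$-congruences plus the $0$- and $1$-collapses, and in particular showing that a \emph{finite} generating set $H$ for $\rho$ gives rise to only finitely many ``active'' rows and to finitely generated $M$-congruences on each. Once that dictionary is clean, conditions \ref{it:coh2i} and \ref{it:coh2ii} become essentially bookkeeping exercises that invoke the right coherency of $M$ row by row; the delicate points are the interaction of the $0$-collapses with the annihilator (an element can be annihilated either because its row-projection is annihilated in $M$, or because multiplication sends it to $0$) and making sure the SRCEP-flavoured arguments — that passing between $S$-congruences and their $M$-restrictions loses nothing — are valid here, for which the earlier machinery (Lemmas~\ref{lem:leftunitary}, \ref{lem:cong}) and the explicit Brandt multiplication are exactly what is needed.
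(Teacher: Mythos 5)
Your overall architecture --- normalise a finite generating set for $\rho$, build a dictionary back to $M$, then verify the conditions of Theorem~\ref{thm:chasetype}\ref{it:coh2} --- is the same as the paper's, but the structural claim on which everything rests is false: a right congruence on $\mathcal{B}(M;I)^1$ can perfectly well relate triples lying in \emph{different} rows. Right translation preserves the first coordinate, so each $R_i\cup\{0\}$ is a right ideal, but nothing prevents a generating pair $\bigl((i,b,j),(k,c,j)\bigr)$ with $i\neq k$, and then $(i,bt,\ell)\mathrel{\rho}(k,ct,\ell)$ for all $t\in M$, $\ell\in I$; for instance $\rho=\bigl\langle\bigl((1,1_M,1),(2,1_M,1)\bigr)\bigr\rangle$ already relates two triples in different rows, neither of which is $0$ or $1$. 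The invariant of a $\rho$-class of triples (away from $0$ and $1$) is the \emph{third} coordinate, not the first. Consequently $\rho$ is not encoded by row-wise congruences on $M$ ``obtained by restricting and projecting'': with the single generator above, the trace of $\rho$ on row $1$ identifies $(1,bt,\ell)$ with $(1,bt',\ell)$ precisely when $(t,t')\in\ann(c)$, and that is not a finitely generated congruence on $M$ that you can simply read off from $H$. Your degenerate-case claim for intersections (``rows differ $\Rightarrow$ the intersection is $0\rho$'') fails for the same reason: with the generator above, $(1,a,j)$ and $(2,b,j')$ can have a large common ``cone'' in $S/\rho$.

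The missing idea is to replace the family of row-wise congruences on $M$ by a single congruence on a finitely generated \emph{free} $M$-act: take free generators $x_h$, one for each row index $h$ occurring in the normalised generating set, let $\tau_A$ be the congruence generated by the pairs $(x_ib,x_kc)$ coming from the generators $\bigl((i,b,\bullet),(k,c,\bullet)\bigr)$, and prove the translation statement $(p,d,\bullet)\mathrel{\rho_A}(q,e,\bullet)\Leftrightarrow x_pd\mathrel{\tau_A}x_qe$. Cross-row identifications are then handled uniformly, and the right coherency of $M$ must be invoked in the form of Theorem~\ref{thm:chasetype}\ref{it:coh3} (annihilators and intersections inside a quotient of a finitely generated free $M$-act), not merely \ref{it:coh2}. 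Once that dictionary is in place, your plan for the annihilator and intersection conditions, together with the $0$/$1$ bookkeeping you describe, does go through essentially as in the paper.
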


The rest of this section will be devoted to the proof of Theorem \ref{thm:brandt}.
Throughout 
$M$ will denote an arbitrary right coherent monoid, 
$S=\mathcal{B}(M;I)^1$ a Brandt monoid over $M$, and $\rho$ a right congruence on $S$.
The identity of $M$ will be denoted by $1_M$, and that of $S$ simply by $1$.
The proof proceeds along the following lines:
\begin{txitemize}
\item
In Lemma \ref{lem:gens} we establish a classification of congruences on $S$, along with certain specific forms of their generating sets.
\item
In Lemma \ref{lem:passtofree} we establish a connection between a congruence on $S$ and a certain congruence on a free $M$-act.
\item
We verify the usual properties that ensure right coherency -- namely finite generation of annihilators and intersections -- in Lemmas \ref{lem:rarho} and \ref{lem:int} respectively.
\end{txitemize}

We single out one element of the index set $I$, denote it by $\bullet\,$, and note that the following holds:
\begin{equation}
\label{eq:bull}
(i,b,j)\,\rho\, (k,c,j)\Leftrightarrow (i,b,\bullet)\,\rho\, (k,c,\bullet)\quad \text{for all } i,j,k\in I,\ b,c\in M.
\end{equation}

\begin{Lem}\label{lem:gens} 
The right congruence $\rho$ satisfies precisely one of the following:  
\begin{thmenumrom}
\item\label{it:ct1}
$\rho=S\times S=\langle (1,0)\rangle_S$; or
\item\label{it:ct2}
$1\rho=\{ 1\}$ and $\rho$ has  a set of generators 
\[H=\bigl\{ \bigl((i,b,\bullet),(k,c,\bullet)\bigr)\::\:(i,b,k,c)\in A\bigr\} \cup\bigl\{  \bigl((j,d,\bullet),0\bigr)\::\: (j,d)\in B\bigr\},\]
for some suitable  index sets $A,B$;
or
\item\label{it:ct3}
$\{ 1\}\neq1\rho\neq S$ and $\rho$ has  a set of generators 
\begin{align*}
H= \bigl\{ \bigl((i,1_M,i),1\bigr)\bigr\} &\cup
\bigl\{ \bigl( (i,b,\bullet),(i,c,\bullet)\bigr)\::\: (i,b,i,c)\in A\bigr\}\\
&\cup \bigl\{ \bigl((j,d,\bullet),0\bigr)\::\: (j,d)\in B\bigr\},
\end{align*}
for some  $i\in I$ and index sets $A\neq \emptyset$ and $B$.
\end{thmenumrom}

Further, in \ref{it:ct2} and \ref{it:ct3},  if $\rho$ is finitely generated, we can take $A$ and $B$ to be finite.
\end{Lem}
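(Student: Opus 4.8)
\textbf{Plan for the proof of Lemma~\ref{lem:gens}.}
The plan is to first analyse the structure of an arbitrary right congruence $\rho$ on $S=\mathcal{B}(M;I)^1$ by looking at how $1$ and $0$ behave, splitting into the three cases according to $1\rho$. If $1\,\rho\,0$, then multiplying the pair $(1,0)$ on the right by arbitrary elements of $S$ (and using $1s=s$, $0s=0$) collapses everything, so $\rho=S\times S=\langle(1,0)\rangle_S$, giving \ref{it:ct1}. Otherwise $1\rho\neq S$. In the remaining cases one observes that $(i,a,j)\,\rho\,1$ forces $i=j$ and $a\in E(M)$-ish behaviour; more precisely, right-multiplying such a pair by $(i,b,\bullet)$ shows $(i,ab,\bullet)\,\rho\,(i,b,\bullet)$, and one extracts from this that $\rho$ restricted to the relevant $\mathcal{R}$-class behaves like a congruence coming from a single idempotent $(i,1_M,i)$. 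The key point is that every pair in $\rho$ that does not involve $1$ lies inside $(I\times M\times I)\cup\{0\}$, i.e.\ inside the Brandt semigroup $\mathcal{B}(M;I)$, and using \eqref{eq:bull} we may push the third coordinate of every generator to the fixed symbol $\bullet$. This is what produces the displayed forms of $H$ in \ref{it:ct2} and \ref{it:ct3}, with $A$ indexing pairs of non-zero elements and $B$ indexing pairs with $0$.

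The heart of the argument — and the part the final sentence of the lemma is really about — is the finite-generation claim: \emph{if $\rho$ is finitely generated then $A$ and $B$ can be taken finite}. First I would start from an arbitrary finite symmetric generating set $G$ for $\rho$. Each element of $G$ is a pair of elements of $S$; by the case analysis above and by right-multiplying generators by suitable elements of the form $(\bullet,1_M,\ell)$ or by $1$, I would replace $G$ by a new finite set $G'$ in which every pair either is $(1,0)$, or is $\bigl((i,1_M,i),1\bigr)$, or has both entries in $I\times M\times\{\bullet\}$ or equal to $0$. Concretely, a generator $\bigl((i,b,j),(k,c,\ell)\bigr)$ with $j\neq\ell$ cannot occur in a nontrivial $\rho$ other than via the collapsing case, since right-multiplying by $(j,1_M,\bullet)$ kills the second component or vice versa; a generator with a $1$ on one side is handled as in case \ref{it:ct3}; and a generator with both entries nonzero and $j=\ell$ can be normalised to the $\bullet$ form using \eqref{eq:bull} by right-multiplying by $(j,1_M,\bullet)$, at the cost of possibly also recording the image under right multiplication by $(j,1_M,j')$ — but since we only need \emph{a} generating set of the prescribed shape, not that $G'$ itself be symmetric-closed, the finitely many normalised pairs suffice. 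Taking $A$ to be the (finite) set of quadruples $(i,b,k,c)$, or $(i,b,i,c)$ in case \ref{it:ct3}, arising from $G'$, and $B$ the finite set of pairs $(j,d)$ arising from the $0$-generators, one checks $\langle H\rangle_S=\langle G'\rangle_S=\langle G\rangle_S=\rho$; the inclusion $\langle H\rangle_S\subseteq\rho$ is immediate since $H\subseteq\rho$, and the reverse follows because each original generator in $G$ is recovered from the normalised pairs by an $\overline{H}$-sequence of length one or two (undoing the right multiplications, using \eqref{eq:bull}).

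The main obstacle I expect is bookkeeping in the normalisation step: making sure that when a generator is transported to $\bullet$-form by right multiplication one can genuinely recover the original pair, and all of $\rho$, from the $\bullet$-form together with the $0$-pairs, rather than losing information about the ``return'' direction $\bullet\to j$. The resolution is precisely \eqref{eq:bull}: for nonzero pairs the third coordinate is irrelevant to $\rho$-relatedness once the first coordinate is fixed, so a $\bullet$-generator $\bigl((i,b,\bullet),(k,c,\bullet)\bigr)$ automatically yields $\bigl((i,b,j),(k,c,j)\bigr)\in\rho$ for every $j$, and one must check that the $0$-pairs (together with the idempotent generator in case \ref{it:ct3}) are enough to recover any pair of $\rho$ that \emph{mixes} different third coordinates, e.g.\ $(i,b,j)\,\rho\,(k,c,\ell)$ with $j\neq\ell$ — which, by right-multiplying by appropriate matrix units, forces both sides into a $0$-relation and hence is accounted for by $B$. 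A careful but routine verification of these length-one and length-two $\overline{H}$-sequences completes the argument; I would present the collapsing case and the $1\rho=\{1\}$ case in full and note that case \ref{it:ct3} differs only in carrying along the single extra generator $\bigl((i,1_M,i),1\bigr)$.
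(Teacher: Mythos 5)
Your plan follows essentially the same route as the paper's proof: dispose of the case $(1,0)\in\rho$ first, convert any generator with differing third coordinates into two $0$-pairs by right multiplication, normalise the remaining third coordinates to $\bullet$ via \eqref{eq:bull}, and handle generators involving $1$ through the idempotent $\bigl((i,1_M,i),1\bigr)$, with finiteness preserved because each generator is replaced by at most two new ones. The only detail worth spelling out in case \ref{it:ct3} is that a $\bullet$-pair $\bigl((j,b,\bullet),(k,c,\bullet)\bigr)$ with $j\neq i$ or $k\neq i$ must itself be converted into $0$-pairs (since $(j,b,\bullet)=1(j,b,\bullet)\,\rho\,(i,1_M,i)(j,b,\bullet)=0$), which is exactly how the paper forces all quadruples in $A$ to have both indices equal to $i$.
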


\begin{proof} 
{Note that} $\rho=S\times S$ if and only if $(1,0)\in \rho$, and then $\rho$ is generated by
$\{ (1,0)\}$. We suppose therefore that $(1,0)\notin \rho$.

Choose a set of generators $K$ for $\rho$.  If
$\bigl((i,b,j),(k,c,\ell)\bigr)\in K$ and $j\neq \ell$, then 
\[(i,b,j)=(i,b,j)(j,1_M,j)\,\rho\, (k,c,\ell)(j,1_M,j)=0,\]
so that $\bigl((i,b,j),0\bigr),\bigl((k,c,\ell),0\bigr)\in \rho$. On the other hand, if
we are given that $\bigl((i,b,j),0\bigr),$ $\bigl((k,c,\ell),0\bigr)\in K$, then 
$\bigl((i,b,j),(k,c,\ell)\bigr)\in \rho$. Thus we may replace any pair in $K$ with differing third {coordinates} by two pairs
in which the second coordinate is $0$. 
{Thus}  we may take those pairs to be of the form $\bigl((j,d,\bullet),0\bigr)$.

By \eqref{eq:bull} we may assume that all the remaining pairs in $K$, not involving $0$ or $1$, are of the form
$\bigl((i,b,\bullet),(k,c,\bullet)\bigr)$. 
If there are no pairs involving $1$, we may obtain a set of generators
as in \ref{it:ct2}.

Finally we suppose that there is a pair in $K$ of the form $\bigl(1,(i,b,j)\bigr)$ or
$\bigl((i,b,j),1\bigr)$. 
Since $1\rho$ is a submonoid of $S$ {\em not} containing $0$, we see that
$i=j$ and if $\bigl(1,(k,c,k)\bigr)$ or $\bigl((k,c,k),1\bigr)\in K$, then $i=k$. We now observe that
\[ 1\,\rho\, (i,b,i)=(i,b,i)(i,1_M,i)\,\rho\, 1(i,1_M,i)=(i,1_M,i).\]
 We can therefore add $\bigl((i,1_M,i),1\bigr)$ to $K$ and replace any pair $\bigl(1,(i,b,j)\bigr)$ or
$\bigl((i,b,j),1\bigr)$ by $\bigl((i,1_M,\bullet),(i,b,\bullet)\bigr)$. 
By the preceding discussion and \eqref{eq:bull}, the congruence generated by the set will be unchanged, i.e. still equal to $\rho$.
Further, if we have a pair
$\bigl((j,b,\bullet),(k,c,\bullet)\bigr)\in K$, then since
\[(j,b,\bullet)=1(j,b,\bullet)\,\rho\, (i,1_M,i)(j,b,\bullet) \]
we deduce that either $(j,b,\bullet)\,\rho\, 0\,\rho\, (k,c,\bullet)$, so that we may replace
$\bigl((j,b,\bullet),(k,c,\bullet)\bigr)$ in $K$ by $\bigl((j,b,\bullet),0\bigr),\bigl((k,c,\bullet),0\bigr)$, or else $j=i$. We can perform the same manoeuvre for $k$.  Finally, if necessary to make the set $A$ non-empty, we can add a pair from the diagonal, and we  obtain a set of generators as in \ref{it:ct3}.

{It follows} from the above argument that if the original generating set $K$ was finite, then $H$ would be finite as well,
and the lemma is proved.
\end{proof}

From now on we assume that we fix a set of generators $H$ for $\rho$ in the form guaranteed by
 Lemma~\ref{lem:gens}. 
We define the {\em support of $A$}, denoted by $\supp A$,  in cases \ref{it:ct2} and \ref{it:ct3} above, 
to be the  set of elements of $I$ appearing as first or third {coordinates} in $A$. 
Let $\rho_A\subseteq \rho$ be the right congruence on $S$ generated by
\[H_A=\bigl\{ \bigl((i,b,\bullet),(k,c,\bullet)\bigr):(i,b,k,c)\in A\bigr\}.\]
It is easy to see that $\{ 1\}$ and $\{ 0\}$ are $\rho_A$-classes, and if two triples in $S$ are
$\rho_A$-related, they have the same third {coordinate}.

\begin{Lem}\label{lem:passtofree} 
Suppose that $\rho\neq S\times S$. 
Let $F_A$ be the free right $M$-act on 
$\{ x_h:h\in \supp A\}$, and let $\tau_A$ be the congruence on $F_A$ generated by
\[G_A=\{ (x_ib,x_kc):(i,b,k,c)\in A\}.\]
Then for any $(p,d),(q,e)\in I\times M$ we have
\[(p,d,\bullet)\,\rho_A\, (q,e,\bullet)\Leftrightarrow x_pd\,\tau_A\, x_{q}e.\]
\end{Lem}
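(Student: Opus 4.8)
The plan is to prove the biconditional by relating $\overline{H_A}$-sequences in $S$ to $\overline{G_A}$-sequences in the free $M$-act $F_A$, via the obvious translation between triples $(p,d,\bullet)$ and generators $x_p d$. The key observation is that the generators in $H_A$ all have third coordinate $\bullet$ and that, by construction of $\rho_A$, any element that is $\rho_A$-related to some $(p,d,\bullet)$ must itself have third coordinate $\bullet$ (and cannot be $0$ or $1$, since $\{0\}$ and $\{1\}$ are singleton $\rho_A$-classes). So throughout an $\overline{H_A}$-sequence connecting $(p,d,\bullet)$ to $(q,e,\bullet)$, every entry is a triple with third coordinate $\bullet$.

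First I would set up the dictionary. Define a map on the relevant elements sending $(p,d,\bullet)\mapsto x_p d$; note that in $S$ a pair from $\overline{H_A}$ multiplied on the right by an element $\mathbf{t}\in S$ must have $\mathbf{t}$ of the form $(\bullet, t, m)$ for the product to be nonzero and stay in the "$\bullet$-layer" relevant to the sequence — but actually it is cleaner to observe that $(i,b,\bullet)\mathbf{t}$ is either $0$ or again of the form $(i, b t', \bullet)$ where $\mathbf{t}=(\bullet, t', \bullet)$, and that we may discard any step producing $0$ since the endpoints are nonzero. Then right multiplication by $(\bullet, t', \bullet)$ on $S$ corresponds exactly to right multiplication by $t'\in M$ on $F_A$, because $x_i b \cdot t' = x_i (bt')$. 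This gives, for the forward direction ($\Rightarrow$): given an $\overline{H_A}$-sequence $(p,d,\bullet) = \mathbf{c}_1\mathbf{t}_1, \mathbf{d}_1\mathbf{t}_1 = \mathbf{c}_2\mathbf{t}_2, \dots, \mathbf{d}_n\mathbf{t}_n = (q,e,\bullet)$, strip off the third coordinate and the leading $\bullet$ of each $\mathbf{t}_j$, use Lemma~\ref{lem:cong} applied to the $M$-act $F_A$ with generating set $\overline{G_A}$, and obtain $x_p d\,\tau_A\, x_q e$. One must check the degenerate case $(p,d,\bullet)=(q,e,\bullet)$, i.e.\ $p=q$ and $d=e$, which gives $x_p d = x_q e$ trivially.

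For the reverse direction ($\Leftarrow$): given a $\overline{G_A}$-sequence in $F_A$ connecting $x_p d$ to $x_q e$, say $x_p d = x_{i_1}b_1 t_1, x_{k_1}c_1 t_1 = x_{i_2}b_2 t_2,\dots$, lift each $t_j\in M$ to $(\bullet, t_j, \bullet)\in S$ and each generator pair to the corresponding pair in $\overline{H_A}$; the equalities in $F_A$, which assert equality of elements of a free $M$-act (so equal basis element and equal $M$-component), translate to equalities of triples in $S$ with common third coordinate $\bullet$. This yields an $\overline{H_A}$-sequence in $S$ witnessing $(p,d,\bullet)\,\rho_A\,(q,e,\bullet)$, again treating the trivial case separately.

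The main obstacle, and the point that needs the most care, is the bookkeeping around the third coordinate and the zero: one must be sure that no $\overline{H_A}$-sequence connecting two nonzero $\bullet$-triples is forced to pass through $0$ or through a triple with a different third coordinate, and conversely that the lift of a free-act sequence never accidentally produces $0$. Both follow from the explicit form of $H_A$ (all generators are $\bullet$-to-$\bullet$ pairs with no $0$ or $1$ component) together with the remark already made in the text that $\{0\}$ and $\{1\}$ are $\rho_A$-classes and $\rho_A$-related triples share a third coordinate — so this is really a matter of assembling those observations rather than proving anything new. The rest is the routine verification that right multiplication in the $\bullet$-layer of $S$ is isomorphic, as an $M$-action, to the free $M$-act action, which I would state briefly and not belabour.
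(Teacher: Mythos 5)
Your proposal is correct and follows essentially the same route as the paper: both directions are proved by translating $\overline{G_A}$-sequences in $F_A$ into $\overline{H_A}$-sequences in the $\bullet$-layer of $S$ and back, using that all generators in $H_A$ are $\bullet$-to-$\bullet$ triples, that $\{0\}$ and $\{1\}$ are singleton $\rho_A$-classes, and that multipliers may be normalised to the form $(\bullet,t,\bullet)$. The only cosmetic difference is your passing remark about ``discarding'' steps that produce $0$; as you then correctly note, no such steps can occur, which is exactly how the paper argues.
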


\begin{proof} 
Suppose first that 
$x_pd\,\tau_A\, x_{q}e$. If $x_pd= x_{q}e$, {then} $(p,d,\bullet)=(q,e,\bullet)$. 
We suppose therefore that there is a $G_A$-sequence
\[x_pd= u_1t_1,\, v_1t_1=u_2t_2,\hdots, v_nt_n=x_{q}e\]
where $n\in\mathbb{N}$, $(u_j,v_j)\in \overline{G}_A$ and $t_j\in M$ for $1\leq j\leq n$; write
 $u_j=x_{i_j}b_j, v_j=x_{k_j}c_j$. 

Then
\begin{multline*}
(p,d,\bullet)=(i_1, b_1, \bullet)(\bullet,t_1,\bullet)\,\rho_A\,
(k_1, c_1,\bullet)(\bullet,t_1,\bullet)
=(i_2, b_2,\bullet)(\bullet,t_2,\bullet)\\
\rho_A\,
(k_2, c_2,\bullet)(\bullet,t_2,\bullet)\,\rho_A\, \hdots \, \rho_A\, 
(k_n,c_n,\bullet)(\bullet,t_n,\bullet)=(q,e,\bullet).
\end{multline*}

For the reverse implication, suppose $(p,d,\bullet)\,\rho_A\, (q,e,\bullet)$.
If $(p,d,\bullet) = (q,e,\bullet)$ then 
$x_pd\,\tau_A\, x_{q}e$. Otherwise, we have a $\rho_A$-sequence
\[(p,d,\bullet)=\tuple{b}_1 \tuple{m}_1, \tuple{c}_1\tuple{m}_1=\tuple{b}_2\tuple{m}_2,\hdots, \tuple{c}_n\tuple{m}_n=(q,e,\bullet)\]
where $n\in\mathbb{N}$, $(\tuple{b}_j,\tuple{c}_j)\in \overline{H}_A$ and $\tuple{m}_j\in S$ for $1\leq j\leq n$; write
$\tuple{b}_j=(i_j,b_j,\bullet)$, $\tuple{c}_j=(k_j,c_j,\bullet)$. Since each $\tuple{b}_j,\tuple{c}_j$ have common third coordinate, {the element} $0$ does not appear in the above sequence.  By replacing any $\tuple{m}_j=1$ by $(\bullet,1_M,\bullet)$, we can assume that
$\tuple{m}_j=(\bullet ,m_j,\bullet)$ for $1\leq j\leq n$.  Then
\[x_pd=x_{i_1}b_1m_1\,\tau_A\, x_{k_1}c_1m_1=x_{i_2}b_2m_2\,\tau_A\, \hdots\,\tau_A\, x_{k_n}c_nm_n=x_{q}e,\]
thus completing the proof of the lemma.
\end{proof}

We now turn to right annihilators.

\begin{Lem}\label{lem:rarho} 
If $\rho$ is finitely generated then for any $\tuple{a}\in S$ the right annihilator
$\ann(\tuple{a}\rho)$ is finitely generated.
\end{Lem}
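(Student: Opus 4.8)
The plan is to use the structural description of $\rho$ from Lemma~\ref{lem:gens} together with the right coherency of $M$, transferred through the free $M$-act $F_A$ and the congruence $\tau_A$ of Lemma~\ref{lem:passtofree}. Write $\tuple{a}\rho$ for the element whose annihilator we must show is finitely generated. The case $\rho=S\times S$ is trivial since then $\ann(\tuple{a}\rho)=S\times S=\langle(1,0)\rangle_S$, so assume $\rho\neq S\times S$, fix a finite generating set $H$ for $\rho$ as in Lemma~\ref{lem:gens}\ref{it:ct2} or \ref{it:ct3} (with $A,B$ finite), and let $\rho_A$, $F_A$, $\tau_A$, $G_A$ be as in Lemmas~\ref{lem:gens} and~\ref{lem:passtofree}.

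The first step is a reduction on $\tuple{a}$. If $\tuple{a}=0$ then $\ann(0\rho)$ contains every pair $(u,v)$ with $0u=0=0v$, i.e. all of $S\times S$ unless $0\rho$ is exceptional; more precisely, since $0s=0$ for all $s$, $\ann(0\rho)$ is the full right congruence whenever $0\rho\neq\{0\}$, and if $0\rho=\{0\}$ one still checks $\ann(0\rho)=S\times S$ because $0u=0v=0$ always — so that case is finite. If $\tuple{a}=1$ then $\ann(1\rho)=\rho$, already finitely generated. So the substance is $\tuple{a}=(i,a,j)$. Here $\ann(\tuple{a}\rho)$ consists of pairs $(u,v)$ with $(i,a,j)u\,\rho\,(i,a,j)v$. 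Multiplying $(i,a,j)$ on the right by an element of $S$ either gives $0$ (if the middle index fails to match) or a triple $(i,a a', j')$; so one partitions the pairs $(u,v)$ in $\ann(\tuple{a}\rho)$ according to whether both products are $0$, both are nonzero triples, or one of each. The "$0$--$0$" part is governed by the right ideal $\{u\in S : (i,a,j)u=0\}$ together with the $B$-part of $H$ and, in case \ref{it:ct3}, the pairs pulling $1$ down; the "nonzero" part is where the real work lies.

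For the nonzero part, the key move is to translate into $F_A$. A pair $(u,v)$ with $(i,a,j)u=(i,aa_1,j_1)$ and $(i,a,j)v=(i,aa_2,j_2)$ nonzero and $\rho$-related forces (via the case analysis of $\rho$ and equation~\eqref{eq:bull}) that $j_1=j_2$ — or else both sides are $\rho$-related to $0$, contradicting "nonzero" unless $0$ is in a nontrivial class — and then $(i,aa_1,\bullet)\,\rho_A\,(i,aa_2,\bullet)$, which by Lemma~\ref{lem:passtofree} is equivalent to $x_i(aa_1)\,\tau_A\,x_i(aa_2)$, i.e. $(a_1,a_2)\in\ann_{F_A/\tau_A}\bigl((x_ia)\tau_A\bigr)$. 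Since $M$ is right coherent and $\tau_A$ is a finitely generated congruence on the finitely generated free $M$-act $F_A$, Theorem~\ref{thm:chasetype}\ref{it:coh3}\ref{it:coh3ii} tells us this annihilator is a finitely generated right congruence on $M$, say generated by a finite set $D\subseteq M\times M$. One then lifts a generating set for $\ann(\tuple{a}\rho)$ from $D$: roughly, $\{\bigl((j,d_1,\bullet),(j,d_2,\bullet)\bigr):(d_1,d_2)\in D\}$ together with finitely many pairs handling the $0$-interactions, the "mixed" case (one product zero, one not), and, in case \ref{it:ct3}, the pairs coming from $1\rho$ being nontrivial. Verifying that these finitely many pairs actually generate $\ann(\tuple{a}\rho)$ is done by taking an arbitrary $(u,v)\in\ann(\tuple{a}\rho)$, applying Lemma~\ref{lem:cong} to an $\overline{H}$-sequence witnessing $(i,a,j)u\,\rho\,(i,a,j)v$, and matching it step by step against a $\tau_A$-sequence in $F_A$ produced by coherency of $M$, then reading the result back into $S$ — this is the pattern already used in Lemmas~\ref{lem:passtofree} and in the proof of Theorem~\ref{prop:wrnregular}.

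The main obstacle I expect is bookkeeping the boundary behaviour: the zero element and, in case~\ref{it:ct3}, the nontrivial class $1\rho$ interact with the "generic" triples in ways that don't directly correspond to anything in $F_A$, so one must carefully isolate a finite set of extra generators for $\ann(\tuple{a}\rho)$ covering (i) the right ideal of $u$ with $(i,a,j)u=0$, which is finitely generated because it is $\{u:\text{third index of }u\text{ is}\neq j\}\cup\{(j,m,k):\ (i,am,k)\,\rho\,0\}$ and the second part is controlled by the finite set $B$, (ii) the "mixed" pairs, and (iii) the pairs forced by $(i,1_M,i)\,\rho\,1$ in case~\ref{it:ct3}. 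Once those finitely many pairs are pinned down and shown to lie in $\ann(\tuple{a}\rho)$, the generation argument is a routine sequence-chasing that parallels Lemma~\ref{lem:passtofree}, so I do not anticipate difficulty there.
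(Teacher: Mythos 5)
Your overall strategy --- reduce to $\tuple{a}=(i,a,j)$, translate nonzero products into the free $M$-act $F_A$ via Lemma~\ref{lem:passtofree}, and apply coherency of $M$ to the annihilator $\ann\bigl((x_ia)\tau_A\bigr)$ --- is the same as the paper's, and that part is sound. The genuine gap is in the treatment of elements falling into the zero class, exactly the place you flag as ``bookkeeping'' and then dismiss. Saying that $\{(j,m,k) : (i,am,k)\,\rho\,0\}$ is ``controlled by the finite set $B$'' is not a justification: finiteness of $B$ gives finitely many generators of the form $\bigl((j',d,\bullet),0\bigr)$, but for each such $(j',d)\in B$ the set of $m\in M$ with $x_iam\ \tau_A\ x_{j'}dx$ for some $x\in M$ is a priori an arbitrary right ideal of $M$, and nothing in your argument produces finitely many pairs of $\ann(\tuple{a}\rho)$ collapsing all the corresponding triples to $0$. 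What is needed here is the \emph{other} half of Theorem~\ref{thm:chasetype}\ref{it:coh3}, namely finite generation of the intersections $\bigl((x_ia)\tau_A\bigr)M\cap\bigl((x_{j'}d)\tau_A\bigr)M$ for $(j',d)\in B$: writing each generator of such an intersection as $(x_iah_p)\tau_A$, one adjoins the finitely many pairs $\bigl((j,h_p,j),0\bigr)$, and an arbitrary $m$ with $(i,am,\bullet)\,\rho\,0$ is then handled by first using the annihilator generators (your set $D$) to replace $m$ by some $h_py$, and then killing $(j,h_p,j)(j,y,j)$ with the new pairs. Your proposal invokes only the annihilator condition of coherency of $M$ and never the intersection condition, so this step cannot be completed as written.

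A secondary error: the right ideal $\{u\in S : (i,a,j)u=0\}$ is \emph{not} finitely generated when $I$ is infinite --- it equals $\{0\}\cup\bigcup_{k\neq j}\bigl(\{k\}\times M\times I\bigr)$, a union of infinitely many pairwise incomparable principal right ideals (note also that the product vanishes according to the \emph{first} coordinate of $u$, not the third). This does not sink the approach, because what you actually need is a statement about the congruence $\ann(\tuple{a}\rho)$ rather than about that right ideal: the single generating pair $\bigl(1,(j,1_M,j)\bigr)$ already forces every triple with first coordinate $\neq j$ to be identified with $0$, since $(k,r,\ell)=1(k,r,\ell)$ is sent to $(j,1_M,j)(k,r,\ell)=0$. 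But the justification you give must be replaced by this congruence-level argument.
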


\begin{proof} 
{Note that} $\ann(1\rho)=\rho$ and $\ann(0\rho)=S\times S$, which are both finitely generated. 
So we assume that $\tuple{a}=(u,a,v)$ and
that
$\ann(\tuple{a}\rho)\neq S\times S$; in particular $(\tuple{a},0)\notin \rho$. If there is a $\bigl((i,1_M,i),1\bigr)\in H$ and $u\neq i$ then $\tuple{a}\,\rho\, 0$, a contradiction. Thus, we assume that if we are in case \ref{it:ct3}, then $u=i$.

To construct a finite set of generators of $\ann(\tuple{a}\rho)$, first choose a finite set $T$ of generators of $\ann(a)$ and let
\[R_1=\bigl\{ \bigl((v,x,v),(v,y,v)\bigr)\::\: (x,y)\in T\bigr\} \cup \bigl\{ \bigl(1,(v,1_M,v)\bigr)\bigr\};\]
clearly, $R_1\subseteq \ann(\tuple{a}\rho)$. 

If $u$ is in the support of $A$, then,  by Theorem \ref{thm:chasetype}, there is a finite set of generators $U$ of
$\ann((x_ua)\tau_A)$. For any $(s,t)\in U$ we have $x_uas\,\tau_A\, x_uat$, and it follows from
Lemma~\ref{lem:passtofree} that $R_2\subseteq \ann(\tuple{a}\rho)$, where
\[R_2=\bigl\{ \bigl((v,s,v),(v,t,v)\bigr)\::\:(s,t)\in U\bigr\}.\]

Still supposing that $u$ is in the support of $A$, let us consider a pair $\alpha=(j,d)\in B$. 
If $(x_ua)\tau_AM\cap (x_jd)\tau_AM\neq\emptyset$ (which can only happen if $j$ is also in the support of $A$), then, again by Theorem \ref{thm:chasetype}, we have
that 
\[(x_ua)\tau_AM\cap (x_jd)\tau_AM=\bigcup_{p\in N(\alpha)}(x_{\alpha,p}w_{\alpha,p})\tau_AM\]
for some finite set $N(\alpha)$. For any $p\in N(\alpha)$, there are elements
$h_{\alpha,p}, k_{\alpha,p}\in M$ such that
\[x_uah_{\alpha,p}\,\tau_A\, x_{\alpha,p}w_{\alpha,p}\,\tau_A\, x_jdk_{\alpha,p}.\]
By Lemma~\ref{lem:passtofree} we have that
\[(u,ah_{\alpha,p},\bullet)\,\rho_A\,  \bigl(j,dk_{\alpha,p},\bullet\bigr)\]
whence
\[(u,a,v)(v,h_{\alpha,p},\bullet)=(u,ah_{\alpha,p},\bullet)\,\rho\,  (j,d,\bullet)(\bullet,k_{\alpha,p},\bullet)\,\rho\, 0\]
and so
\[\bigl((v,h_{\alpha,p},v),0\bigr)\in \ann(\tuple{a}\rho).\]
Thus, letting
\[R_3=
\bigl\{\bigl((v,h_{\alpha,p},v),0\bigr)\::\: \alpha\in B, p\in N(\alpha)\bigr\},\]
we have that $R_3\subseteq \ann(\tuple{a}\rho)$.

We now show that $R=R_1\cup R_2\cup R_3$ generates $\ann(\tuple{a}\rho)$. 
We let $\nu=\langle R\rangle$, the right congruence of $S$ generated by $R$.

We have already observed that $R_1\cup R_2\cup R_3\subseteq \ann(\tuple{a}\rho)$, and so 
$ \nu\subseteq \ann(\tuple{a}\rho)$. 
For the reverse inclusion
suppose $(\tuple{r},\tuple{s})\in \ann(\tuple{a}\rho)$;  we aim to show that $(\tuple{r},\tuple{s})\in\nu$. 

First consider the case where $\tuple{a}\tuple{r}=\tuple{a}\tuple{s}=0$, recalling $\tuple{a}=(u,a,v)$.
Then we must have either $\tuple{r}=0$, or else $\tuple{r}=(k,r,\ell)$ with $v\neq k$.
In the latter case we have
\[
(k,r,\ell)=1(k,r,\ell)\,\nu\, (v,1_M,v)(k,r,\ell)=0.
\]
Hence  $\tuple{r}\, \nu\, 0$;  by symmetry $\tuple{s}\, \nu\, 0$, and therefore, by transitivity, 
$(\tuple{r},\tuple{s})\in\nu$.

Next consider $\tuple{a}\tuple{r}=\tuple{a}\tuple{s}\neq 0$.
If both $\tuple{r}$ and $\tuple{s}$ are triples, say 
$\tuple{r}=(k,r,\ell)$, $\tuple{s}=(h,s,m)$, then $v=k=h, \ell=m$ and 
using $R_1$ we have that $(\tuple{r},\tuple{s})\in\nu$. 
If only one of them is a triple, say $\tuple{r}=(k,r,\ell)$ and $\tuple{s}=1$, then from
$(u,a,v)(k,r,\ell)=(u,a,v)1$, we have  $k=\ell=v$, and as $(u,a,v)(v,1_M,v)=(u,a,v)(v,r,v)$, we have that 
$1\,\nu\, (v,1_M,v)\, \nu\, (v,r,v)=\tuple{r}$, i.e. again $(\tuple{r},\tuple{s})\in\nu$.
Finally if $\tuple{r}=\tuple{s}=1$ then it is obvious that
$(\tuple{r},\tuple{s})\in\nu$.

Finally, consider the case where $\tuple{a}\tuple{r}\neq \tuple{a}\tuple{s}$.
Since $(\tuple{a}\tuple{r}, \tuple{a}\tuple{s})\in \rho$ there  must be an $H$-sequence
\begin{equation}
\label{eq:annHseq}
\tuple{a}\tuple{r}=(u,d,\ell)=\tuple{c}_1\tuple{t}_1, \tuple{d}_1\tuple{t}_1=\tuple{c}_2\tuple{t}_2,\hdots ,\tuple{d}_n\tuple{t}_n=(u,e,m)=\tuple{a}\tuple{s},
\end{equation}
where $(\tuple{c}_j,\tuple{d}_j)\in \overline{H}$, $\tuple{t}_j\in S$ for $1\leq j\leq n\in\mathbb{N}$. 

Let us first assume that $0$ does not occur in the sequence \eqref{eq:annHseq}. 
Immediately, we deduce that $u\in \supp A$. 
If we are in case \ref{it:ct3}, where a generator $\bigl( (i,1_M,i),1\bigr)$ lies in $H$, then we note that $\supp A=\{ i\}$ and $u=i$. Then by multiplying every element of the above sequence {\em on the left} by $(i,1_M,i)$, we may obtain a (possibly shorter) sequence connecting $\tuple{a}\tuple{r}$ and
$\tuple{a}\tuple{s}$, in which $0$ does not appear and the generating pairs from $\overline{H}$ occurring are all of the form  
$\bigl((i,b,\bullet),(i,c,\bullet)\bigr)\in H$, where $(i,b,i,c)\in A$. Thus, we can assume, in both cases \ref{it:ct2} and \ref{it:ct3}, that $(\tuple{c}_j,\tuple{d}_j)\in \overline{H}_A$. It is then easy to see that the third {coordinate} of every element of the sequence \eqref{eq:annHseq} is equal to $\ell$ (in particular $m=\ell$), so that multiplying {\em on the right} by $(\ell,1_M,\ell)$, we can assume that all the $\tuple{t}_j$s are triples.

As $0$ is not in the sequence \eqref{eq:annHseq}, and as
$(u,a,v)1=(u,a,v)(v,1_M,v)$ and $\bigl(1,(v,1_M,v)\bigr)\in R$, we can assume without loss of generality that 
both $\tuple{r}$ and $\tuple{s}$ are triples, say $\tuple{r}=(v,r,\ell)$, $\tuple{s}=(v,s,\ell)$. 
From
     $(u,ar, \ell)\,\rho_A\, (u,as,\ell)$ we have $(u,ar, \bullet)\,\rho_A\, (u,as,\bullet)$ by \eqref{eq:bull},
     and then
by Lemma~\ref{lem:passtofree} we have $x_uar\,\tau_A\, x_uas$. Thus
$(r,s)  \in \ann((x_ua)\tau_A) $, which is generated by $U$.
 Therefore it is easy  to see (using the generators in $R_2$) that
$(\tuple{r},\tuple{s})\in\nu$.

Finally, we consider the case where $0$ appears somewhere in the sequence \eqref{eq:annHseq}.
We claim that
\begin{equation}
\label{eq:rnu0}
(\tuple{r},0)\in\nu.
\end{equation}
The assertion is obvious if $\tuple{r}=0$. 
Also $\tuple{r}\neq 1$ because $\ann(\tuple{a}\rho)\neq S\times S$.
We can therefore assume that  $\tuple{r}=(w,r,\ell)$. If $w\neq v$, then 
\[
(w,r,\ell)=1(w,r,\ell)\ \nu\ (v,1_M,v)(w,r,\ell)=0,
\]
proving \eqref{eq:rnu0} in this case. 

Suppose now that $w=v$, and consider the first occurrence of $0$ in \eqref{eq:annHseq}, say
  $\tuple{c}_1\tuple{t}_1, \hdots, \tuple{c}_i\tuple{t}_i\neq 0$ and $\tuple{d}_i\tuple{t}_i=0$. 
  Noting that $\tuple{c}_i\tuple{t}_i\neq 1$, we can assume  as above that 
\[
(u,a,v)(v,r,\ell)\,\rho_A\, \tuple{c}_i\tuple{t}_i.
\]
Since $\tuple{d}_i\tuple{t}_i=0$, we are forced to have
\[
\tuple{c}_i=(j,d,\bullet),\ \tuple{d}_i=0\quad 
\text{for some } \alpha=(j,d)\in B.
\]
 We can therefore take $\tuple{t}_i=(\bullet, x,\ell)$ and notice that
\[(u,ar,\bullet)\,\rho_A\, (j,dx,\bullet),\]
whence $x_uar\,\tau_A\, x_jdx$. Thus
\[(x_ua)\tau_AM\cap (x_jd)\tau_AM\neq \emptyset\]
so that
\[x_uar\,\tau_A\, x_jdx\,\tau_A\, x_{\alpha,p}w_{\alpha,p}y\]
for some $p\in N(\alpha)$ and $y\in M$. Notice that
\[x_uar\,\tau_A\, x_{\alpha,p}w_{\alpha,p}y\,\tau_A\, x_uah_{\alpha,p}y,\]
so that using $R_2$ we have
\[\bigl((v,r,\ell),(v,h_{\alpha,p}y,\ell)\bigr)\in \nu.\]
Then using $R_3$,
\[\mathbf{r}=(v,r,\ell)\,\nu\, (v,h_{\alpha,p}y,v)=
(v,h_{\alpha,p},v)(v,y,\ell)\,\nu\, 0,\]
completing the proof of \eqref{eq:rnu0}. 

Analogously to \eqref{eq:rnu0} we have $(\tuple{s},0)\in\nu$, and hence $(\tuple{r},\tuple{s})\in \nu$
by transitivity. This completes the proof that $\ann(\tuple{a}\rho)=\nu$, and hence of the lemma.
\end{proof}

The following lemma deals with intersections:

\begin{Lem}
\label{lem:int} 
If $\rho$ is finitely generated then for any  $\tuple{a},\tuple{b}\in S$ we have that 
$(\tuple{a}\rho) S \cap (\tuple{b}\rho) S$ is finitely generated.
\end{Lem}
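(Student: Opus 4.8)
The plan is to follow the same pattern as the proof of Lemma~\ref{lem:rarho}: discard the degenerate configurations by hand, and in the remaining case transport the intersection to the free $M$-act $F_A/\tau_A$ through Lemma~\ref{lem:passtofree}, where right coherency of $M$ (Theorem~\ref{thm:chasetype}) finishes the job. First I would clear the easy cases. If $\rho=S\times S$ then $S/\rho$ is a point. If $(\tuple a\rho)S=\{0\rho\}$ (in particular if $\tuple a=0$) the intersection is $\{0\rho\}$, and if $1\rho\in(\tuple a\rho)S$ (in particular if $\tuple a=1$) then $(\tuple a\rho)S=(1\rho)S=S/\rho$, so the intersection is $(\tuple b\rho)S$, which is monogenic; symmetrically for $\tuple b$. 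So from now on I would assume $\rho\neq S\times S$, fix a generating set $H$ for $\rho$ as in Lemma~\ref{lem:gens} with $A,B$ finite, and take $\tuple a=(u,a,v)$, $\tuple b=(p,b,q)$ with $(\tuple a\rho)S,(\tuple b\rho)S\neq\{0\rho\}$ and $1\rho\notin(\tuple a\rho)S\cup(\tuple b\rho)S$. A direct computation gives $(\tuple a\rho)S=\{0\rho\}\cup\{(u,ax,\ell)\rho : x\in M,\ \ell\in I\}$ and likewise for $\tuple b$, so $0\rho$ always lies in the intersection.

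The structural heart of the argument is the observation that, for triples $\tuple x,\tuple y$ with $\tuple x\,\rho\,\tuple y$ and $\tuple x\rho\notin\{0\rho,1\rho\}$, one has $\tuple x\,\rho_A\,\tuple y$ and $\tuple x,\tuple y$ share their third coordinate. Indeed, an $\overline{H}$-sequence joining $\tuple x$ and $\tuple y$ cannot meet $0$ (else $\tuple x\rho=0\rho$) nor $1$ (else $\tuple x\rho=1\rho$, using that $1\rho$ is a submonoid of $S$ avoiding $0$), hence uses neither a $B$-generator nor, in case~\ref{it:ct3}, the generator $\bigl((i,1_M,i),1\bigr)$; so only pairs from $\overline{H}_A$ occur and the remark following Lemma~\ref{lem:gens} applies. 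Combined with \eqref{eq:bull}, this shows that every non-$0\rho$ element of $(\tuple a\rho)S\cap(\tuple b\rho)S$ has the form $(u,c,\ell)\rho$ with $c\in aM$ and $(u,c,\bullet)\,\rho_A\,(p,d,\bullet)$ for some $d\in bM$, and conversely each such class lies in the intersection. Since $(u,c,\ell)\rho=\bigl((u,c,\bullet)\rho\bigr)(\bullet,1_M,\ell)$, it therefore suffices to produce finitely many admissible classes $(u,c,\bullet)\rho$ which, together with $0\rho$, generate the intersection.

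For this I would split on whether $u$ and $p$ both lie in $\supp A$. If one of them does not, then $(u,c,\bullet)$ is $\rho_A$-related only to itself, so the admissibility condition forces $p=u$ and $d=c$, and the admissible $c$ are exactly those in $aM\cap bM$; this intersection of principal right ideals of $M$ is finitely generated because $M$ is right coherent and the identity congruence on $M$ is finitely generated (apply Theorem~\ref{thm:chasetype}\ref{it:coh2}\ref{it:coh2i}), so, picking generators $c_1,\dots,c_k$ of $aM\cap bM$, the classes $0\rho,(u,c_1,\bullet)\rho,\dots,(u,c_k,\bullet)\rho$ generate the intersection (and if $u\neq p$ the non-$0\rho$ part is empty). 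If instead $u,p\in\supp A$, then Lemma~\ref{lem:passtofree} turns the admissibility of $c=ax$ into the statement that $\bigl((x_ua)x\bigr)\tau_A$ lies in $\bigl((x_ua)\tau_A\bigr)M\cap\bigl((x_pb)\tau_A\bigr)M$; since $A$ is finite, $F_A$ is a finitely generated free $M$-act and $\tau_A$ is a finitely generated congruence on it, so by Theorem~\ref{thm:chasetype}\ref{it:coh3}\ref{it:coh3i} this intersection has a finite generating set, which, choosing representatives from $\bigl((x_ua)\tau_A\bigr)M$, we may write as $(x_uah_j)\tau_A$ with $h_j\in M$, $1\le j\le k$. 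Pulling this back through Lemma~\ref{lem:passtofree} shows that $0\rho,(u,ah_1,\bullet)\rho,\dots,(u,ah_k,\bullet)\rho$ generate $(\tuple a\rho)S\cap(\tuple b\rho)S$.

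I expect the main obstacle to be the structural observation of the second paragraph, since it is the mechanism that makes the passage to $F_A/\tau_A$ faithful, and getting it right depends on carefully excluding the classes of $0$ and $1$ beforehand (hence the reductions to $(\tuple a\rho)S\neq\{0\rho\}$ and $1\rho\notin(\tuple a\rho)S$) and, in case~\ref{it:ct3}, on keeping track of how the distinguished idempotent $i$ interacts with $\supp A$ and with $u,p$. The remaining steps — the explicit description of $(\tuple a\rho)S$, and checking that the listed classes both belong to and generate the intersection — are routine but need to be carried out with some care.
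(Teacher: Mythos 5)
Your proof is correct and follows essentially the same route as the paper's: after clearing the degenerate cases, both arguments transport the intersection to the free $M$-act $F_A/\tau_A$ via Lemma~\ref{lem:passtofree} and invoke right coherency of $M$ there and for $aM\cap bM$, with your explicit description of the $\rho$-classes in $(\tuple{a}\rho)S\cap(\tuple{b}\rho)S$ playing the role of the paper's direct $H$-sequence analysis. The only point to tighten is in your structural observation: in case~\ref{it:ct3} an $\overline{H}$-sequence avoiding $0$ and $1$ may still formally invoke the generator $\bigl((i,1_M,i),1\bigr)$, but only in steps where both sides coincide, so such steps can be deleted and the conclusion $\tuple{x}\,\rho_A\,\tuple{y}$ stands.
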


\begin{proof} 
If $\tuple{a}$ or $\tuple{b}$ is $\rho$-related to $1$ or $0$, or if $\tuple{a}=\tuple{b}$, then 
$(\tuple{a}\rho)S\cap(\tuple{b}\rho)S$ is monogenic. 
Suppose therefore that $\tuple{a}\neq \tuple{b}$ and that both $\tuple{a}$ and $\tuple{b}$ are triples,
say $\tuple{a}= (u,a,v)$, $\tuple{b}=(w,b,z)$.

If $(u,a,v)S\cap (w,b,z)S\neq \{ 0\}$, then $u=w$ and $aM\cap bM\neq\emptyset$. 
Since $M$ is right coherent, $aM\cap bM=\bigcup_{c\in C}cM$ for some finite set
$C$, and then
\[
U=\bigcup_{c\in C}(u,c,u)\rho\subseteq (\tuple{a}\rho)S\cap(\tuple{b}\rho)S.
\]

Further, provided that $u,w\in \supp A$,  
with the $M$-act $F_A/\tau_A$ we consider the
 intersection
$((x_ua)\tau_A)M\cap ((x_wb)\tau_A)M\neq\emptyset$.
Since $M$ is right coherent, this intersection is finitely generated by Theorem \ref{thm:chasetype},
i.e.
there exists a finite set $D\subseteq M$ such that
\begin{equation}
\label{eq:intgen}
((x_ua)\tau_A)M\cap ((x_wb)\tau_A)M=\bigcup_{d\in D}((x_uad)\tau_A)M.
\end{equation} 
If we let
\[
V=\{ (u,ad,\bullet)\rho\::\: d\in D\}
\]
then it follows from \eqref{eq:intgen} and Lemma~\ref{lem:passtofree} that
\[V\subseteq (\tuple{a}\rho)S\cap(\tuple{b}\rho)S.\]

Let $W$ be the subact of $S/\rho$ generated by  $U\cup V$; we claim that  $W=(\tuple{a}\rho)S\cap(\tuple{b}\rho)S$. 
By the preceding discussion, we  have $W\subseteq (\tuple{a}\rho)S\cap(\tuple{b}\rho)S$.
For the reverse inclusion, first note that
 $0\rho\in (\tuple{a}\rho)S\cap(\tuple{b}\rho)S$ always, and if 
$ (\tuple{a}\rho)S\cap(\tuple{b}\rho)S=\{0\rho\}$ there is nothing to prove.
So suppose now that there exist $\tuple{h},\tuple{g}\in S$ such that $(\tuple{a}\tuple{h})\rho=(\tuple{b}\tuple{g})\rho\neq 0\rho$;
we want to show that $(\tuple{a}\tuple{h})\rho\in W$.

Now, from $(\tuple{a}\tuple{h},\tuple{b}\tuple{g})\in\rho$ we have that either
$\tuple{a}\tuple{h}=\tuple{b}\tuple{g}\neq 0$ or else there exists an $H$-sequence from one to the other.
In the former case $u=w$, $aM\cap bM\neq \emptyset$, and
$(\tuple{a}\tuple{h})\rho\in US$.

Now suppose that there exists an $H$-sequence
\begin{equation}
\label{eq:ahbg}
(u,a,v)\tuple{h}=\tuple{c}_1\tuple{t}_1, \tuple{d}_1\tuple{t}_1=\tuple{c}_2\tuple{t}_2,\hdots ,\tuple{d}_n\tuple{t}_n=(w,b,z)\tuple{g},
\end{equation}
where $(\tuple{c}_i,\tuple{d}_i)\in \overline{H}$, $\tuple{t}_i\in S$ for $1\leq i\leq n\in\mathbb{N}$.
Since $(\tuple{a}\tuple{h})\rho\neq 0\rho$, no element in the sequence equals $0$. 
We can assume as in Lemma~\ref{lem:rarho} that every pair 
$(\tuple{c}_i,\tuple{d}_i)\in \overline{H}_A$ and every $\tuple{t}_i$ is a triple. 
{Note that} $u,w\in \supp A$. 
We know that $\tuple{h},\tuple{g}\neq 0$.
If either of them were $1$, 
then 
$(\tuple{a}\rho)S\cap(\tuple{b}\rho)S$ would be principal, and our job would be done. 
We can therefore assume that $\tuple{h}=(v,h,y)$, $\tuple{g}=(z,g,y)$. 
Then the sequence \eqref{eq:ahbg} implies
\[
(u,a,v)(v,h,y)\,\rho_A\, (w,b,z)(z,g,y)
\]
giving that
\[
(u,ah,\bullet)\,\rho_A\, (w,bg,\bullet)
\]
using \eqref{eq:bull},
and consequently $x_uah\,\tau_A\, x_{w}bg$ by Lemma \ref{lem:passtofree}. 
Therefore 
\[
(x_uah)\tau_A=(x_{w}bg)\tau_A\in (x_ua)\tau_AM\cap (x_{w}b)\tau_AM.
\]
By the definition of the set $D$ it follows that there exists $d\in D$ and $m\in M$ such that
$(x_uah)\tau_A=(x_uadm)\tau_A$.
By Lemma~\ref{lem:passtofree} and \eqref{eq:bull} we now have
$(u,ah,y)\,\rho \, (u,adm,y)$, and hence
\[
(\tuple{a}\tuple{h})\rho =(u,ah,y)\rho=\bigl((u,ad,\bullet)\rho\bigr)(\bullet,m,y)\in VS.
\]
This completes the proof of $(\tuple{a}\tuple{h})\rho\in W$,
and of the entire lemma.
\end{proof}

\begin{proof}[Proof of Theorem \ref{thm:brandt}]
The theorem follows immediately from Lemmas \ref{lem:rarho} and \ref{lem:int}, using Theorem
\ref{thm:chasetype}.
\end{proof}

As a consequence we can prove that right coherency is preserved by adjoining or removing a zero element from a monoid, paralleling Proposition \ref{cor:1} for identity elements.

\begin{Cor} 
\label{cor:0}
A monoid $M$ is right coherent if and only if $M^{\underline{0}}$ is right coherent.
\end{Cor}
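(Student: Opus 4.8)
The plan is to deduce this from the results already proved for Brandt monoids, by exploiting the fact that a Brandt monoid over $M$ on a one-element index set is, up to adjoining an identity, just $M^{\underline{0}}$. Concretely, take $I=\{1\}$. Then $\mathcal{B}(M;\{1\})$ consists of the triples $(1,m,1)$, $m\in M$, together with $0$; it is itself a monoid, with identity $(1,1_M,1)$ and zero $0$, and the map $(1,m,1)\mapsto m$, $0\mapsto\underline{0}$ is an isomorphism onto $M^{\underline{0}}$. Hence $\mathcal{B}(M;\{1\})^1\cong\bigl(M^{\underline{0}}\bigr)^{\underline{1}}$, and the problem reduces to combining Theorem \ref{thm:brandt}, Corollary \ref{cor:btdown} and Proposition \ref{cor:1}.

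For the forward implication, assume $M$ is right coherent. By Theorem \ref{thm:brandt}, applied with the singleton index set, $\mathcal{B}(M;\{1\})^1$ is right coherent, so by the isomorphism above $\bigl(M^{\underline{0}}\bigr)^{\underline{1}}$ is right coherent. Applying Proposition \ref{cor:1} with $M^{\underline{0}}$ playing the role of $M$, we conclude that $M^{\underline{0}}$ is right coherent.

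For the converse, assume $M^{\underline{0}}$ is right coherent. Proposition \ref{cor:1} (again with $M^{\underline{0}}$ in place of $M$) gives that $\bigl(M^{\underline{0}}\bigr)^{\underline{1}}\cong\mathcal{B}(M;\{1\})^1$ is right coherent, whence $M$ is right coherent by Corollary \ref{cor:btdown}. The argument presents no real obstacle; the only points needing care are the identification $\mathcal{B}(M;\{1\})^1\cong\bigl(M^{\underline{0}}\bigr)^{\underline{1}}$ and invoking Proposition \ref{cor:1} for the monoid $M^{\underline{0}}$ rather than $M$. A direct proof mirroring that of Proposition \ref{cor:1}, working with right congruences on $M^{\underline{0}}$ and their restrictions to $M$, is also possible but considerably longer.
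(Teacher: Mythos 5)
Your proof is correct, and it splits into two halves relative to the paper. The forward direction is exactly the paper's argument: identify $(M^{\underline{0}})^{\underline{1}}$ with the $1\times 1$ Brandt monoid $\mathcal{B}(M;\{1\})^1$, apply Theorem \ref{thm:brandt}, and then strip the adjoined identity using Proposition \ref{cor:1}. For the converse, however, you take a genuinely different route. The paper argues directly: given a finitely generated right congruence $\rho$ on $M$, it forms $\rho^0=\rho\cup\{(0,0)\}$ on $M^{\underline{0}}$ and verifies by hand that finite generating sets for the annihilators and intersections in $M^{\underline{0}}$ restrict to finite generating sets over $M$ (using that $M^{\underline{0}}$ has no zero divisors). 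You instead invoke the ``only if'' half of Proposition \ref{cor:1} to pass from $M^{\underline{0}}$ up to $(M^{\underline{0}})^{\underline{1}}\cong\mathcal{B}(M;\{1\})^1$, and then apply the downward result Corollary \ref{cor:btdown}. This is legitimate: Corollary \ref{cor:btdown} is proved for arbitrary non-empty $I$, and its hypotheses (via Proposition \ref{prop:thetechresult}) are easily checked for a singleton index set; there is no circularity, since neither Proposition \ref{cor:1} nor Corollary \ref{cor:btdown} depends on Corollary \ref{cor:0}. What your version buys is brevity and uniformity -- both implications become formal consequences of the Brandt machinery -- at the cost of routing an elementary fact through the rather heavy Proposition \ref{prop:thetechresult}; the paper's direct converse is self-contained and makes transparent exactly which structural feature of $M^{\underline{0}}$ (absence of zero divisors) is being used.
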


\begin{proof} 
($\Rightarrow$)
Note that $(M^{\underline{0}})^{\underline{1}}$ is isomorphic to the $1\times 1$ Brandt monoid  $\mathcal{B}(M;\{1\})$.
Hence if $M$ is right coherent then so is $(M^{\underline{0}})^{\underline{1}}$
 by Theorem \ref{thm:brandt}. But then it follows that $M^{\underline{0}}$ is right coherent by Proposition \ref{cor:1}.

($\Leftarrow$)
Suppose that $M^{\underline{0}}$ is right coherent, and let $\rho$ be a finitely generated right congruence on $M$. Then 
$\rho^0=\rho\cup \{ (0,0)\}$
is a finitely generated right congruence on $M^{\underline{0}}$. Let $a\in M$. It is easy to see that
$\ann(a\rho^0)=\ann(a\rho)\cup \{ (0,0)\}$. Let $H\subseteq M\times M$ be a finite set such that
$H\cup  \{ (0,0)\}$ generates $\ann(a\rho^0)$. Since $M^{\underline{0}}$ has no zero divisors, a standard argument gives that   $H$ generates
$\ann(a\rho)$.

Now let $a,b\in M$ and consider $(a\rho^0)M^{\underline{0}}\cap (b\rho^0)M^{\underline{0}}$. For any $u,v\in M^{\underline{0}}$ we have $au\,\rho^0\, bv$ if and only if $au=bv=0$ or $au,bv\in M$ and $au\,\rho\, bv$.
By assumption,
$(a\rho^0)M^{\underline{0}}\cap (b\rho^0)M^{\underline{0}}$ is finitely generated, say by $\{x\rho^0:x\in X\} \cup \{ 0\rho^0\}$, where $X$ is a finite subset of $M$. It is then easy to show, bearing in mind that $0\rho^0=\{ 0\}$,  that $\{ x\rho:x\in X\}$ generates $(a\rho)M\cap (b\rho)M$.
\end{proof}

Unfortunately, we do not know whether the analogues of the above result hold for arbitrary Rees matrix semigroups with or without zero, or for Bruck--Reilly extensions.

\begin{qn} 
If $M$ is right coherent monoid, are the following necessarily right coherent:
(a) an arbitrary Rees matrix monoid $\mathcal{M}(M; I,\Lambda;P)^1$; 
(b) an arbitrary Rees matrix monoid with zero $\mathcal{M}^0(M; I,\Lambda;P)^1$;
(c) an arbitrary Bruck--Reilly {extension} $\BR(M,\theta)$?
\end{qn}

\section{Direct products}\label{sec:dp}

In this section we initiate the investigation into when the direct product of two monoids is right coherent.
We begin with the following easy observation:

\begin{Prop}
If the direct product $S\times T$ of two monoids is right coherent then so are both $S$ and $T$.
\end{Prop}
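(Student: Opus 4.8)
The statement is that if $S\times T$ is right coherent, then both $S$ and $T$ are right coherent. By symmetry it suffices to prove that $S$ is right coherent, and the natural approach is to exhibit $S$ as a retract of $S\times T$ and then invoke Proposition~\ref{prop:retracts}. First I would define $\theta\colon S\times T\to S\times T$ by $(s,t)\theta=(s,1_T)$, where $1_T$ is the identity of $T$. This is clearly a monoid homomorphism (hence in particular an $(S\times T)$-morphism when $S\times T$ is viewed as a right act over itself), its image is $S\times\{1_T\}$, and its restriction to $S\times\{1_T\}$ is the identity map; so $S\times\{1_T\}$ is a retract of $S\times T$. Since $S\times\{1_T\}\cong S$ as monoids, Proposition~\ref{prop:retracts} gives that $S$ is right coherent. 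Running the symmetric argument with $(s,t)\mapsto(1_S,t)$ shows $T$ is right coherent.

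I expect essentially no obstacle here: the only thing to check carefully is that $\theta$ really is a morphism of right $(S\times T)$-acts, i.e. that $\bigl((s,t)(s',t')\bigr)\theta=\bigl((s,t)\theta\bigr)(s',t')$, which is the routine identity $(ss',1_T)=(s,1_T)(s',t')=(ss',t')$ — wait, this needs the action to be the one making the retract work, so one should be slightly careful and phrase it as: $\theta$ is a monoid endomorphism, every monoid endomorphism whose image is a submonoid $N$ and which fixes $N$ pointwise is a retraction in the sense of the paper, and Proposition~\ref{prop:retracts} applies to retracts of right coherent monoids. There is no hard part; the proposition is included mainly to set up the contrast with the subsequent example showing direct products do \emph{not} preserve coherency in general.
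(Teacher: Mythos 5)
Your proposal is correct and follows exactly the paper's own argument: the projection $(s,t)\mapsto(s,1_T)$ is a monoid retraction of $S\times T$ onto $S\times\{1_T\}\cong S$, and Proposition~\ref{prop:retracts} does the rest. Your momentary worry about act morphisms is unnecessary, since a retraction in the sense of the paper is just a monoid endomorphism fixing its image pointwise, which is what you verify.
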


\begin{proof}
The mapping $S\times T\rightarrow S\times\{1_T\}\cong S$, $(s,t)\mapsto (s,1_T)$ is a retraction, and hence, by Proposition~\ref{prop:retracts}, $S$ is right coherent. The proof for $T$ is analogous. 
\end{proof}

Next we observe that the converse does not hold: it is possible for the direct product of two right coherent monoids not to be right coherent.

\begin{Ex}\label{ex:direct}
Let $F$ be the free monoid generated by $\{a,x,b\}$.
By \cite{FMonoid}, $F$ is right coherent, and we claim that $F\times F$ is not right coherent.

To this end, let $\rho$ be the right congruence on $F\times F$ generated by
\[
H=\bigl\{ \bigl((ax,1),(a,x)\bigr), \bigl((ab,1),(aa,1)\bigr)\bigr\}.
\]
We show that the annihilator $\nu=\ann\bigl( (a,b)\rho\bigr)$ is not finitely generated.
To see this note that for every $n\geq 1$ we have
\begin{equation}
\label{eq:axnb}
(ax^n,b) \mathrel{\rho} (ax^{n-1},xb) \mathrel{\rho} \ldots \mathrel{\rho} (a,x^nb).
\end{equation}
What is more, the elements appearing in \eqref{eq:axnb} form a complete $\rho$-class, 
because no further rules from $H$ can be applied to any of them.
It follows that $(x^n,1)\nu=\{(x^n,1)\}$ for all $n\geq 1$, and it is easy to see this is also true for $n=0$.

In a similar fashion, we have
\[
(ax^nb,b) \mathrel{\rho} (ax^{n-1}b,xb) \mathrel{\rho} \ldots \mathrel{\rho} (ab,x^nb) \mathrel{\rho} (aa,x^nb) \mathrel{\rho} (axa,x^{n-1}b) \mathrel{\rho} \ldots \mathrel{\rho} (ax^na,b),
\]
and it follows that $(x^nb,1)\, \nu\, (x^na,1)$.

Now, let $K$ be an arbitrary finite set of pairs from $F \times F$, and let $n$ be a natural number greater than the length of any words appearing in any component of any element of $K$.
It follows that no element of $K$ has $x^na$ as a component. On the other hand, if $(x^na,1)$ is written as
$(u,v)(w,z)$, with $(u,v)$ a component of a pair in $K$,  then $(u,v)=(x^i,1)$ for some $i\geq 0$. It follows from the discussion above that $(u,v)$ is not $\nu$-related to any element of $F\times F$ other than itself and so there does not exist a  
$K$-sequence starting from $(x^na,1)$ and ending in $(x^nb,1)$. Therefore $K$ cannot generate $\nu$,  and we deduce that
$\nu$ is not finitely generated. Hence  $F\times F$ is not right coherent.
\end{Ex}

In Example~\ref{ex:direct}, certainly $F\times F$ is a submonoid of $G\times G$, where $G$ is the free group on $\{ a,b,x\}$. {By \cite{gould:1992}, groups} are coherent.

\begin{Cor}\label{cor:submonoids} The class of right coherent monoids is not closed under submonoids.
\end{Cor}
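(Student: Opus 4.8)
The final statement is Corollary~\ref{cor:submonoids}: the class of right coherent monoids is not closed under submonoids. The plan is essentially to package Example~\ref{ex:direct} together with the remark immediately preceding the corollary, so the ``proof'' here is a short assembly of already-established facts rather than a new argument.

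First I would recall the setup from Example~\ref{ex:direct}: take $F$ to be the free monoid on $\{a,x,b\}$, which is right coherent by \cite{FMonoid}, and note that Example~\ref{ex:direct} exhibits $F\times F$ as a monoid that is \emph{not} right coherent. Then I would invoke the remark: $F\times F$ embeds as a submonoid of $G\times G$, where $G$ is the free group on $\{a,x,b\}$ (since $F$ sits inside $G$ and direct products of embeddings are embeddings). Finally I would use the fact, from \cite{gould:1992}, that every group is coherent, hence so is the direct product $G\times G$ of two groups --- wait, here one must be slightly careful, since Example~\ref{ex:direct} shows direct products need not preserve coherency. The point that saves us is that for \emph{groups} the situation is different: a group is right coherent because its finitely generated right congruences correspond to finitely generated subgroups and the relevant annihilators/intersections behave well; more simply, $G\times G$ is again a group, and groups are coherent by \cite{gould:1992}. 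So $G\times G$ is a right coherent monoid containing the non--right-coherent submonoid $F\times F$, which is exactly the desired conclusion.

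The only genuine point requiring a line of justification is that $G\times G$ is right coherent: this is immediate because $G\times G$ is itself a group (the direct product of two groups), and all groups are coherent by \cite{gould:1992}. Everything else --- right coherency of $F$, non--right-coherency of $F\times F$, and the embedding $F\times F\hookrightarrow G\times G$ --- is already in hand from Example~\ref{ex:direct} and the sentence preceding the corollary. There is no real obstacle; the ``hard work'' was done in Example~\ref{ex:direct} in constructing the right congruence $\rho$ and showing $\ann((a,b)\rho)$ is not finitely generated. Thus the proof is simply: $F\times F$ is a submonoid of the group $G\times G$, the latter is coherent (being a group, by \cite{gould:1992}), while the former is not right coherent (Example~\ref{ex:direct}); hence right coherency is not inherited by submonoids. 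I would write this as a two- or three-sentence proof.

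\begin{proof}
By Example~\ref{ex:direct}, the monoid $F\times F$ is not right coherent, where $F$ is the free monoid on $\{a,x,b\}$. On the other hand, as noted above, $F\times F$ is a submonoid of $G\times G$, where $G$ is the free group on $\{a,x,b\}$. Since $G\times G$ is a group, it is coherent by \cite{gould:1992}. Thus $G\times G$ is a right coherent monoid with a submonoid, namely $F\times F$, that is not right coherent.
\end{proof}
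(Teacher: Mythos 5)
Your proposal is correct and is precisely the argument the paper intends: $F\times F$ is not right coherent by Example~\ref{ex:direct}, it embeds in $G\times G$ for $G$ the free group on $\{a,x,b\}$, and $G\times G$ is itself a group, hence coherent by \cite{gould:1992}. Nothing further is needed.
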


By way of contrast to Example~\ref{ex:direct}, direct products with finite monoids do preserve right coherency:

\begin{Thm}
\label{thm:SxTfin}
If $S$ is a right coherent monoid and $T$ is a finite monoid then $S\times T$ is right coherent.
\end{Thm}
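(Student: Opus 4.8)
The plan is to use the finiteness of $T$ to view $S\times T$ as a \emph{finitely generated free right $S$-act}: identifying $S$ with the submonoid $S\times\{1_T\}$ of $S\times T$, the right $S$-action $(s,t)\cdot s'=(ss',t)$ turns $S\times T$ into $F_S(T)$, which is free of finite rank since $|T|<\infty$. The key lemma is that \emph{every finitely generated right $(S\times T)$-congruence $\rho$ on $S\times T$ is also finitely generated as a right $S$-congruence on $F_S(T)$.} Indeed, if $\rho=\langle H\rangle_{S\times T}$ with $H$ finite, put $H'=\bigl\{\bigl(c(1_S,t),\,d(1_S,t)\bigr):(c,d)\in H,\ t\in T\bigr\}$, which is finite precisely because $T$ is. Writing any multiplier $(s,t)\in S\times T$ as $(1_S,t)(s,1_T)$ and treating $(s,1_T)$ as an element of $S$ converts any $\overline H$-sequence over $S\times T$ into an $\overline{H'}$-sequence over $S$ with the same endpoints, so $\rho\subseteq\langle H'\rangle_S$; conversely $H'\subseteq\rho$ and $\rho$ is visibly an $S$-congruence, so $\langle H'\rangle_S\subseteq\rho$. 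Hence $(S\times T)/\rho\cong F_S(T)/\langle H'\rangle_S$ is a finitely presented right $S$-act, and since $S$ is right coherent we may apply Theorem~\ref{thm:chasetype}\ref{it:coh3} to it. It then remains to verify conditions \ref{it:coh2}\ref{it:coh2i} and \ref{it:coh2}\ref{it:coh2ii} of Theorem~\ref{thm:chasetype} for $S\times T$.

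For \ref{it:coh2}\ref{it:coh2i}, fix $a=(s_0,t_0)$, $b=(s_1,t_1)\in S\times T$ and put $a_t=(s_0,t_0t)$, $b_t=(s_1,t_1t)$ for $t\in T$. Since $(a\rho)(s,t)=(a_t\rho)s$ for the $S$-action, as subsets of $(S\times T)/\rho$ we get $(a\rho)(S\times T)=\bigcup_{t\in T}(a_t\rho)S$, and similarly for $b$, whence
\[
(a\rho)(S\times T)\cap(b\rho)(S\times T)=\bigcup_{t,t'\in T}\bigl((a_t\rho)S\cap(b_{t'}\rho)S\bigr).
\]
Each of the finitely many sets on the right is a finitely generated right $S$-subact of $(S\times T)/\rho$ by Theorem~\ref{thm:chasetype}\ref{it:coh3}\ref{it:coh3i}; a finite union of finitely generated $S$-subacts is again finitely generated over $S$, and any $S$-generating set of an $(S\times T)$-subact also generates it over $S\times T$. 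So the intersection is finitely generated.

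For \ref{it:coh2}\ref{it:coh2ii}, note that $\bigl((s,t),(s',t')\bigr)\in\ann(a\rho)$ if and only if $(a_t\rho)s=(a_{t'}\rho)s'$ in $(S\times T)/\rho$. For each $t\in T$ choose a finite generating set $G_t\subseteq S\times S$ of the right $S$-congruence $\ann(a_t\rho)=\{(s,s')\in S\times S:a_ts\ \rho\ a_ts'\}$, which exists by Theorem~\ref{thm:chasetype}\ref{it:coh3}\ref{it:coh3ii}, and put $\widehat G_t=\{\bigl((p,t),(q,t)\bigr):(p,q)\in G_t\}$. For distinct $t,t'$ with $(a_t\rho)S\cap(a_{t'}\rho)S\neq\emptyset$, choose finite generators $c_1,\dots,c_k$ of this intersection together with $p_i,q_i\in S$ satisfying $c_i\rho=(a_t\rho)p_i=(a_{t'}\rho)q_i$, and set $D_{t,t'}=\{\bigl((p_i,t),(q_i,t')\bigr):1\le i\le k\}$. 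Then $Y$, the finite union of all the $\widehat G_t$ ($t\in T$) and all the $D_{t,t'}$ ($t\neq t'$ in $T$), is contained in $\ann(a\rho)$, and I claim $\langle Y\rangle=\ann(a\rho)$. For the nontrivial inclusion, given $\bigl((s,t),(s',t')\bigr)\in\ann(a\rho)$ with $t\neq t'$ (the case $t=t'$ being immediate from $\widehat G_t$), the common value $(a_t\rho)s=(a_{t'}\rho)s'$ lies in $(a_t\rho)S\cap(a_{t'}\rho)S$, hence equals $(c_i\rho)w$ for some $i$ and $w\in S$; this forces $(s,p_iw)\in\ann(a_t\rho)$ and $(s',q_iw)\in\ann(a_{t'}\rho)$, so that
\[
(s,t)\ \langle\widehat G_t\rangle\ (p_iw,t)\ \langle D_{t,t'}\rangle\ (q_iw,t')\ \langle\widehat G_{t'}\rangle\ (s',t'),
\]
the middle step coming from right-multiplying the generator $\bigl((p_i,t),(q_i,t')\bigr)$ by $(w,1_T)$. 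Hence $\ann(a\rho)$ is finitely generated, and by Theorem~\ref{thm:chasetype}, $S\times T$ is right coherent.

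The heart of the argument — and essentially the only place where something other than routine bookkeeping happens — is the reduction lemma in the first paragraph: finite generation of a congruence must survive the passage from the $(S\times T)$-act structure to the $S$-act structure on $S\times T=F_S(T)$, and this is exactly where $|T|<\infty$ enters. After that, coherency of $S$ does all the work, the finiteness of $T$ keeping every union and every enlarged generating set finite; some care is needed in the annihilator step to ensure the chosen generators link up correctly across differing second coordinates.
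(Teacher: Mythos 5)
Your proof is correct. The underlying engine is the same as the paper's -- the identification of $S\times T$ with the finitely generated free right $S$-act $F_S(T)$ and the decomposition $(s,t)=(1_S,t)(s,1_T)$, which is exactly where finiteness of $T$ enters -- but the route you take from there is genuinely different. The paper proves a general transfer result (its Proposition \ref{prop:direct}): for an \emph{arbitrary} right $S\times T$-act $A$, finite generation and finite presentation over $S\times T$ are each equivalent to the corresponding property over $S$. With that in hand, the theorem follows in three lines straight from the definition of right coherency: a finitely generated subact of a finitely presented $S\times T$-act is finitely generated inside a finitely presented $S$-act, hence finitely presented over $S$ by coherency of $S$, hence finitely presented over $S\times T$. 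You instead transfer only the congruence $\rho$ on the single act $S\times T=F_S(T)$ and then verify the Chase-type conditions \ref{it:coh2}\ref{it:coh2i} and \ref{it:coh2}\ref{it:coh2ii} of Theorem~\ref{thm:chasetype} by hand, importing finite generation of the intersections $(a_t\rho)S\cap(b_{t'}\rho)S$ and the annihilators $\ann(a_t\rho)$ from condition \ref{it:coh3} for $S$ and stitching them together with the linking pairs $D_{t,t'}$. What the paper's approach buys is brevity and a reusable act-level statement of independent interest; what yours buys is that you never need the "finitely presented over $S$ implies finitely presented over $S\times T$" direction for arbitrary acts -- you only ever pass from $S\times T$-data to $S$-data -- at the cost of the explicit bookkeeping in the annihilator step, which you carry out correctly.
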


We will prove this by reference to the original definition of coherency. The main work in doing so is contained in the following result, which may be of independent interest. In what follows, we denote the identities of $S$ and $T$ by $1_S$ and $1_T$, respectively.

\begin{Prop} \label{prop:direct}
Let $S$ be a monoid,  let  $T$ be a finite monoid and let $A$ be an $S \times T$-act.
Then $A$ is a right $S$-act with the action  $as=a(s,1_T)$ for all $a \in A$ and $s \in S$.
Further:

\begin{thmenumrom}  
\item \label{it:xi}
$A$ is finitely generated as an $S$-act if and only if it is finitely generated as an $S\times T$-act;
\item \label{it:xii}
$A$ is finitely presented as an $S$-act if and only if it is finitely presented as an $S\times T$-act.
\end{thmenumrom}
\end{Prop}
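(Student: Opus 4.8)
The plan is to dispose of the $S$-act axioms in one line --- $a(1_S,1_T)=a$ and $(as)s'=a\bigl((s,1_T)(s',1_T)\bigr)=a(ss')$ --- and then to reduce both assertions (i) and (ii) to a single combinatorial statement about congruences, which I shall call the \emph{congruence claim}: for a finite monoid $T$, an $S\times T$-act $B$ and an $S\times T$-congruence $\sigma$ on $B$, $\sigma$ is finitely generated as an $S$-congruence if and only if it is finitely generated as an $S\times T$-congruence. For (i), one implication is immediate: if $A=\bigcup_{i=1}^n a_iS$ then, since $a_iS\subseteq a_i(S\times T)$, also $A=\bigcup_{i=1}^n a_i(S\times T)$. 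For the converse, if $A=\bigcup_{i=1}^n a_i(S\times T)$, then for each $i$ and $t\in T$ the set $\{a_i(s,t):s\in S\}$ is precisely the cyclic $S$-subact $\bigl(a_i(1_S,t)\bigr)S$, because $a_i(s,t)=\bigl(a_i(1_S,t)\bigr)(s,1_T)$; as $T$ is finite this exhibits $A=\bigcup_{i,t}\bigl(a_i(1_S,t)\bigr)S$ as finitely generated over $S$. This is one of only two places where finiteness of $T$ is genuinely used.

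For (ii), if $A$ is not finitely generated over $S\times T$ then by (i) it is not finitely generated over $S$ either, and neither side of the equivalence holds; so I may assume $A$ is finitely generated and fix a finite set $X$ with a surjective $S\times T$-morphism $\phi\colon F_{S\times T}(X)\to A$, with kernel $\sigma$. By Proposition~\ref{prop:free}, viewed over $S$ (action by $(s,1_T)$) the act $F_{S\times T}(X)=X\times S\times T$ is the free $S$-act on the finite set $X\times T$, and $\phi$ is then a surjective $S$-morphism onto $A$ with the same kernel $\sigma$. Invoking the standard fact recalled in Section~\ref{sec:prelim} --- that finite presentability does not depend on the choice of finite generating set --- I conclude that $A$ is finitely presented over $S\times T$ if and only if $\sigma$ is finitely generated as an $S\times T$-congruence, and that $A$ is finitely presented over $S$ if and only if $\sigma$ is finitely generated as an $S$-congruence. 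Thus (ii) reduces to the congruence claim.

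To prove the congruence claim: if $\sigma=\langle R\rangle_S$ with $R$ finite, then $\langle R\rangle_S\subseteq\langle R\rangle_{S\times T}\subseteq\sigma$ --- the first inclusion because every $S\times T$-congruence is in particular an $S$-congruence, the second because $\sigma$ is an $S\times T$-congruence containing $R$ --- so all three coincide and $\sigma$ is finitely generated over $S\times T$. Conversely, if $\sigma=\langle R\rangle_{S\times T}$ with $R$ finite, put $R'=\bigl\{\bigl(p(1_S,t),q(1_S,t)\bigr):(p,q)\in R,\ t\in T\bigr\}$, which is finite since $T$ is; then $R'\subseteq\sigma$ gives $\langle R'\rangle_S\subseteq\sigma$, while rewriting each step $c(s,t)=\bigl(c(1_S,t)\bigr)s$ of an $\overline R$-sequence over $S\times T$ turns it into an $\overline{R'}$-sequence over $S$ connecting the same endpoints, giving $\sigma\subseteq\langle R'\rangle_S$; hence $\sigma$ is finitely generated over $S$. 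This is the second (and last) use of finiteness of $T$.

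I expect the mathematics here to be light; the main obstacle is bookkeeping --- keeping the three closure operations ($\langle\cdot\rangle_S$ over $X$, $\langle\cdot\rangle_S$ over $X\times T$, and $\langle\cdot\rangle_{S\times T}$ over $X$) distinct, and being careful that the independence-of-presentation fact is applied only to genuinely finitely generated free acts, together with the routine verification that the $S$-act iso $F_{S\times T}(X)\cong F_S(X\times T)$ carries $\phi$ and $\sigma$ to the corresponding $S$-morphism and $S$-congruence. Once the Proposition is established, Theorem~\ref{thm:SxTfin} follows at once: a finitely generated $S\times T$-subact of a finitely presented $S\times T$-act is, by the Proposition, a finitely generated $S$-subact of a finitely presented $S$-act, hence finitely presented over $S$ by right coherency of $S$, hence finitely presented over $S\times T$ by the Proposition once more.
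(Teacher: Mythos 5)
Your proof is correct, and it reaches the result by a genuinely cleaner route than the paper's. The paper proves part (ii) by constructing explicit presentations in each direction separately: for the forward direction it takes a generating set $U$ of $A$ over $S$ closed under the maps $u\mapsto u(1_S,t)$, builds a finite relation set $H'$ over $S\times T$ containing both the lifted relations and the ``relabelling'' pairs $\bigl(x_u(1_S,t),x_{u(1_S,t)}\bigr)$, and verifies by a sequence-rewriting argument that $H'$ generates the kernel; the reverse direction is a second, independent rewriting argument over the relabelled generator set $X'=\{x_{u(1_S,t)}\}$. You instead make the single structural observation that $F_{S\times T}(X)$, viewed as an $S$-act, is free on the finite set $X\times T$, so that one and the same congruence $\sigma=\ker\phi$ presents $A$ over both monoids; this reduces (ii) to your congruence claim. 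The payoff is that one direction of that claim (finitely generated over $S$ implies finitely generated over $S\times T$) becomes a pure squeeze $\langle R\rangle_S\subseteq\langle R\rangle_{S\times T}\subseteq\sigma$, needing no sequences and not even finiteness of $T$, while the other direction isolates in one place exactly the computation the paper performs twice, namely rewriting $c(s,t)=\bigl(c(1_S,t)\bigr)s$ along an $\overline{R}$-sequence. Your appeal to the independence of finite presentability from the choice of finite generating set is legitimate (it is the same fact the paper cites from Burris--Sankappanavar), and your one-line deduction of Theorem \ref{thm:SxTfin} coincides with the paper's. The paper's version has the minor advantage of exhibiting explicit presentations, but your factorisation through the $S$-freeness of $F_{S\times T}(X)$ and the two-sided congruence lemma is shorter and easier to verify.
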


\begin{proof}
The first statement is straightforward to verify.
\medskip

\ref{it:xi}
($\Rightarrow$)
If $A$ is finitely generated as an $S$-act by a set $X$, then $X$ also generates $A$ as an $S\times T$-act.
\medskip

($\Leftarrow$)
If $A$ is finitely generated as an $S\times T$-act by $X$, then for every $a \in A$ there exist $(s,t) \in S \times T$ and $x \in X$ such that $a=x(s,t)=x(1_S,t)(s,1_T)$, which shows that $A$ is finitely generated as an $S$-act by the set $\{x(1_S,t)\::\: x\in X,\ t \in T\}$.
\medskip

\ref{it:xii} ($\Rightarrow$)
Suppose  that $A$ is finitely presented as an $S$-act. Let $U$ be a finite generating set for   $A$ as an $S$-act such that
$u(1_S,t)\in U$ for all $u\in U$ and $t\in T$. Let  ${X}=\{ x_u\: :\:u\in U\}$. Since finite presentability does not depend on the choice of the generating set, there is an $S$-morphism $\theta:F_S({X}){\rightarrow}A$, extending the map
$x_u\mapsto u$, such that $\ker\theta$ is generated by a finite 
set of  pairs $H\subseteq F_S({X})\times F_S({X})$.

Let $\psi:F_{S\times T}({X})\rightarrow A$ be the $S\times T$-morphism extending the map $x_u\mapsto u$
and let $H'\subseteq F_{S\times T}({X})\times F_{S\times T}({X})$ be defined by
\[
H'=\big\{ \big(x_u(s,1_T),x_v(t,1_T)\big) \::\: (x_us,x_vt)\in H\big\}
 \cup \bigl\{\big(x_{u}(1_S,t),x_{u(1_S,t)}\big): x_u \in X, t \in T \bigr\}.
\]

By construction, $H'\subseteq \ker \psi$; we claim that $H'$ generates $\ker\psi$. 
To this end, suppose that  $(x_u(s,t))\psi=(x_v(h,k))\psi$, so that $u(s,t)=v(h,k)$ in $A$. 
Then $u(1_S,t)(s,1_T)=v(1_S,k) (h,1_T)$ so that, regarding $A$ as an $S$-act, we  have
$\big(u(1_S,t)\big)s=\big(v(1_S,k)\big)h$. It follows that $(x_{u(1_S,t)}s)\theta=(x_{v(1_S,k)}h)\theta$ so that, as
$H$ generates $\ker\theta$, we have an $\overline{H}$-sequence
\[
x_{u(1_S,t)}s = (x_{p_1}c_1)t_1, (x_{q_1}d_1)t_1=(x_{p_2}c_1)t_1,
\ldots, (x_{q_n}d_n)t_n= x_{v(1_S,k)}h
\]
where $(x_{p_i}c_i,x_{q_i}d_i)\in \overline{H}$ and $t_i\in S$, $1\leq i\leq n$.
We therefore have an $\overline{H'}$-sequence
\[x_u(s,t)=x_u(1_S,t)(s,1_T),\, x_{u(1_S,t)}(s,1_T)=
x_{p_1}(c_1,1_T)(t_1,1_T), \,
x_{q_1}(d_1,1_T)(t_1,1_T)=\]\[x_{p_2}(c_2,1_T)(t_2,1_T), 
\hdots, x_{q_n}(d_n,1_T)(t_n,1_T)=x_{v(1_S,k)}(h,1_T),\, x_v(1_S,k)(h,1_T) =x_v(h,k)\] connecting ${x_u}(s,t)$ and $x_v(h,k)$. Thus $\ker\psi$ is generated by $H'$, giving  that  $A$ is 
finitely presented as an $S\times T$-act. 
\medskip

($\Leftarrow$)
Suppose that $A$ is finitely presented as an $S \times T$-act. Let $U$ be a finite generating set 
for $A$ as an $S\times T$-act, let $X=\{ x_u:u\in U\}$ and let
$\theta:F_{S\times T}(X)\rightarrow A$ be the $S\times T$-morphism extending the map
$x_u\mapsto u$. By assumption, $\ker\theta$ is finitely generated, say by
$H\subseteq F_{S\times T}(X)\times F_{S\times T}(X)$, where $H$ is finite.
Let 
\[X'=\{ x_{u(1_S,t)}:u\in U, t\in T\}\]
and put
\[
H'=\bigl\{ \bigl(x_{u(1_S,c't)}c,x_{v(1_S,d't)}d\bigr) \::\:
(x_u(c,c'),x_v(d,d'))\in H, t\in T\bigr\}.\]
Let $\psi:F_S(X')\rightarrow A$ be the $S$-morphism extending the map
$x_{u(1_S,t)}\mapsto u(1_S,t)$.  It is easy to check that $\psi$ is onto and  $H'\subseteq \ker \psi$.

To show that $\ker\psi$ is generated by $H'$, suppose that $\big(x_{u(1_S,t)}s\big)\psi=
\big(x_{v(1_S,k)}h\big)\psi$, so that $u(s,t)=v(h,k)$ in $A$ and hence 
$\big( x_u(s,t)\big)\theta=\big(x_v(h,k)\big)\theta$. Since $\ker\theta$ is
generated by $H$, we have an $\overline{H}$-sequence
\begin{multline*}
x_u(s,t)=x_{p_1}(c_1,c_1')(s_1,t_1), \, x_{q_1}(d_1,d_1')(s_1,t_1)
=x_{p_2}(c_2,c_2')(s_2,t_2),\\
\hdots, x_{q_n}(d_n,d_n')(s_n,t_n)=x_v(h,k),
\end{multline*}
where $\big(x_{p_i}(c_i,c_i'), x_{q_i}(d_i,d_i')\big)\in  \overline{H} $
and $(s_i,t_i)\in S\times T$, for $1\leq i\leq n$. We therefore have an $\overline{H'}$-sequence
\begin{multline*}
x_{u(1_S,t)}s=(x_{p_1(1_S,c_1't_1)}c_1)s_1,\, 
x_{q_1(1_S,d_1't_1)}d_1)s_1=(x_{p_2(1_S,c_2't_2)}c_2)s_2,\\
\hdots, (x_{q_n(1_S,d_n't_n)}d_n)s_n=x_{v(1_S,k)}h,
\end{multline*}
where $(x_{p_i(1_S,c_i't_i)}c_i,
x_{q_i(1_S,d_i't_i)}d_i)\in \overline{H'}$ and $s_i\in S$ for $1\leq i\leq n$, connecting $x_{u(1_S,t)}s$ and $x_{v(1_S,k)}h$. It follows that $\ker\psi$ is finitely generated and hence $A$ is finitely presented as an $S$-act.
\end{proof}

\begin{proof}[Proof of Theorem \ref{thm:SxTfin}]
Let $A$ be a finitely presented $S \times T$-act and $B$ a finitely generated $S\times T$-subact of $A$.
Then, by Theorem \ref{prop:direct}, $A$ is also a finitely presented $S$-act, and $B$ is finitely generated as an $S$-subact of $A$.
Since $S$ is right coherent we have that $B$ is finitely presented as an $S$-act, and then by Theorem \ref{prop:direct} we conclude that $B$ is also finitely presented as an $S \times T$-act, 
concluding that $S \times T$ is indeed right coherent.
\end{proof}

\begin{qn}Let $S$ be a right coherent monoid. For which classes of monoids $\mathcal{C}$ is it the case that
$S\times T$ is right coherent for all $T\in\mathcal{C}$?
\end{qn}

Theorem~\ref{prop:direct} tells us that the class of finite monoids is one example of a class $\mathcal{C}$ as above.

\section*{Acknowledgement} 

The authors would like to thank a careful referee for helpful and constructive comments, which have resulted in an improved version of our article.

\end{document}